\newtheorem{theorem}{Theorem}[section]
\newtheorem{corollary}[theorem]{Corollary}
\newtheorem{proposition}[theorem]{Proposition}
\newtheorem{lemma}[theorem]{Lemma}
\theoremstyle{definition}
\newtheorem{definition}[theorem]{Definition}
\theoremstyle{remark}
\newtheorem{remark}[theorem]{Remark}
\newtheorem{conjecture}[theorem]{Conjecture}
\newcommand{\N}{\mathbb{N}}
\newcommand{\Z}{\mathbb{Z}}
\newcommand{\R}{\mathbb{R}}
\newcommand{\rmC}{\mathrm{C}}
\newcommand{\rmT}{\mathrm{T}}
\newcommand{\bbC}{\mathbb{C}}
\newcommand{\bbQ}{\mathbb{Q}}
\newcommand{\calC}{\mathcal{C}}
\newcommand{\calE}{\mathcal{E}}
\newcommand{\calH}{\mathcal{H}}
\newcommand{\calL}{\mathcal{L}}
\newcommand{\frakg}{\mathfrak{g}}
\newcommand{\frakh}{\mathfrak{h}}
\newcommand{\ad}{\mathrm{ad}}
\newcommand{\id}{\mathrm{id}}
\newcommand{\lev}{\smash{\stackrel{\leftarrow}{\mathrm{ev}}}}
\newcommand{\lcoev}{\smash{\stackrel{\longleftarrow}{\mathrm{coev}}}}
\newcommand{\rev}{\smash{\stackrel{\rightarrow}{\mathrm{ev}}}}
\newcommand{\rcoev}{\smash{\stackrel{\longrightarrow}{\mathrm{coev}}}}
\DeclareMathOperator{\Forall}{\forall}
\DeclareMathOperator{\ttimes}{\widetilde{\times}}
\DeclareMathOperator{\disjun}{\sqcup}
\DeclareMathOperator{\bcs}{\natural}
\newcommand{\leqs}{\leqslant}
\newcommand{\geqs}{\geqslant}
\newcommand{\mods}[1]{\operatorname{\mathnormal{#1}-mod}}
\newcommand{\fsl}{\mathfrak{sl}}
\newcommand{\SL}{\mathrm{SL}}
\newcommand{\Vect}{\mathrm{Vect}}
\newcommand{\op}{\mathrm{op}}
\DeclareRobustCommand{\one}{\mathbin{\text{\includegraphics[height=\heightof{$\mathbf{1}$}]{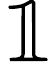}}}}
\newcommand{\sqbinom}[2]{\left[ \begin{matrix} #1 \\ #2 \end{matrix} \right]}
\newcommand{\subalign}[1]{
  \vcenter{
    \Let@ \restore@math@cr \default@tag
    \baselineskip\fontdimen10 \scriptfont\tw@
    \advance\baselineskip\fontdimen12 \scriptfont\tw@
    \lineskip\thr@@\fontdimen8 \scriptfont\thr@@
    \lineskiplimit\lineskip
    \ialign{\hfil$\m@th\scriptstyle##$&$\m@th\scriptstyle{}##$\crcr
      #1\crcr
    }
  }
}
\def\clap#1{\hbox to 0pt{\hss#1\hss}}
\def\mathrlap{\mathpalette\mathrlapinternal}
\def\mathrlapinternal#1#2{%
\rlap{$\mathsurround=0pt#1{#2}$}}
\newcommand{\sig}{n}
\newcommand{\Cob}{\mathrm{Cob}}
\newcommand{\conn}{\mathrm{conn}}
\newcommand{\FRCob}{3\mathrm{Cob}^\sigma}
\newcommand{\RCob}{3\mathrm{Cob}}
\newcommand{\Chb}{\mathrm{Chb}}
\newcommand{\RHB}{4\mathrm{HB}}
\newcommand{\bfCob}{\mathbf{Cob}}
\newcommand{\ETTH}{\mathrm{TTan}^\sigma}
\newcommand{\TTH}{\mathrm{TTan}}
\newcommand{\KTan}{\mathrm{KTan}}
\newcommand{\Algf}{4\mathrm{Alg}}
\newcommand{\tAlgt}{3\mathrm{Alg}^\sigma}
\newcommand{\Algt}{3\mathrm{Alg}}
\newcommand{\HAf}{\calH_4}
\newcommand{\tHAt}{\calH_3^\sigma}
\newcommand{\HAt}{\calH_3}
\newcommand{\KLF}{J_4}
\newcommand{\KLFts}{J_3^\sigma}
\newcommand{\KLFt}{J_3}
\newcommand{\FrgtT}{F_\rmT}
\newcommand{\FrgtC}{F_\rmC}
\newcommand{\tQtnt}{Q^\sigma}
\newcommand{\Qtnt}{Q}
\newcommand{\pic}[2][0]{\raisebox{-0.5\height + 2.5pt + #1pt}{\includegraphics{#2.pdf}}}
\newcommand{\toppic}[1]{\raisebox{-\height}{\includegraphics{#1.pdf}}}
\newcommand\arxiv[2]{\href{https://arXiv.org/abs/#1}{\texttt{arXiv:\allowbreak #1} #2}}
\newcommand\doi[2]{\href{https://doi.org/#1}{#2}}
\DeclareRobustCommand{\myuline}[1]{
 \ifmmode \text{\uline{$\phantom{#1}$}\llap{\contour{white}{$#1$}}}
 \else \uline{\phantom{#1}}\llap{\contour{white}{#1}} \fi
}
\numberwithin{equation}{section}
\begin{document}

\raggedbottom

\title{Kerler--Lyubashenko Functors on 4-Dimensional 2-Handlebodies}

\author[A. Beliakova]{Anna Beliakova} 
\address{Institute of Mathematics, University of Zurich, Winterthurerstrasse 190, CH-8057 Zurich, Switzerland} 
\email{anna.beliakova@math.uzh.ch}

\author[M. De Renzi]{Marco De Renzi} 
\address{Institute of Mathematics, University of Zurich, Winterthurerstrasse 190, CH-8057 Zurich, Switzerland} 
\email{marco.derenzi@math.uzh.ch}

\begin{abstract}
 We construct a braided monoidal functor $\KLF$ from Bobtcheva and Piergallini's category $\RHB$ of connected 4-di\-men\-sion\-al 2-han\-dle\-bod\-ies (up to 2-de\-for\-ma\-tions) to an arbitrary unimodular ribbon category $\calC$, which is not required to be semisimple. The main example of target category is provided by $\mods{H}$, the category of left modules over a unimodular ribbon Hopf algebra $H$. The source category $\RHB$ is freely generated, as a braided monoidal category, by a BPH algebra (short for Bobtcheva--Piergallini Hopf algebra), and this is sent by the Kerler--Lyubashenko functor $\KLF$ to the end $\int_{X \in \calC} X \otimes X^*$ in $\calC$, which is given by the adjoint representation in the case of $\mods{H}$. When $\calC$ is factorizable, we show that the construction only depends on the boundary and signature of handlebodies, and thus projects to a functor $\KLFts$ defined on Kerler's category $\FRCob$ of connected framed 3-di\-men\-sion\-al cobordisms. When $H^*$ is not semisimple and $H$ is not factorizable, our functor $\KLF$ has the potential of detecting diffeomorphisms that are not 2-de\-for\-ma\-tions.
\end{abstract}

\maketitle
\setcounter{tocdepth}{3}

\section{Introduction}

After the failure of the smooth \textit{h}-cobordism theorem in dimension~4, one of the main goals of 4-man\-i\-fold topology has become the detection of exotic smooth structures. Two smooth manifolds form an exotic pair if they are homeomorphic but not diffeomorphic. Many examples have been constructed, see for instance \cite{F82,A91,AP07,AR14} and references therein. Some of the simplest exotic pairs are given by Mazur manifolds \cite{HMP19}, which are contractible 4-man\-i\-folds obtained from the 4-ball $D^4$ by attaching a single 1-han\-dle and a single 2-han\-dle. Mazur manifolds are particularly simple instances of 4-di\-men\-sion\-al 2-han\-dle\-bod\-ies, which are 4-man\-i\-folds obtained from the 4-ball $D^4$ by attaching a finite number of 1-han\-dles and 2-han\-dles. There is another natural equivalence relation on the class of 4-di\-men\-sion\-al 2-han\-dle\-bod\-ies, known as \textit{$2$-e\-quiv\-a\-lence}. It is generated by \textit{$2$-de\-for\-ma\-tions}, which are those diffeomorphisms implemented by handle moves that never step outside of the class of 4-di\-men\-sion\-al 2-han\-dle\-bod\-ies. In other words, when considering 4-di\-men\-sion\-al 2-han\-dle\-bod\-ies up to 2-e\-quiv\-a\-lence, creation and removal of canceling pairs of handles of index 2/3 and 3/4 is forbidden. Whether 2-de\-for\-ma\-tions form a proper subclass of diffeomorphisms is still an open question, mainly due to the lack of sensitive invariants of 4-di\-men\-sion\-al 2-han\-dle\-bod\-ies up to 2-e\-quiv\-a\-lence. This paper delivers new families of computable invariants that could potentially detect  the difference between diffeomorphisms and $2$-de\-for\-ma\-tions.

\subsection{Main results}\label{S:main_results} We construct linear representations of a certain topological category whose morphisms are 2-de\-for\-ma\-tion classes of 4-di\-men\-sion\-al 2-han\-dle\-bod\-ies. More precisely, we define a braided monoidal functor $\KLF$ with source the category $\RHB$ of connected 4-di\-men\-sion\-al 2-han\-dle\-bod\-ies defined by Bobtcheva and Piergallini \cite{BP11}, and recalled in Section~\ref{S:relative_handlebody_category}, and with target a unimodular ribbon category $\calC$. Ribbon categories were introduced by Turaev in \cite{T94}, and have been successfully used for the construction of link and 3-man\-i\-fold invariants. A ribbon category is unimodular if it is finite and the projective cover of its tensor unit is self-dual, see Section~\ref{S:unimodular_factorizable_categories}. Our main result can then be stated as follows.

\begin{theorem}\label{T:main_4-dim}
 If $\calC$ is a unimodular ribbon category, then there exists a braided monoidal functor 
 \[
  \KLF : \RHB \to \calC
 \]
 sending the generating Hopf algebra of $\RHB$ (the solid torus) to the end 
 \[
  \calE = \int_{X \in \calC} X \otimes X^* \in \calC.
 \]
\end{theorem}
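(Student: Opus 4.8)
The plan is to invoke the presentation of $\RHB$ due to Bobtcheva and Piergallini, recalled in Section~\ref{S:relative_handlebody_category}: $\RHB$ is generated, as a braided monoidal category, by a \emph{pre-modular Hopf algebra object}, so that, by its universal property, braided monoidal functors out of $\RHB$ are in bijection with pre-modular Hopf algebra objects in the target. The construction of $\KLF$ thus reduces entirely to exhibiting the end $\calE = \int_{X \in \calC} X \otimes X^*$ as such an object in $\calC$. First I would note that $\calE$ exists since $\calC$ is finite. Then I would equip $\calE$ with the Hopf algebra structure going back to Lyubashenko: product, coproduct, unit, counit and antipode are all specified by means of the universal dinatural family $(j_X)_{X \in \calC}$ together with the braiding and the (co)evaluation morphisms of $\calC$, while the ribbon twist of $\calC$ induces a distinguished ribbon morphism on $\calE$ and the braiding induces a Hopf pairing. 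The one structure not available in an arbitrary ribbon category is a \emph{two-sided} integral $\mathbf{1} \to \calE$ and cointegral $\calE \to \mathbf{1}$; these exist, with the required two-sidedness and normalization, precisely because $\calC$ is unimodular---that is, because the projective cover of the tensor unit is self-dual---as developed in Section~\ref{S:unimodular_factorizable_categories}. Together these data assemble into a candidate pre-modular Hopf algebra object supported on $\calE$.

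It then remains to check the defining relations of $\RHB$ on this candidate, and here I would distinguish two groups. The relations asserting that the images of the generators form a Hopf algebra object with compatible ribbon morphism and Hopf pairing are formal: they hold for $\calE$ in every ribbon category, and verifying them amounts to bookkeeping with the dinaturality of the $j_X$ and the naturality of the braiding and twist. The substantive relations are those translating the handle moves that involve the integral and cointegral---sliding a $2$-handle over a $1$-handle, and cancelling a $2/3$-handle pair in the relative setting---and these are the main obstacle. Concretely, one has to prove that the integral of $\calE$ is invariant under the (co)adjoint structure, that the ribbon morphism rescales it by a fixed invertible scalar, and that composing it with the cointegral produces the expected normalizing endomorphism; these are the categorical analogues of the classical identities for two-sided integrals in unimodular ribbon Hopf algebras, and their proofs use finiteness and unimodularity essentially.

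Once all relations are verified, the universal property of $\RHB$ yields the braided monoidal functor $\KLF : \RHB \to \calC$, and by construction it sends the generating solid torus to $\calE$, which is the assertion of the theorem. As a consistency check, when $\calC = \mods{H}$ for a unimodular ribbon Hopf algebra $H$ the end $\calE$ is the adjoint representation of $H$, and all of the structure morphisms above specialize to the usual product, coproduct, antipode, ribbon element and two-sided integral of $H$, so the abstract verification recovers well-known Hopf-algebraic identities.
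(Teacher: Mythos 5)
Your overall strategy is exactly the paper's: invoke Bobtcheva--Piergallini's presentation of $\RHB$ as the free braided monoidal category $\Algf$ on a pre-modular Hopf algebra, and reduce the theorem to exhibiting the end $\calE$ as a pre-modular Hopf algebra in $\calC$, with the Hopf structure going back to Lyubashenko, the BP-ribbon data induced by the twist and braiding, and the integral/cointegral supplied by unimodularity via Kerler--Lyubashenko. However, your account of which relations must be checked contains two concrete errors. First, there is no relation in $\RHB$ corresponding to cancelling a $2/3$-handle pair: $2$-deformations permit only $1/2$-handle cancellation and handle slides (moves \eqref{E:rel_D1} and \eqref{E:rel_D2}), and the absence of $2/3$-cancellation is precisely what makes $\KLF$ potentially finer than a boundary invariant. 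Second, no axiom of a pre-modular Hopf algebra requires the ribbon element to pair with the integral to an invertible scalar; indeed $\lambda \circ v_+$ fails to be invertible in general (for $u_q \fsl_2$ with $r \equiv 0 \bmod 8$ one has $\lambda(v_+) = 0$, cf.\ Lemma~\ref{L:stabilization}), so if this were one of the defining relations the theorem would be false for such categories. That normalization enters only for the anomaly-free quotient $\Algt$, i.e.\ for descending to $\RCob$. The relations genuinely involving $\lambda$ and $\Lambda$ are left (co)invariance, antipode-invariance, and $\lambda \circ \Lambda = \id_{\one}$, all of which are taken from \cite[Propositions~4.2.4, 4.2.5 \& 4.2.7]{KL01} rather than proved afresh.

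Conversely, you underestimate where the real work lies: the compatibility axioms $(iii)$ of Definition~\ref{D:ribbon_braided_Hopf} (Bobtcheva--Piergallini's relations (r8)--(r9), tying $v_+$ to the coproduct and $w_+$ to the braiding) were verified by neither Lyubashenko nor Kerler, and checking them for $\calE$ --- by isotopy arguments resting on the dinaturality of the structure morphisms $j_X$ --- is the main content of Proposition~\ref{P:BP-ribbon} and hence of the proof. Your mechanism for verifying them (dinaturality plus naturality of braiding and twist) is the right one, but they belong to the substantive rather than the purely formal part of the argument.
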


The technology used for our construction was developed over a period of twenty years by Majid \cite{M91}, Lyubashenko \cite{L94}, Crane and Yetter \cite{CY94}, Kerler \cite{K01}, Habiro \cite{H05}, Asaeda \cite{A11}, and Bobtcheva and Piergallini \cite{BP11}. Let us explain what is new in our approach.

The first attempts at constructing a topological invariant of $4$-manifolds using the algebraic input of a ribbon category were made by Turaev using the notion of \textit{shadows}, based on Matveev--Piergallini spines \cite{T94}. Unfortunately, the resulting state sum invariant reduces to the Turaev--Viro invariant of the 3-di\-men\-sion\-al boundary. Note that Turaev--Viro invariants, originally defined for quantum $\fsl_2$ in \cite{TV92}, and later generalized by Barrett and Westbury to arbitrary spherical fusion categories \cite{BW93}, are known to be determined by the corresponding Witten--Reshetikhin--Turaev (WRT) invariants whenever the underlying spherical fusion category is also a modular category \cite{BD93,T94}. For closed 4-man\-i\-folds, similar invariants due to Broda \cite{B93} and to Crane and Yetter \cite{CY93} were proven to boil down to homological invariants like the signature and the Euler characteristic \cite{R93,CKY93}. It should be noted that all these constructions require the use of semisimple categories.

One still can expect non-semisimple ribbon categories to provide combinatorial relatives of Donaldson--Floer invariants, an idea notably promoted by Crane and Frenkel in \cite{CF94}. The first work in this direction, due to Bobtcheva and Messia, was an elaborate extension of the Hennings--Kauffman--Radford (HKR) non-semisimple invariants of closed 3-man\-i\-folds to 4-di\-men\-sion\-al 2-han\-dle\-bod\-ies (up to 2-de\-for\-ma\-tions) \cite{BM02}. The construction relies on the presentation of 4-di\-men\-sion\-al 2-han\-dle\-bod\-ies in terms of Kirby tangles, with dotted and undotted components corresponding to 1-han\-dles and 2-han\-dles respectively. Hennings' evaluation based on the integral of a ribbon Hopf algebra is generalized by assigning special central elements to the two types of components. Unfortunately, no interesting candidates for these central elements were found, except for the known ones reproducing WRT and HKR invariants, and hence all the examples discussed by Bobtcheva and Messia essentially reduce to invariants of the 3-di\-men\-sion\-al boundary, together with the signature.

Our next result provides a conceptual explanation for this phenomenon. Indeed, all the constructions mentioned above were tested on factorizable ribbon categories, which are finite ribbon categories whose braiding satisfies the non-degeneracy condition recalled in Section~\ref{S:unimodular_factorizable_categories}. However, in the factorizable case, the functor $\KLF : \RHB \to \calC$ of Theorem~\ref{T:main_4-dim} factors through a boundary functor $\partial : \RHB \to \FRCob$ with target the category of connected framed 3-di\-men\-sion\-al cobordisms\footnote{All cobordisms considered in this paper are actually \textit{relative} cobordisms, see Section~\ref{S:signature_defects}.} of Section~\ref{S:framed_relative_cobordism_category}. More precisely, the boundary functor $\partial$ sends every 4-di\-men\-sional 2-han\-dle\-bod\-y $W$ to the framed 3-di\-men\-sional cobordism determined by its outgoing boundary $\partial_+ W$ and by its signature $\sigma(W)$, see Sections~\ref{S:relative_handlebodies} and \ref{S:signature_defects}. Furthermore, under additional assumptions on the ribbon structure of $\calC$, the construction is independent of the signature too, in the sense that it factors through a forgetful functor $\FrgtC : \FRCob \to \RCob$ with target the category of connected 3-di\-men\-sion\-al cobordisms without framings. Thus we have the following result.

\begin{theorem}\label{T:main_3-dim}
 If $\calC$ is a factorizable ribbon category, then $\KLF : \RHB \to \calC$ factors through a unique braided monoidal functor 
 \[
  \KLFts : \FRCob \to \calC,
 \]
 meaning there exists a unique commutative diagram of the form
 \begin{center}
  \begin{tikzpicture}[descr/.style={fill=white}]
   \node (P0) at (180:2.5) {$\RHB$};
   \node (P1) at (0,0) {$\FRCob$};
   \node (P2) at (270:2.5) {$\calC$};
   \draw
   (P0) edge[->] node[left,xshift=-5pt] {$\KLF$} (P2)
   (P0) edge[->>] node[above] {$\partial$} (P1)
   (P1) edge[->] node[right] {$\KLFts$} (P2);
  \end{tikzpicture}
 \end{center}
 Furthermore, if there exists an integral $\lambda : \calE \to \one$ satisfying $\lambda \circ v_+ = \lambda \circ v_- = \id_{\one}$ for the ribbon and inverse ribbon elements $v_+,v_- : \one \to \calE$ of Section~\ref{S:end}, then $\KLFts : \FRCob \to \calC$ factors through a unique braided monoidal functor 
 \[
  \KLFt : \RCob \to \calC,
 \]
 meaning there exists a unique commutative diagram of the form
 \begin{center}
  \begin{tikzpicture}[descr/.style={fill=white}]
   \node (P0) at (180:2.5) {$\RHB$};
   \node (P1) at (0,0) {$\FRCob$};
   \node (P2) at (270:2.5) {$\calC$};
   \node (P3) at (0:2.5) {$\RCob$};
   \draw
   (P0) edge[->] node[left,xshift=-5pt] {$\KLF$} (P2)
   (P0) edge[->>] node[above] {$\partial$} (P1)
   (P1) edge[->] node[descr] {$\KLFts$} (P2)
   (P1) edge[->>] node[above] {$\FrgtC$} (P3)
   (P3) edge[->] node[right,xshift=5pt] {$\KLFt$} (P2);
  \end{tikzpicture}
 \end{center}
\end{theorem}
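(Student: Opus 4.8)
The plan is to prove Theorem~\ref{T:main_3-dim} in two stages, each time reducing to a universal property of the intermediate category. For the first factorization, through $\FRCob$, recall that $\partial : \RHB \to \FRCob$ is surjective on morphisms (indeed it is denoted $\twoheadrightarrow$), so if $\KLFts$ exists it is unique; the content is that $\KLF$ is constant on the fibers of $\partial$. Concretely, I would first argue that $\partial$ is a \emph{quotient} of braided monoidal categories: two relative 2-handlebodies $W_0, W_1$ satisfy $\partial W_0 = \partial W_1$ (as framed relative cobordisms) and $\sigma(W_0) = \sigma(W_1)$ if and only if they are related by a finite sequence of moves that add or remove a canceling 2/3-handle pair — equivalently, in the Kirby-tangle/algebraic presentation of $\RHB$ recalled in Section~\ref{S:relative_handlebody_category}, by the additional relations imposed when passing to $\FRCob$. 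This is essentially Kerler's presentation of $\FRCob$ together with Bobtcheva--Piergallini's presentation of $\RHB$. Thus it suffices to check that $\KLF$ sends each such generating relation to an identity. This is where factorizability enters: the extra relation is exactly the statement that $\calE = \int_X X\otimes X^*$ carries the structure making the "$\omega$" element (the integral/cointegral pairing) nondegenerate, so that the Hopf-algebra-object morphism assigned to a canceling handle pair becomes invertible and the pair can be slid off. I would verify this by a string-diagram computation in $\calC$ using the universal dinatural transformation defining the end $\calE$ and the factorizability hypothesis in the form given in Section~\ref{S:unimodular_factorizable_categories}.

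For the second factorization, through $\RCob$, the forgetful functor $\FrgtC : \FRCob \to \RCob$ is again surjective on morphisms, and it is the quotient that kills the framing, i.e.\ that quotients by the ribbon/Dehn-twist moves — adding a $\pm 1$-framed meridian, which at the level of cobordisms changes the signature by $\pm 1$ but not the underlying 3-cobordism. So $\KLFts$ descends to $\RCob$ if and only if $\KLFts$ is insensitive to these framing changes, i.e.\ if and only if the ribbon element $v_+ : \one \to \calE$ and its inverse $v_- : \one \to \calE$ become "invisible" after composing with the structure maps — which is precisely what the hypothesis $\lambda \circ v_+ = \lambda \circ v_- = \id_{\one}$ on the integral $\lambda : \calE \to \one$ guarantees. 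Concretely, a $\pm 1$-framed handle attachment is evaluated by $\KLFts$ as a morphism built from $v_\pm$ capped by $\lambda$ (up to the usual normalization by the global framing anomaly), and the hypothesis forces this to be the identity; hence the relation defining $\FrgtC$ is sent to an identity and $\KLFt$ exists. Uniqueness is automatic from surjectivity of $\FrgtC$.

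The main obstacle, I expect, is the first step: pinning down exactly which relations must be checked, i.e.\ producing a clean generators-and-relations description of the fibers of $\partial$ compatible with the braided monoidal presentation of $\RHB$ by a pre-modular Hopf algebra object. Bobtcheva--Piergallini give such a presentation for $\RHB$, and Kerler gives one for $\FRCob$, but matching them — and isolating the "2/3-handle cancellation" relation as the single extra algebraic identity, then showing factorizability is equivalent to $\KLF$ respecting it — is the technical heart. Once that bookkeeping is done, the verification that $\KLF$ kills the relevant morphisms is a relatively mechanical diagram chase using the defining (co)universal property of the end $\calE$, the integral $\lambda$, and the nondegeneracy pairings; and the second factorization through $\RCob$ is then a short addendum, since the only new input is the behavior of $v_\pm$ under $\lambda$, which is hypothesized outright.
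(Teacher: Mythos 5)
Your overall architecture coincides with the paper's: both factorizations are obtained by passing to the algebraic presentations and checking that the image of the generating Hopf algebra object satisfies the one extra relation defining each quotient. But the ``matching of presentations'' that you flag as the technical heart is not something to be carried out here: Bobtcheva--Piergallini (and Kerler) already prove that $\RHB$, $\FRCob$, and $\RCob$ are equivalent to the free braided monoidal categories $\Algf$, $\tAlgt$, and $\Algt$ on a pre-modular, modular, and anomaly-free modular Hopf algebra respectively, with $\partial$ and $\FrgtC$ corresponding to the quotient functors $\tQtnt$ and $\Qtnt$ that impose exactly one additional axiom each; the paper simply invokes this, which is why its proof is short.

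The genuine gap is in your treatment of the first quotient. The additional axiom there is the non-degeneracy of the copairing $w_+ : \one \to \calE \otimes \calE$ built from the ribbon elements (i.e.\ the monodromy), not of ``the integral/cointegral pairing'' as you write, and the equivalence of this non-degeneracy with factorizability of $\calC$ is a substantive statement --- Proposition~\ref{P:modularity}, proved in Appendix~\ref{A:modularity} by identifying $\calE^*$ with the coend $\int^X X^*\otimes X$ and comparing Drinfeld maps via \cite[Proposition~4.11]{FGR17} --- not a routine unwinding of the definition of factorizability. Moreover, non-degeneracy of $w_+$ alone does not yet make $\calE$ a modular Hopf algebra: one must also produce an integral and cointegral correctly normalized against $w_+$. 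The paper does this by observing that non-degeneracy gives the cancellation property needed for \cite[Lemma~6]{K01}, so that $(\id_{\calE}\otimes\lambda)\circ w_+$ is a cointegral, and then using uniqueness of (co)integrals up to scalar to impose $(\lambda\otimes\lambda)\circ w_+=\id_{\one}$ and $\lambda\circ\Lambda=\id_{\one}$. Your second step is fine as stated: the anomaly-free axiom is precisely $\lambda\circ v_{\pm}=\id_{\one}$, which is hypothesized outright, and uniqueness in both steps follows from fullness of $\partial$ and $\FrgtC$.
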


Theorem~\ref{T:main_3-dim} implies that factorizable ribbon categories are not sensitive to $4$-di\-men\-sion\-al topology beyond the signature. Interesting examples of non-factorizable unimodular ribbon categories are provided by categories of representations of non-factorizable unimodular ribbon Hopf algebras. Already for the small quantum group $u_q \fsl_2$ at a root of unity ${q = \smash{e^{\frac{2 \pi i}{r}}}}$ of order $r \equiv 0 \mod 8$, Corollary~\ref{C:CP2_S2_times_S2} and Remark~\ref{R:boundary&signature} show that our invariant can distinguish the trivial bundle $S^2 \times S^2$ from the twisted bundle $S^2 \ttimes S^2$ (once a $4$-handle has been removed from both) despite them having the same boundary, signature, and Euler characteristic.

It turns out that there is another condition we should pay attention to, apart from factorizability. Indeed, we say two 4-di\-men\-sion\-al 2-han\-dle\-bod\-ies are \textit{$2$-ex\-ot\-ic} if they are diffeomorphic but not 2-e\-quiv\-a\-lent. As we mentioned earlier, whether 2-ex\-ot\-ic pairs exist at all is still an open question, but the answer is widely believed to be positive. However, any 2-de\-for\-ma\-tion invariant of 4-di\-men\-sion\-al 2-han\-dle\-bod\-ies that is multiplicative under boundary connected sum (like $\KLF$) will be unable to detect 2-ex\-ot\-ic pairs unless it vanishes against $S^2 \times D^2$. This is due to the following instability of 2-ex\-ot\-ic phenomena: any two diffeomorphic 4-di\-men\-sion\-al 2-han\-dle\-bod\-ies become 2-e\-quiv\-a\-lent after a finite number of boundary connected sums with $S^2 \times D^2$, see Lemma~\ref{L:instability}. Luckily, there is a simple condition that ensures the vanishing of $\KLF(S^2 \times D^2)$, which can be easily stated in the case $\calC = \mods{H}$ for a unimodular ribbon Hopf algebra $H$: the scalar $\KLF(S^2 \times D^2)$ is invertible if and only if $H$ is cosemisimple (in the sense that $H^*$ is semisimple), and similarly the scalar $\KLF(S^1 \times D^3)$ is invertible if and only if $H$ is semisimple, see Corollary~\ref{C:co_semisimple}. In Section~\ref{S:explicit_formulas}, we consider a large family of unimodular ribbon Hopf algebras (including the small quantum group $u_q \fsl_2$ discussed earlier) that are neither semisimple nor cosemisimple, with many being also non-factorizable. In particular, all the corresponding invariants have the potential of detecting 2-ex\-ot\-ic pairs.

\subsection{Potential applications}

Our construction might shed light on a deep open problem arising in combinatorial group theory called the \textit{Andrews--Curtis Conjecture} \cite{AC65}. Indeed, a slight generalization\footnote{In the original Andrews--Curtis Conjecture, move $(iv)$ is not considered, and the empty presentation is replaced by a trivial presentation where all relators coincide with generators (bijectively).} of the original statement can be formulated as follows.

\begin{conjecture}[Generalized Andrews--Curtis]\label{C:AC}
 Every balanced\footnote{A balanced presentation of a group is a finite presentation featuring the same number of generators as relators.} presentation of the trivial group can be reduced to the empty presentation by a sequence of \textit{Andrews--Curtis moves}, which are operations of the following type:
 \begin{enumerate}
  \item Replacement of a relator with its conjugate by a generator;
  \item Replacement of a relator with its inverse;
  \item Replacement of a relator with its product with another relator;
  \item Creation/removal of an identical pair of generator and relator.
 \end{enumerate}
\end{conjecture}

This problem is related to the analogue of the Poincaré Conjecture for 2-e\-quiv\-a\-lence classes of 4-di\-men\-sion\-al 2-han\-dle\-bod\-ies, which can be formulated as follows.

\begin{conjecture}\label{C:4-dim_AC}
 Every 4-di\-men\-sion\-al 2-han\-dle\-body that is diffeomorphic to $D^4$ is 2-equivalent to $D^4$. 
\end{conjecture}

The relation between the two statements follows from the observation that every 4-di\-men\-sion\-al 2-han\-dle\-body is the 4-di\-men\-sion\-al thickening of a 2-di\-men\-sion\-al CW-complex, in the sense that it deformation-retracts onto the complex determined by the cores of its handles. Every such 2-di\-men\-sion\-al CW-complex naturally yields a finite presentation of its fundamental group, with generators corresponding to 1-cells and relators corresponding to 2-cells, and conversely every finitely presented group can be realized as the fundamental group of a 2-di\-men\-sion\-al CW-complex. Under this correspondence, 2-de\-for\-ma\-tions are translated to sequences of Andrews--Curtis moves\footnote{Moves $(i)$ and $(ii)$ correspond to isotopies, move $(iii)$ corresponds to a 2-handle slide, and move $(iv)$ corresponds to the creation/removal of canceling $1/2$-handle pair.}. Therefore, a proof of Conjecture~\ref{C:4-dim_AC} would immediately imply Conjecture~\ref{C:AC}. However, since 4-di\-men\-sion\-al thickenings are not unique, the other implication is not clear. This means that, as both conjectures are expected to be false, a counterexample to Conjecture~\ref{C:4-dim_AC} would not immediately disprove Conjecture~\ref{C:AC}, although it would provide strong evidence against it.

The functor $\KLF$ could be tested on potential counterexamples to Conjecture~\ref{C:4-dim_AC}, like the family of $4$-balls $\{ \Delta_n \mid n \in \N \}$ defined by
\[
 \Delta_n := \pic{Akbulut-Kirby}
\]
These $4$-man\-i\-folds were introduced by Gompf in \cite[Figure~4]{G91}, and they were all shown to be diffeomorphic to $D^4$. However, Gompf's diffeomorphism involves the use of a canceling $2/3$-handle pair, so it remains unknown whether these $4$-balls are also 2-e\-quiv\-a\-lent to $D^4$ or not, see \cite[Conjecture~B]{G91}. Their fundamental groups admit the following balanced presentations:
\[
 \pi_1(\Delta_n) = \langle x,y \mid xyx=yxy, x^n=y^{n+1} \rangle.
\]
These are expected to provide counterexamples to the Andrews--Curtis Conjecture for $n \geqs 3$, see \cite[Conjecture~A]{G91}. Should any of them be validated, the following famous conjecture (which appears as Problem~1.82 in Kirby's list \cite{K97}) would be disproved, thanks to \cite{GST10}.

\begin{conjecture}[Generalized Property R Conjecture]
 If surgery along a framed $n$-component link $L \subset S^3$ yields $(S^1 \times S^2)^{\# n}$, then $L$ is obtained from the $0$-framed unlink by a sequence of handle slides.
\end{conjecture}

A more elaborate family of potential counterexamples to Conjecture~\ref{C:4-dim_AC} is provided Akbulut--Kirby $4$-balls, depicted in \cite[Figure~28]{AK85} in a special case, and in \cite[Figure~36]{G91} in general. These $4$-man\-i\-folds were all shown to be diffeomorphic to $D^4$, first by Gompf in a special case \cite{G91}, and later by Akbulut in full generality \cite{A09}. However, whether they are also $2$-e\-quiv\-a\-lent to $D^4$ remains an open question.

\subsection{Overview of the construction} As suggested by our previous discussion, this paper focuses on three closely related categories, and it is convenient to describe each of them in three different ways. The first incarnation is topological:
\begin{itemize}
 \item $\RHB$ is the category of connected 4-di\-men\-sion\-al 2-han\-dle\-bod\-ies up to 2-de\-for\-ma\-tion, which was introduced by Bobtcheva and Piergallini in \cite{BP11}, see Sections~\ref{S:relative_handlebodies} and \ref{S:relative_handlebody_category}.
 \item $\FRCob$ is the category of connected framed 3-di\-men\-sion\-al cobordisms up to diffeomorphism, which was first studied by Kerler in \cite{K01}, see Sections~\ref{S:signature_defects} and \ref{S:framed_relative_cobordism_category}.
 \item $\RCob$ is the category of connected 3-di\-men\-sion\-al cobordisms up to diffeomorphism, which was first considered by Kerler in \cite{K96}, see Section~\ref{S:framed_relative_cobordism_category}.
\end{itemize}
The reason for considering $\FRCob$, alongside $\RCob$, is a recurring phenomenon in quantum topology, as many of the relevant algebraic structures giving rise to quantum constructions actually produce functors which require framings on 3-di\-men\-sion\-al cobordisms. We also stress the fact that, although $\RHB$ is referred to as the category of relative handlebody cobordisms in \cite{BP11}, it is \textit{not} actually a (relative) cobordism category, in the sense that composition in $\RHB$ is not the composition of cobordisms, see Section~\ref{S:relative_handlebody_category} for more details.

As it often happens in this context, these topological categories are conveniently described in diagrammatic terms. This leads us to consider three diagrammatic categories which are equivalent to the previous topological ones:
\begin{itemize}
 \item $\KTan$ is the category of Kirby tangles, as considered by Bobtcheva and Piergallini in \cite{BP11}, see Section~\ref{S:Kirby_tangles}.
 \item $\ETTH$ is the category of signed top tangles in handlebodies, which is introduced here for the first time, see Section~\ref{S:signed_top_tangles}.
 \item $\TTH$ is the category of top tangles in handlebodies, which is a generalization of the category defined by Habiro in \cite{H05}, see Section~\ref{S:signed_top_tangles}.
\end{itemize}

All these topological and diagrammatic categories admit explicit algebraic presentations. This is the result of a deep work initiated by Crane and Yetter \cite{CY94}, developed by Kerler \cite{K01}, Habiro \cite{H05}, and Asaeda \cite{A11}, and fully completed by Bobtcheva and Piergallini \cite{BP11}. In particular, this means we can consider three algebraically presented categories which are equivalent to the above ones:
\begin{itemize}
 \item $\Algf$ is the free braided monoidal category generated by a BPH algebra (short for Bobtcheva--Piergallini Hopf algebra), see Section~\ref{S:pre-modular_Hopf_agebras}.
 \item $\tAlgt$ is the free braided monoidal category generated by a factorizable BPH algebra, see Section~\ref{S:modular_Hopf_algebras}.
 \item $\Algt$ is the free braided monoidal category generated by an anomaly-free factorizable BPH algebra, see Section~\ref{S:modular_Hopf_algebras}.
\end{itemize} 
All these definitions are due to Bobtcheva and Piergallini, although we introduce small changes to their terminology. In short, BPH algebras in braided monoidal categories are objects equipped with several structure morphisms satisfying a list of axioms. As explained in Section~\ref{S:Kirby_top_functors}, a diagrammatic representation of these generating morphisms for the categories above is summarized by the following table:
\begin{center}
 \begin{tabular}{ c c c c c c }
  \\
  {} & $\Algf$ & $\Algt$ & {} & $\Algf$ & $\Algt$ \\
  {} & $\KTan$ & $\TTH$ & {} & $\KTan$ & $\TTH$ \\
  {} & $\RHB$ & $\RCob$ & {} & $\RHB$ & $\RCob$ \\
  \toppic{generators_product} & 
  \toppic{generators_product_KTan} &
  \toppic{generators_product_TTan} &
  \toppic{generators_unit} & 
  \toppic{generators_unit_KTan} &
  \toppic{generators_unit_TTan}
  \\[5pt]
  \toppic{generators_coproduct} &
  \toppic{generators_coproduct_KTan} & 
  \toppic{generators_coproduct_TTan} &
  \toppic{generators_counit} & 
  \toppic{generators_counit_KTan} &
  \toppic{generators_counit_TTan}
  \\[5pt]
  \toppic{generators_antipode} &
  \toppic{generators_antipode_KTan} & 
  \toppic{generators_antipode_TTan} &
  \toppic{generators_antipode_inverse} &
  \toppic{generators_antipode_inverse_KTan} & 
  \toppic{generators_antipode_inverse_TTan}
  \\[5pt]
  \toppic{generators_ribbon} &
  \toppic{generators_ribbon_KTan} & 
  \toppic{generators_ribbon_TTan} &
  \toppic{generators_ribbon_inverse} &
  \toppic{generators_ribbon_inverse_KTan} & 
  \toppic{generators_ribbon_inverse_TTan}
  \\[5pt]
  \toppic{generators_integral} &
  \toppic{generators_integral_KTan} & 
  \toppic{generators_integral_TTan} &
  \toppic{generators_cointegral} &
  \toppic{generators_cointegral_KTan} & 
  \toppic{generators_cointegral_TTan} 
  \\
 \end{tabular}
\end{center}

\newpage

Furthermore, all these different categories are related by the following diagram of functors, where horizontal arrows denote projections, and vertical arrows denote equivalences:
\begin{center} 
 \begin{tikzpicture}[descr/.style={fill=white}]
  \node (P0) at (135:{2.5*sqrt(2)}) {$\Algf$};
  \node (P1) at (90:{2.5}) {$\tAlgt$};
  \node (P2) at (45:{2.5*sqrt(2)}) {$\Algt$};
  \node (P3) at (180:{2.5}) {$\KTan$};
  \node (P4) at (0,0) {$\ETTH$};
  \node (P5) at (0:{2.5}) {$\TTH$};
  \node (P6) at (225:{2.5*sqrt(2)}) {$\RHB$};
  \node (P7) at (270:{2.5}) {$\FRCob$};
  \node (P8) at (315:{2.5*sqrt(2)}) {$\RCob$};
  \draw
  (P0) edge[->>] node[above] {$\tQtnt$} (P1)
  (P1) edge[->>] node[above] {$\Qtnt$} (P2)
  (P0) edge[->] node[left] {$K$} node[right] {\rotatebox{270}{$\cong$}} (P3)
  (P1) edge[->] node[left] {$T^\sigma$} node[right] {\rotatebox{270}{$\cong$}} (P4)
  (P2) edge[->] node[left] {$T$} node[right] {\rotatebox{270}{$\cong$}} (P5)  
  (P3) edge[->>] node[above] {$E$} (P4)
  (P4) edge[->>] node[above] {$\FrgtT$} (P5)
  (P3) edge[->] node[left] {$D$} node[right] {\rotatebox{270}{$\cong$}} (P6)
  (P4) edge[->] node[left] {$\chi^\sigma$} node[right] {\rotatebox{270}{$\cong$}} (P7)
  (P5) edge[->] node[left] {$\chi$} node[right] {\rotatebox{270}{$\cong$}} (P8)
  (P6) edge[->>] node[above] {$\partial$} (P7)
  (P7) edge[->>] node[above] {$\FrgtC$} (P8);
 \end{tikzpicture}
\end{center}
The missing definitions can be found in Sections~\ref{S:boundary_forgetful_functors}, \ref{S:handle_surgery_functors}, and \ref{S:Kirby_top_functors}.

The Kerler--Lyubashenko functor $\KLF : \RHB \to \calC$ appearing in Theorem~\ref{T:main_4-dim} is based on the explicit algebraic presentation of $\RHB$ obtained by Bobtcheva and Piergallini. The strategy is essentially to prove that the end
\[
 \calE = \int_{X \in \calC} X \otimes X^*
\]
of a unimodular ribbon category $\calC$ supports the structure of a BPH algebra in $\calC$, and can therefore be chosen as the image of the generating object of $\Algf$. Furthermore, increasingly strong additional assumptions on $\calC$ ensure that $\KLF : \RHB \to \calC$ factors through successive quotients, as proved in Section~\ref{S:proofs}.

We end this introduction with a couple of comments. First, we restate Bobtcheva and Piergallini's deep result in terms of braided monoidal functors of the form $V : \FRCob \to \calC$ for a braided monoidal category $\calC$, which we refer to as \textit{$\calC$-valued braided TFTs} (short for \textit{Topological Field Theories}). Let us highlight a couple of key differences with respect to Atiyah's well-known axioms: 
\begin{enumerate}
 \item The monoidal structure on $\FRCob$ is induced by boundary connected sum, instead of disjoint union.
 \item The braiding on $\FRCob$ is induced by permutation of 1-han\-dles, and therefore is not symmetric.
\end{enumerate}
We say a $\calC$-valued braided TFT is \textit{anomaly-free} if it factors through the forgetful functor $\FrgtC : \FRCob \to \RCob$.

\begin{theorem}[Bobtcheva--Piergallini]
 If $\calC$ is a braided monoidal category, then (a\-nom\-a\-ly-free) $\calC$-valued braided TFTs are classified by (a\-nom\-a\-ly-free) factorizable BPH algebras in $\calC$.
\end{theorem}

This statement is a further step towards the algebraization of 3-di\-men\-sion\-al TFTs, after the complete classification of 2-di\-men\-sion\-al TFTs in terms of commutative Frobenius algebras.

Finally, let us briefly mention how our work relates to Reutter's proof that semisimple $4$-di\-men\-sion\-al TFTs cannot distinguish homeomorphic closed $4$-man\-i\-folds and homotopy equivalent simply connected ones \cite{R20}. The reason why our construction is not automatically subject to this kind of constraints is that we step outside of the framework in which Reutter draws his conclusions in several ways: indeed, we use $4$-dimensional $2$-handlebodies instead of arbitrary $4$-manifolds, we consider $2$-deformations instead of diffeormorphisms, and in addition to our previous remark concerning braided monoidal structures, our functors do not seem to satisfy any obvious reformulation of Reutter's semisimplicity assumption, since the (braided) Hopf algebra associated with the solid torus is clearly not semisimple.

\subsection*{Acknowledgments} We would like to express our gratitude to Maria Stamatova for bringing to our attention the work of Bobtcheva and Piergallini, without which the paper would not have been written. Our work was supported by the National Center of Competence in Research (NCCR) SwissMAP, and by Grant n.~200020\_207374 of the Swiss National Science Foundation (SNSF).

\section{Topological preliminaries}

In this section we recall some basic facts about handle decompositions and signature defects for the convenience of the reader.

\subsection*{Conventions}

All manifolds considered in this paper are compact, smooth, and oriented, that is, orientable and equipped with a fixed orientation, and all diffeomorphisms are orientation-preserving. The interval $[0,1]$ will be denoted $I$, the unit $n$-disc will be denoted $D^n$, and the unit $n$-sphere will be denoted $S^n$. All corners that appear as a result of taking products between manifolds with boundary and of gluing manifolds along submanifolds of their boundaries can be smoothed canonically, up to diffeomorphism, and we will tacitly do so without comment. Furthermore, we fix throughout the paper an algebraically closed field $\Bbbk$, and the term linear will always stand for $\Bbbk$-linear, unless stated otherwise.

\subsection{Handlebodies and deformations}\label{S:relative_handlebodies}

Let $n$ and $k \leqs n$ be natural numbers. Recall that an \textit{$n$-di\-men\-sion\-al $k$-han\-dle} is a copy $D_{(k,n)}$ of the $n$-manifold $D^k \times D^{n-k}$. Its \textit{core} is the $k$-sub\-man\-i\-fold $D^k \times \{ 0 \}$, while its \textit{cocore} is the $(n-k)$-sub\-man\-i\-fold $\{ 0 \} \times D^{n-k}$. It is often useful to think about $D_{(k,n)}$ as the $n$-di\-men\-sion\-al thickening of a $k$-cell. Indeed, the boundary $\partial D_{(k,n)}$ can be decomposed into the \textit{attaching tube} $A_{(k,n)} := (\partial D^k) \times D^{n-k}$ and the \textit{belt tube} $B_{(k,n)} := D^k \times (\partial D^{n-k})$. These provide tubular neighborhoods, inside $\partial D^{(k,n)}$, for the \textit{attaching sphere} $(\partial D^k) \times \{ 0 \}$ and the \textit{belt sphere} $\{ 0 \} \times (\partial D^{n-k})$. An $n$-di\-men\-sion\-al $k$-han\-dle $D_{(k,n)}$ can be attached to the boundary of an $n$-dimensional manifold $X$ using an embedding $\varphi : A_{(k,n)} \hookrightarrow \partial X$. See \cite[Figure~4.1]{GS99} for an example of the attachment of a copy of $D_{(1,2)}$.

If $Y$ is an $(n-1)$-man\-i\-fold, an \textit{$n$-di\-men\-sion\-al $k$-han\-dle\-bod\-y built on $Y$}, sometimes simply called a \textit{handlebody}\footnote{In \cite{BP11} the term \textit{relative} handlebody is used, but since all handlebodies considered here are of this type, we can adopt a simplified terminology without the risk of confusion.},  is an $n$-man\-i\-fold $X$ equipped with a filtration
\[
 Y \times I = X^{-1} \subseteq X^0 \subseteq \ldots \subseteq X^{k-1} \subseteq X^k = X
\]
of $n$-man\-i\-folds such that, if the \textit{incoming boundary} of $X^i$ is defined to be
\[
 \partial_- X^i := Y \times \{ 0 \}
\]
and the \textit{outgoing boundary} of $X^i$ is defined to be
\[
 \partial_+ X^i := (\partial X^i) \smallsetminus \left( (Y \times \{ 0 \}) \cup ((\partial Y) \times \ring{I}) \right)
\]
for every $-1 \leqs i \leqs k$, then $X^i$ is obtained from $X^{i-1}$ by attaching a finite number of $n$-di\-men\-sion\-al $i$-han\-dles to $\partial_+ X^{i-1}$, see for instance \cite[Definition~1.2.2]{BP11} for a precise definition. Similarly, a \textit{dual} $n$-di\-men\-sion\-al $k$-han\-dle\-bod\-y built on $Y$ is an $n$-man\-i\-fold $X$ equipped with a filtration
\[
 Y \times I = X^{-1} \subseteq X^0 \subseteq \ldots \subseteq X^{k-1} \subseteq X^k = X
\]
of $n$-man\-i\-folds such that, if the \textit{outgoing boundary} of $X^i$ is defined to be
\[
 \partial_+ X^i := Y \times \{ 1 \}
\]
and the \textit{incoming boundary} of $X^i$ is defined to be
\[
 \partial_- X^i := (\partial X^i) \smallsetminus \left( (Y \times \{ 1 \}) \cup ((\partial Y) \times \ring{I}) \right)
\]
for every $-1 \leqs i \leqs k$, then $X^i$ is obtained from $X^{i-1}$ by attaching a finite number of $n$-di\-men\-sion\-al $i$-han\-dles to $\partial_- X^{i-1}$. Most of the time, we will abusively denote handlebodies and dual handlebodies simply by their underlying manifold, without mention to the accompanying filtration.

Now let $\ell \leqs n$ be a natural number. An \textit{$\ell$-de\-for\-ma\-tion} of a $n$-di\-men\-sion\-al $k$-han\-dle\-bod\-y $X$ is a finite sequence of moves of the following type:
\begin{enumerate}
 \item isotopy of the attaching map of an $i$-han\-dle inside $\partial_+ X^{i-1}$ for $0 \leqs i \leqs \ell$;
 \item handle slide of an $i$-han\-dle over another $i$-han\-dle for $0 \leqs i \leqs \ell$;
 \item creation/removal of a canceling pair of $(i-1)/i$-han\-dles for $1 \leqs i \leqs \ell$.
\end{enumerate}
Two $n$-di\-men\-sion\-al $k$-handlebodies built on the same $(n-1)$-di\-men\-sion\-al manifold are said to be \textit{$\ell$-e\-quiv\-a\-lent} if they are related by an $\ell$-de\-for\-ma\-tion. In particular, if two $n$-di\-men\-sion\-al $k$-han\-dle\-bod\-ies are $k$-e\-quiv\-a\-lent, it means they can be deformed into one another without ever stepping outside of the class of $n$-di\-men\-sion\-al $k$-han\-dle\-bod\-ies. Two $n$-di\-men\-sion\-al $(n-1)$-han\-dle\-bod\-ies are diffeomorphic if and only if they are $(n-1)$-e\-quiv\-a\-lent, see for instance \cite[Section~1.2]{BP11}. It remains an open question to understand whether every pair of diffeomorphic $n$-di\-men\-sion\-al $(n-1)$-han\-dle\-bod\-ies are also $(n-2)$-e\-quiv\-a\-lent. Up to 1-de\-for\-ma\-tion, every connected handlebody with connected incoming boundary can be assumed to have no additional 0-han\-dle, see for instance \cite[Proposition~1.2.4]{BP11}.

\subsection{Cobordisms and signature defects}\label{S:signature_defects}

If $Z$ is a closed $(n-1)$-man\-i\-fold, an \textit{$n$-man\-i\-fold with $Z$-pa\-ram\-e\-trized boundary}, sometimes simply called a \textit{manifold with parametrized boundary}, is a pair $(Y,\varphi)$, where $Y$ is an $n$-man\-i\-fold and $\varphi : Z \to \partial Y$ is a diffeomorphism. A \textit{relative cobordism between $n$-man\-i\-folds with $Z$-pa\-ram\-e\-trized boundary} $(Y,\varphi)$ and $(Y',\varphi')$, sometimes simply called a \textit{cobordism}, is an $(n+1)$-man\-i\-fold with $(Y,Y')$-pa\-ram\-e\-trized boundary $(X,\psi)$, where 
\[
 (Y,Y') := (-Y) \cup_{(\varphi \times \{ 0 \})} (Z \times I) \cup_{(\varphi' \times \{ 1 \})} Y',
\]
where $-Y$ denotes $Y$ with the opposite orientation. Most of the time, we will abusively denote manifolds with parametrized boundary and cobordisms by their underlying manifold, without explicitly specifying boundary identifications. Notice that the notion of relative cobordism between man\-i\-folds with $Z$-pa\-ram\-e\-trized boundary given here recovers the standard notion of cobordism when $Z = \varnothing$.

If $n = 4k+2$, then a \textit{Lagrangian subspace} $\calL \subset H_{2k+1}(Y;\R)$ for the $(2k+1)$th homology of an $n$-man\-i\-fold $Y$ is a maximal isotropic subspace with respect to the transverse intersection pairing $\pitchfork_Y : H_{2k+1}(Y;\R) \times H_{2k+1}(Y;\R) \to \R$, which is an antisymmetric bilinear form on $H_{2k+1}(Y;\R)$, and which is non-de\-gen\-er\-ate if and only if $\partial Y = \varnothing$. Every relative cobordism $(X,\psi)$ between $n$-man\-i\-folds with $Z$-pa\-ram\-e\-trized boundary $(Y,\varphi)$ and $(Y',\varphi')$ can be used to push forward to the $(2k+1)$th homology of $Y'$ any Lagrangian subspace $\calL \subset H_{2k+1}(Y;\R)$, by setting
\[
 X_*(\calL) := \{ y' \in H_{2k+1}(Y';\R) \mid i_*(\psi_*(y')) \in i_*(\psi_* (\calL)) \},
\]
where $i : \partial X \hookrightarrow X$ denotes inclusion, and similarly to pull back to the $(2k+1)$th homology of $Y$ any Lagrangian subspace $\calL' \subset H_{2k+1}(Y';\R)$, by setting
\[
 X^*(\calL') := \{ y \in H_{2k+1}(Y;\R) \mid i_*(\psi_*(y)) \in i_*(\psi_* (\calL')) \}.
\]
If $\calL,\calL',\calL'' \in H_{2k+1}(Y;\R)$ are Lagrangian subspaces, then their \textit{Maslov index}
\[
 \mu(\calL,\calL',\calL'') \in \Z
\]
is an integer which is defined in \cite[Section~IV.3.5]{T94} (see \cite[Appendix~C.3]{D17} for the version that is needed here whenever $\partial Y \neq \varnothing$), and which is completely antisymmetric in all its entries.

If $n = 4k$, then the \textit{signature} $\sigma(W) \in \Z$ of an $n$-man\-i\-fold $W$ is the signature of the transverse intersection pairing $\pitchfork_W : H_{2k}(W;\R) \times H_{2k}(W;\R) \to \R$, which is a symmetric bilinear form on $H_{2k}(W;\R)$. The famous non-additivity theorem of Wall \cite{W69} states that whenever $W = W_- \cup_{M_0} W_+$ for some $n$-di\-men\-sion\-al submanifolds $W_-$ and $W_+$ and some $(n-1)$-di\-men\-sion\-al submanifold $M_0$ satisfying $\partial W_- = (-M_-) \cup_\varSigma M_0$ and $\partial W_+ = (-M_0) \cup_\varSigma M_+$, where $M_-$ and $M_+$ are $(n-1)$-di\-men\-sion\-al submanifolds and $\varSigma$ is an $(n-2)$-di\-men\-sion\-al submanifold satisfying $\partial W = (-M_-) \cup M_+$ and $(-M_-) \cap M_+ = \partial M_- = \partial M_0 = \partial M_+ = \varSigma$, then
\[
 \sigma(W) = \sigma(W_-) + \sigma(W_+) + \mu(\calL_-,\calL_0,\calL_+),
\]
where ${\calL_0 := \ker(i_{\varSigma_0 *})}$ and ${\calL_{\pm} := \ker(i_{\varSigma_{\pm} *})}$ for the homomorphisms
\[
 i_{\varSigma_0 *} : H_{2k-1}(\varSigma) \rightarrow H_{2k-1}(M_0), \quad
 i_{\varSigma_{\pm} *} : H_{2k-1}(\varSigma) \rightarrow H_{2k-1}(M_{\pm}).
\]
induced by inclusion.

\section{Handlebody and cobordism categories}\label{S:relative_handlebodies_and_cobordisms}

In this section, we recall the definition of the three topological categories we will focus on: Bobtcheva and Piergallini's category $\RHB$ of connected 4-di\-men\-sion\-al 2-han\-dle\-bod\-ies, and Kerler's categories $\FRCob$ and $\RCob$ of connected (framed)  3-di\-men\-sion\-al cobordisms.

\subsection{4-dimensional 2-handlebody category}\label{S:relative_handlebody_category}

For every $m \in \N$, we specify a standard connected 3-di\-men\-sion\-al 1-han\-dle\-bod\-y $H_m \subset \R^3$ built on the square $I^2$ and featuring precisely $m$ 1-handles, which we represent graphically through the projection to $\R \times \{ 0 \} \times \R$ as
\begin{align*}
 \\*[-7.5pt]
 \pic{projection_H_m} \\*[-7.5pt]
\end{align*}
and we denote by $\varSigma_{m,1}$ the surface $\partial_+ H_m$. Similarly, we denote by $H_m^* \subset \R^3$ the standard connected dual handlebody obtained from $H_m$ by reflection with respect the plane $\R^2 \times \{ \frac 12 \}$, which we also represent graphically as
\begin{align*}
 \\*[-7.5pt]
 \pic{projection_H_m_dual} \\*[-7.5pt]
\end{align*}
and we denote by $\varSigma_{m,1}^*$ the surface $\partial_- H_m^*$.

The \textit{category $\RHB$ of connected $4$-di\-men\-sion\-al $2$-han\-dle\-bod\-ies}, first introduced in \cite{BP11} under the notation $\Chb^{3+1}_1$, is defined as follows:
\begin{itemize}
 \item Objects of $\RHB$ are natural numbers $m \in \N$.
 \item Morphisms of $\RHB$ from $m$ to $m'$ are 2-de\-for\-ma\-tion classes of connected 4-di\-men\-sion\-al 2-han\-dle\-bod\-ies $W$ built on the 3-manifold
  \[
   X(H_m,H_{m'}) := H_m^* \cup_{(I^2 \times \{ 0 \})} I^3 \cup_{(I^2 \times \{ 1 \})} H_{m'}
  \]
  obtained from the standard connected handlebodies introduced earlier, which we represent schematically as
  \[
   \pic{relative_handlebody_2}
  \]
  The $-1$st manifold in the filtration of $W$ is represented schematically as
  \[
   \pic{relative_handlebody_3}
  \]
  As observed earlier, morphisms of $\RHB$ are diffeomorphic if and only if they are 3-e\-quiv\-a\-lent, but whether the same holds for 2-e\-quiv\-a\-lence or not is still an open question.
 \item The identity $\id_m \in \RHB(m,m)$ of an object $m \in \RHB$ is the 2-de\-for\-ma\-tion class of the handlebody $H_m \times I$, which can be built on the 3-manifold $X(H_m,H_m)$ by attaching 2-han\-dles like
 \[
  \pic{relative_handlebody_4}
 \]
 \item The composition 
  \[
   W' \circ W \in \RHB(m,m'')
  \]
  of morphisms $W \in \RHB(m,m')$, $W' \in \RHB(m',m'')$ is given by 
  \[
   W \cup_{H_{m'}} W'.
  \]
\end{itemize}
The category $\RHB$ can be given the structure of a monoidal category:
\begin{itemize}
 \item The tensor product 
  \[
   m \otimes m' \in \RHB
  \]
  of objects $m,m' \in \RHB$ is given by the sum $m + m'$. 
 \item The tensor product 
  \[
   W \otimes W' \in \RHB(m \otimes m',m'' \otimes m''')
  \]
  of morphisms $W \in \RHB(m,m'')$, $W' \in \RHB(m',m''')$ is given by 
  \[
   W \bcs W',
  \]
  where $W \bcs W'$ is obtained by gluing horizontally $W$ to $W'$, identifying the right side of $W$ with the left side of $W'$ as prescribed by the identity map, and then by shrinking the result in order to get a handlebody built on $X(H_{m + m'},H_{m'' + m'''})$.
\end{itemize}

We stress the fact that, although $\RHB$ is called the category of relative handlebody cobordisms in \cite{BP11} and in \cite{B20}, it is not a (relative) cobordism category. Indeed, despite the fact that every handlebody naturally admits the structure of a cobordism, the composition of 4-di\-men\-sion\-al 2-han\-dle\-bod\-ies in $\RHB$ is not the composition of the corresponding cobordisms structures. Furthermore, 4-di\-men\-sion\-al 2-han\-dle\-bod\-ies in $\RHB$ are considered only up to 2-de\-for\-ma\-tion, which is expected to determine a different equivalence relation than the one determined by diffeomorphism.

\subsection{Framed 3-dimensional cobordism category}\label{S:framed_relative_cobordism_category}

For each standard connected surface $\varSigma_{m,1}$ introduced earlier, we specify a standard Lagrangian subspace $\calL_m \subset H_1(\varSigma_{m,1};\R)$ defined as the subspace generated by $\{ \beta_1,\ldots,\beta_m,\partial \}$, where $\beta_i$ denotes the homology class of the belt sphere of the $i$th 1-han\-dle of $\varSigma_{m,1}$ for every integer $1 \leqs i \leqs m$, and $\partial$ denotes the homology class of the boundary of $\varSigma_{m,1}$, as represented here below
\[
 \pic{Lagrangian}
\]

The \textit{category $\FRCob$ of connected framed $3$-di\-men\-sion\-al cobordisms}, first considered in \cite{K01} under the notation $\bfCob$, is defined as follows:
\begin{itemize}
 \item Objects of $\FRCob$ are natural numbers $m \in \N$.
 \item Morphisms of $\FRCob$ from $m$ to $m'$ are pairs $(M,\sig)$, where $M$ is the diffeomorphism class of a connected 3-di\-men\-sion\-al cobordism $M$ from $\varSigma_{m,1}$ to $\varSigma_{m',1}$ for the standard connected surfaces introduced earlier, and $\sig \in \Z$ is an integer, called the \textit{signature defect}.
 \item The identity $\id_m \in \FRCob(m,m)$ of an object $m \in \FRCob$ is the pair 
  \[ 
   (\varSigma_{m,1} \times I,0).
  \]
 \item The composition 
  \[
   (M',\sig') \circ (M,\sig) \in \FRCob(m,m'')
  \]
  of morphisms $(M,\sig) \in \FRCob(m,m')$, $(M',\sig') \in \FRCob(m',m'')$ is given by
  \[
   (M \cup_{\varSigma_{m',1}} M', \sig + \sig' - \mu(M_*(\calL_m),\calL_{m'},(M')^*(\calL_{m''}))).
  \]
\end{itemize}
The category $\FRCob$ can be given the structure of a monoidal category:
\begin{itemize}
 \item The tensor product 
  \[
   m \otimes m' \in \FRCob
  \]
  of objects $m,m' \in \FRCob$ is given by the sum $m + m'$. 
 \item The tensor product 
  \[
   (M,\sig) \otimes (M',\sig') \in \FRCob(m \otimes m',m'' \otimes m''')
  \]
  of morphisms $(M,\sig) \in \FRCob(m,m'')$, $(M',\sig') \in \FRCob(m',m''')$ is given by
  \[
   (M \bcs M', \sig+\sig'),
  \]
  where $M \bcs M'$ is obtained by gluing horizontally $M$ to $M'$, identifying the right side of $M$ with the left side of $M'$ as prescribed by the identity map, and then by shrinking the result in order to get a cobordism from $\varSigma_{m + m',1}$ to $\varSigma_{m'' + m''',1}$.
\end{itemize}

Finally, the \textit{category $\RCob$ of connected $3$-di\-men\-sion\-al cobordisms}, first appearing in \cite{K96} under the notation $\Cob_3(1)^\conn$, and in \cite{K01} under the notation $\bfCob_0$, is simply obtained from $\FRCob$ by forgetting signature defects.

\subsection{Boundary and forgetful functors}\label{S:boundary_forgetful_functors} There is a \textit{boundary} functor 
\[
 \partial : \RHB \to \FRCob
\]
that is given by the identity on objects, and that sends every 4-di\-men\-sion\-al 2-han\-dle\-bod\-y $W$ in $\RHB(m,m')$ to the pair $(\partial_+ W, \sigma(W))$ in $\FRCob(m,m')$, where $\partial_+ W$ is the 3-di\-men\-sion\-al cobordism determined by the outgoing boundary of $W$, and $\sigma(W)$ is the signature of $W$. Remark that 2-de\-for\-ma\-tions are diffeomorphisms of a special kind, so clearly 2-e\-quiv\-a\-lent handlebodies have diffeomorphic boundaries. The boundary functor $\partial$ is surjective on objects and full, which means it gives a projection from $\RHB$ to $\FRCob$, see \cite[Section~5.1]{BP11}.

We also have a \textit{forgetful} functor 
\[
 \FrgtC : \FRCob \to \RCob
\]
that is given by the identity on objects, and that sends every pair $(M,\sig)$ in $\FRCob(m,m')$ to the 3-di\-men\-sion\-al cobordism $M$. The forgetful functor $\FrgtC$ is clearly surjective on objects and full, which means it gives a projection from $\FRCob$ to $\RCob$.

\section{Kirby and signed top tangle categories}\label{S:kirby_top_tangles}

The aim of this section is to give a diagrammatic description of the three topological categories of Section~\ref{S:relative_handlebodies_and_cobordisms} by first defining the categories of Kirby tangles $\KTan$ and of (signed) top tangles in handlebodies $\ETTH$ and $\TTH$, then defining functors between them, and finally showing that the following diagram commutes:
\begin{center} 
 \begin{tikzpicture}[descr/.style={fill=white}]
  \node (P3) at (180:{2.5}) {$\KTan$};
  \node (P4) at (0,0) {$\ETTH$};
  \node (P5) at (0:{2.5}) {$\TTH$};
  \node (P6) at (225:{2.5*sqrt(2)}) {$\RHB$};
  \node (P7) at (270:{2.5}) {$\FRCob$};
  \node (P8) at (315:{2.5*sqrt(2)}) {$\RCob$};
  \draw
  (P3) edge[->>] node[above] {$E$} (P4)
  (P4) edge[->>] node[above] {$\FrgtT$} (P5)
  (P3) edge[->] node[left] {$D$} node[right] {\rotatebox{270}{$\cong$}} (P6)
  (P4) edge[->] node[left] {$\chi^\sigma$} node[right] {\rotatebox{270}{$\cong$}} (P7)
  (P5) edge[->] node[left] {$\chi$} node[right] {\rotatebox{270}{$\cong$}} (P8)
  (P6) edge[->>] node[above] {$\partial$} (P7)
  (P7) edge[->>] node[above] {$\FrgtC$} (P8);
 \end{tikzpicture}
\end{center}

\subsection{Kirby tangles}\label{S:Kirby_tangles} First, we recall the definition of the category of Kirby tangles $\KTan$ considered in \cite[Section~2]{BP11}. For all $m,m' \in \N$, a \textit{Kirby $(m,m')$-tan\-gle} $T$, sometimes simply called a \textit{Kirby tangle}, is a framed unoriented tangle in $I^3$ whose set of boundary points is composed of $2m$ points uniformly distributed on the bottom line $I \times \{ \frac 12 \} \times \{ 0 \} \subset I^3$ and $2m'$ points uniformly distributed on the top line $I \times \{ \frac 12 \} \times \{ 1 \} \subset I^3$, and whose components are of two types:
\begin{itemize}
 \item Dotted $0$-framed unknots;
 \item Undotted blackboard-framed components transversal to Seifert discs of dotted unknots, and satisfying the condition that, for all $0 \leqs i \leqs m$ and $0 \leqs i' \leqs m'$, there exists an undotted component joining the $(2i)$th and the $(2i-1)$th incoming boundary points, and another one joining the $(2i')$th and the $(2i'-1)$th outgoing ones.
\end{itemize}
Kirby tangles should be understood as describing 4-di\-men\-sion\-al 2-han\-dle\-bod\-ies. Dotted unknots represent 1-han\-dles, whose attaching tubes, provided by disjoint pairs of 3-balls, are thought to be squeezed onto the corresponding Seifert discs, while undotted knots represent attaching spheres of 2-han\-dles. A \textit{$2$-de\-for\-ma\-tion} of a Kirby tangle $T$ is a finite sequence of isotopies and moves of the following type:
\begin{align*}
 \pic{rel_D1a} &\leftrightsquigarrow \pic{rel_D1b} \tag{D$1$}\label{E:rel_D1} \\*
 \pic{rel_D2a} &\leftrightsquigarrow \pic{rel_D2b} \tag{D$2$}\label{E:rel_D2}
\end{align*}
Move~\eqref{E:rel_D1} corresponds to the slide of a $2$-han\-dle over another $2$-han\-dle, while move~\eqref{E:rel_D2} to the creation/removal of a canceling pair of $1/2$-handles. These operations are performed inside a solid torus $S^1 \times D^2$ embedded into $I^3$, and they leave Kirby tangles unchanged in the complement. Notice that the slide of a $1/2$-han\-dle \textit{under} a $1$-han\-dle can be deduced from these moves, see \cite[Figure~2.2.9]{BP11}.

The \textit{category $\KTan$ of Kirby tangles} is defined as follows:
\begin{itemize}
 \item Objects of $\KTan$ are natural numbers $m \in \N$.
 \item Morphisms of $\KTan$ from $m$ to $m'$ are Kirby $(m,m')$-tan\-gles $T$ modulo $2$-de\-for\-ma\-tions.
 \item The identity $\id_m \in \KTan(m,m)$ of an object $m \in \KTan$ is given by\footnote{We could generalize the definition of Kirby tangles by allowing \textit{through strands}, like in \cite[Section~3.1]{K01}, and by adding \textit{boundary moves} to the set of relations, like in \cite[Equation~(9)]{K01}. In this case, the identity of an object $m \in \KTan$ could be equivalently represented by the standard identity tangle with $2m$ strands.}
  \[
   \id_m := \pic{identity_KTan}
  \]
 \item The composition 
  \[
   T' \circ T \in \KTan(m,m'')
  \]
  of morphisms $T \in \KTan(m,m')$, $T' \in \KTan(m',m'')$ is given by vertical gluing.
\end{itemize}
The category $\KTan$ can be given the structure of a braided monoidal category:
\begin{itemize}
 \item The tensor product 
  \[
   m \otimes m' \in \KTan
  \]
  of objects $m,m' \in \KTan$ is given by the sum $m + m'$. 
 \item The tensor product 
  \[
   T \otimes T' \in \KTan(m \otimes m',m'' \otimes m''')
  \]
  of morphisms $T \in \KTan(m,m'')$, $T' \in \KTan(m',m''')$ is given by 
  \[
   T \disjun T',
  \]
  where $T \disjun T'$ is obtained by gluing horizontally the two copies of $I^3$, identifying the right side of the left one with the left side of the right one as prescribed by the identity map, and then by shrinking the result into $I^3$.
 \item The braiding $c_{m,m'} \in \KTan(m \otimes m',m' \otimes m)$ of objects $m,m' \in \KTan$ is given by
  \[
   c_{m,m'} := \pic[-12.5]{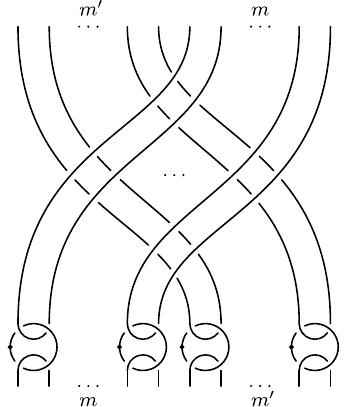}
  \]
\end{itemize}
Note that, up to 2-de\-for\-ma\-tion, we can always pre-compose any Kirby $(m,m')$-tan\-gle $T$ with $\id_m$ and post-compose it with $\id_{m'}$.

\subsection{Signed top tangles in handlebodies}\label{S:signed_top_tangles} In this section, we define a category $\ETTH$, which we call the \textit{category of signed top tangles in handlebodies}. This category is related to the category of bottom tangles in handlebodies defined by Habiro in \cite[Section~14.4]{H05}, which can be realized as a subquotient of $\ETTH$. The change in terminology is a consequence of a change in conventions. Indeed Habiro, like Kerler \cite{K01} and Asaeda \cite{A11}, reads morphisms from top to bottom, while we read them from bottom to top, like Bobtcheva and Piergallini \cite{BP11}. This will exchange the role of bottom tangles and top tangles. For all $m,m' \in \N$, a \textit{top $(m,m')$-tangle} $T$, sometimes simply called a \textit{top tangle}, is an unoriented framed tangle inside the standard connected dual 3-di\-men\-sion\-al 1-han\-dle\-bod\-y $H_m^*$ introduced in Section~\ref{S:relative_handlebodies_and_cobordisms}. Its set of boundary points is composed of $2m'$ points uniformly distributed on the top line $I \times \{ \frac 12 \} \times \{ 1 \} \subset H_m^*$. For every $0 \leqs j \leqs m$, a component of $T$ joins the $(2j)$th and the $(2j-1)$th boundary points. If $T$ is a top tangle and $\sig \in \Z$ is an integer, then a \textit{signed Kirby move} is a modification of the pair $(T,\sig)$ of the following type:
\begin{align*}
 \left( \pic{rel_K-1a},\sig \right) &\leftrightsquigarrow \left( \pic{rel_K1b},\sig-1 \right) \tag{K$-1$}\label{E:rel_K-1} \\*
 \left( \pic{rel_K+1a},\sig \right) &\leftrightsquigarrow \left( \pic{rel_K1b},\sig+1 \right) \tag{K$+1$}\label{E:rel_K+1} \\
 \left( \pic{rel_K2a},\sig \right) &\leftrightsquigarrow \left( \pic{rel_K2b},\sig \right) \tag{K$2$}\label{E:rel_K2} \\*
 \left( \pic{rel_K3a},\sig \right) &\leftrightsquigarrow \left( \pic{rel_K3b},\sig \right) \tag{K$3$}\label{E:rel_K3}
\end{align*}
Moves~\eqref{E:rel_K-1} and \eqref{E:rel_K+1} correspond to (de)stabilizations by $\bbC P^2$ and $\overline{\bbC P^2}$ respectively. These operations are performed inside a ball $D^3$ embedded into $H_m^*$, while the remaining ones are performed inside a solid torus $S^1 \times D^2$ embedded into $H_m^*$.

The \textit{category $\ETTH$ of signed top tangles in handlebodies} is the category defined as follows: 
\begin{itemize}
 \item Objects of $\ETTH$ are natural numbers $m \in \N$.
 \item Morphisms of $\ETTH$ from $m \in \N$ to $m' \in \N$ are pairs $(T,\sig)$, where $T$ is a top $(m,m')$-tangle and $\sig \in \Z$ is an integer called the \textit{signature defect}, modulo isotopies and signed Kirby moves.
 \item The identity $\id_m \in \ETTH(m,m)$ of an object $m \in \ETTH$ is given by
  \[
   \id_m := \left( \pic{identity_TTH}, 0 \right).
  \]
 \item The composition 
  \[
   (T',\sig') \circ (T,\sig) \in \ETTH(m,m'')
  \]
  of morphisms $T \in \ETTH(m,m')$, $T' \in \ETTH(m',m'')$ is given by 
  \[
   (T' \circ T,\sig+\sig'),
  \]
  where $T' \circ T$ is obtained by considering an open tubular neighborhood $N(\tilde{T})$ in $H_m^*$ of the subtangle $\tilde{T}$ of $T$ composed of all arc components, by gluing vertically its complement $H_m^* \smallsetminus N(\tilde{T})$ to $H_{m'}^*$, identifying the top base of $H_m^* \smallsetminus N(\tilde{T})$ with the bottom base of $H_{m'}^*$ as prescribed by the top tangle $\tilde{T}$, and then by shrinking the result into $H_m^*$.
\end{itemize}
The category $\ETTH$ can be given the structure of a braided monoidal category:
\begin{itemize}
 \item The tensor product 
  \[
   m \otimes m' \in \ETTH
  \]
  of objects $m,m' \in \ETTH$ is given by the sum $m + m'$. 
 \item The tensor product 
  \[
   (T,\sig) \otimes (T',\sig') \in \ETTH(m \otimes m',m'' \otimes m''')
  \]
  of morphisms $(T,\sig) \in \ETTH(m,m'')$, $(T',\sig') \in \ETTH(m',m''')$ is given by
  \[
   (T \disjun T',\sig+\sig'),
  \]
  where $T \disjun T'$ is obtained by gluing horizontally $H_m^*$ to $H_{m'}^*$, identifying the right side of $H_m^*$ with the left side of $H_{m'}^*$ as prescribed by the identity map, and then by shrinking the result into $H_{m+m'}^*$.
 \item The braiding $c_{m,m'} \in \ETTH(m \otimes m',m' \otimes m)$ of objects $m,m' \in \ETTH$ is given by
  \[
   c_{m,m'} := \left( \pic{braiding_TTH},0 \right)
  \]
\end{itemize}

Finally, the \textit{category $\TTH$ of top tangles in handlebodies} is simply obtained from $\ETTH$ by forgetting signature defects.

\subsection{Handle and surgery presentation functors}\label{S:handle_surgery_functors} We consider the \textit{eraser} functor 
\[
 E : \KTan \to \ETTH
\]
given by the identity on objects, and sending every Kirby tangle $T$ in $\KTan(m,m')$ to the pair $(H_m(T),0)$ in $\ETTH(m,m')$, where the top tangle $H_m(T)$ is obtained by erasing dots (an operation which corresponds to 1/2-han\-dle trading) and by replacing the bottom copy of $\id_m$ as shown
\[
 \pic{boundary_KTan} \rightsquigarrow \pic{boundary_TTH}
\]

\begin{proposition}
 The eraser functor $E : \KTan \to \ETTH$ is surjective on objects and full.
\end{proposition}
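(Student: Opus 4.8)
The plan is to establish the two properties separately, each by exhibiting an explicit construction. For essential surjectivity: since $E$ is the identity on objects and every object of $\ETTH$ is a natural number $m$, which is also an object of $\KTan$, essential surjectivity is immediate (indeed $E$ is literally surjective on objects). The substance of the proposition is therefore fullness, which I would prove by producing, for every morphism $(T,\sig)$ in $\ETTH(m,m')$, a Kirby tangle $\widetilde{T}$ in $\KTan(m,m')$ with $E(\widetilde{T}) = (T,\sig)$.

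First I would reduce to the case $\sig = 0$. The signed Kirby moves \eqref{E:rel_K-1} and \eqref{E:rel_K+1} let one trade a unit of signature defect for a small $\pm 1$-framed meridian (an undotted component) added to the tangle; conversely, in $\KTan$ such a small unknot can be slid off or created via the corresponding handle moves among the $2$-deformation relations. So it suffices to lift top tangles with zero signature defect. Given such a $T \subset H_m^*$, I would reverse the recipe defining $E$: reintroduce $m$ dotted unknots encircling the cores of the $1$-handles of $H_m^*$ (this is the inverse of the dot-erasing / $1$-$2$-handle trading operation), replace the bottom configuration of arcs by the standard copy of $\id_m$ with the appropriate clasps as displayed in the picture defining $E$, and keep the remaining (formerly closed) components of $T$ as undotted components. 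One must check that the resulting diagram genuinely satisfies the axioms of a Kirby $(m,m')$-tangle — in particular that the undotted components are transversal to the Seifert discs of the newly added dotted unknots, which holds because $T$ sits inside the complement $H_m^*$ of those discs by construction — and that applying $E$ to it returns $(T,0)$ on the nose.

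The main obstacle, and the step deserving the most care, is \textbf{well-definedness of the lift on $2$-deformation / signed Kirby move classes}: I must verify that if $(T,\sig)$ and $(T',\sig')$ represent the same morphism of $\ETTH$, then the chosen Kirby-tangle lifts are $2$-deformation equivalent in $\KTan$. This amounts to checking that each generating move on the $\ETTH$ side — isotopy, the framing moves \eqref{E:rel_K-1}/\eqref{E:rel_K+1}, and the handle-slide-type moves \eqref{E:rel_K2}/\eqref{E:rel_K3} — is realized, after reintroducing the dots, by a finite sequence of isotopies and the Kirby-tangle relations \eqref{E:rel_D1}, \eqref{E:rel_D2} (together with the standard framed-tangle moves). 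For \eqref{E:rel_K2} and \eqref{E:rel_K3}, which take place inside an embedded solid torus, the corresponding manipulation inside the dotted picture is exactly an instance of sliding an undotted strand, handled by \eqref{E:rel_D1}–\eqref{E:rel_D2}; for the framing moves, the translation is the $\sig\mapsto\sig\mp 1$ bookkeeping described above. Once this compatibility is in place, the assignment $(T,\sig)\mapsto \widetilde{T}$ descends to a well-defined map on morphisms that is a section of $E$ on each hom-set, which gives fullness and completes the proof. (It is not claimed, and I would not try to show, that $E$ is faithful — it is only a projection.)
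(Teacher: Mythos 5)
Your handling of essential surjectivity and the two-step strategy for fullness — first trade each unit of signature defect for a split unknot with framing $\pm 1$ via \eqref{E:rel_K-1}/\eqref{E:rel_K+1} so as to reduce to $\sig=0$, then lift a top tangle with zero defect by re-inserting dotted unknots around the cores of the $1$-handles of $H_m^*$ and keeping the closed components undotted — is exactly the paper's argument (the paper's $O_\pm$ are your $\pm1$-framed unknots, and the ``reverse the eraser'' step, which the paper leaves implicit, you spell out correctly). Up to that point the proof is fine and, if anything, more explicit than the published one.

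The problem is the step you single out as the main obstacle: well-definedness of the lift on equivalence classes. First, this is not needed for fullness. Fullness only asks that every morphism of $\ETTH(m,m')$ admit \emph{some} preimage: pick any representative $(T,\sig)$, normalize, lift it to a Kirby tangle $\widetilde{T}$, and check $E(\widetilde{T})\sim(T,\sig)$; no compatibility between the lifts of different representatives is required. Second, and more seriously, the compatibility you propose to verify is false, and its failure is precisely the point of the whole construction: the moves \eqref{E:rel_K-1} and \eqref{E:rel_K+1} do \emph{not} lift to $2$-deformations, even after your signature bookkeeping. Concretely, $(\varnothing,0)$ and the pair consisting of a split $(+1)$-framed unknot together with a split $(-1)$-framed unknot, with defect $0$, represent the same morphism of $\ETTH(0,0)$ (apply \eqref{E:rel_K+1} and then \eqref{E:rel_K-1}), but their lifts are the empty Kirby tangle, representing $D^4$, and a two-component undotted unlink with framings $\pm1$, representing $(\bbC P^2 \# \overline{\bbC P^2})\smallsetminus \mathring{D}^4$; these are not $2$-equivalent since they are not even homotopy equivalent. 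This non-liftability is exactly why $E$ is a proper projection and why $\RHB$ can carry more information than the $3$-dimensional boundary plus the signature. So the sentence ``Once this compatibility is in place\dots'' makes your proof rest on an unprovable claim; delete that step and conclude fullness directly from the existence of a preimage for each chosen representative.
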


\begin{proof}
 The fact that the eraser functor $E$ is well-defined follows immediately from the fact that 2-de\-for\-ma\-tions of Kirby tangles \eqref{E:rel_D1}-\eqref{E:rel_D2} project to signed Kirby moves of top tangles \eqref{E:rel_K2}-\eqref{E:rel_K3}. The fact that $E$ is surjective on objects is obvious. The fact that it is also full follows from the fact that if $T$ is a Kirby tangle in $\KTan(m,m')$, then any signed top tangle of the form $(H_m(T),\sig)$ with $\sig \leqs 0$ can be obtained from $(H_m(T \otimes O_-^{\otimes \sig}),0)$ by a sequence of $-\sig$ signed Kirby moves \eqref{E:rel_K-1}, and similarly any signed top tangle of the form $(H_m(T),\sig)$ with $\sig \geqs 0$ can be obtained from $(H_m(T \otimes O_+^{\otimes \sig}),0)$ by a sequence of $\sig$ signed Kirby moves \eqref{E:rel_K+1}, where
 \begin{align*}
  O_- &:= \pic{stabilization_-} &
  O_+ &:= \pic{stabilization_+} \qedhere
 \end{align*}
\end{proof}

We also have a \textit{forgetful} functor 
\[
 \FrgtT : \ETTH \to \TTH
\]
given by the identity on objects, and sending every pair $(T,\sig)$ in $\ETTH(m,m')$ to the top tangle $T$.

Next, there is a \textit{handle decomposition} functor 
\[
 D : \KTan \to \RHB
\]
given by the identity on objects, and sending every Kirby tangle $T$ in $\KTan(m,m')$ to the handlebody in $\RHB(m,m')$ obtained by attaching 1-handles with cocores determined by Seifert discs of dotted unknots (or equivalently by removing complementary 2-handles with belt spheres determined by dotted unknots, which is the same) and attaching 2-handles along undotted components according to framings. The functor $D$ is an equivalence, see \cite[Proposition~2.3.1]{BP11}.

We consider now the \textit{signed surgery presentation} functor 
\[
 \chi^\sigma : \ETTH \to \FRCob
\]
given by the identity on objects, and sending every pair $(T,\sig)$ in $\ETTH(m,m')$ to the pair $(C(T),\sigma(T \smallsetminus \tilde{T})+\sig)$ in $\FRCob(m,m')$, where $C(T)$ is the cobordism obtained from $H_m$ by carving out an open tubular neighborhood $N(\tilde{T})$ in $H_m$ of the subtangle $\tilde{T}$ of $T$ composed of all arc components, and by performing 2-surgery along the subtangle $T \smallsetminus \tilde{T}$ composed of all circle components, and where $\sigma(T \smallsetminus \tilde{T})$ is the signature of the linking matrix of $(T \smallsetminus \tilde{T}) \subset H_m \subset \R^3$.

\begin{proposition}\label{P:surgery_equivalence}
 The signed surgery presentation functor $\chi^\sigma : \ETTH \to \FRCob$ is an equivalence.
\end{proposition}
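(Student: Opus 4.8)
The plan is to verify the three defining properties of an equivalence. Essential surjectivity is free, since $\chi^\sigma$ is the identity on objects; so the content is that $\chi^\sigma$ is fully faithful, and I would deduce this from two inputs layered on top of one another. The first is an \emph{unframed} statement about the underlying relative cobordisms: every connected relative cobordism from $\varSigma_{m,1}$ to $\varSigma_{m',1}$ is diffeomorphic, compatibly with the boundary parametrizations, to $C(T)$ for some top $(m,m')$-tan\-gle $T$, and two top tangles $T_1$, $T_2$ satisfy $C(T_1) \cong C(T_2)$ if and only if they are related by a finite sequence of isotopies and of the moves \eqref{E:rel_K-1}--\eqref{E:rel_K3} with the signature defects discarded. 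This is Kerler's tangle presentation of relative cobordism categories \cite{K96}, its framed sharpening appearing in \cite{K01}: in his ``bridged link'' model the arc components $\tilde{T}$ are the bridges and account for the genus of the outgoing surface, while the circle components carry the surgery link. The most laborious step of the proof is to match Kerler's conventions with ours — the opposite reading direction for morphisms, and his ``through strands'' versus our arc components — but this is routine.

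Granting the unframed statement, fullness of $\chi^\sigma$ is immediate: given $(M,\sig) \in \FRCob(m,m')$, choose $T$ with $C(T) \cong M$ and set $\sig' := \sig - \sigma(T \smallsetminus \tilde{T})$; then $\chi^\sigma(T,\sig') = (M,\sig)$, the defect being a free integer parameter.

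For faithfulness, suppose $\chi^\sigma(T_1,\sig_1) = \chi^\sigma(T_2,\sig_2)$, so that $C(T_1) \cong C(T_2)$ and
\[
 s := \sigma(T_1 \smallsetminus \tilde{T}_1) + \sig_1 = \sigma(T_2 \smallsetminus \tilde{T}_2) + \sig_2 .
\]
By the unframed statement, choose a chain $T_1 = S_0 \leftrightsquigarrow \cdots \leftrightsquigarrow S_N = T_2$ of isotopies and moves \eqref{E:rel_K-1}--\eqref{E:rel_K3}, and set $\tau_k := s - \sigma(S_k \smallsetminus \tilde{S}_k)$, so that $\tau_0 = \sig_1$ and $\tau_N = \sig_2$. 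An isotopy or a move \eqref{E:rel_K2} or \eqref{E:rel_K3} does not change the number of circle components and modifies their linking matrix only by a congruence, hence preserves its signature $\sigma(S_k \smallsetminus \tilde{S}_k)$ and therefore $\tau_k$; such a step is thus the corresponding signed move with the defect unchanged. A move \eqref{E:rel_K-1} or \eqref{E:rel_K+1} deletes or creates a disjoint $\pm 1$-framed unknot, changing $\sigma(S_k \smallsetminus \tilde{S}_k)$ by $\pm 1$ and hence $\tau_k$ by $\mp 1$; checking the definitions, the step $(S_k,\tau_k) \leftrightsquigarrow (S_{k+1},\tau_{k+1})$ is then precisely a move of type \eqref{E:rel_K-1} or \eqref{E:rel_K+1}. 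Concatenating, $(T_1,\sig_1)$ and $(T_2,\sig_2)$ are related by signed Kirby moves, which is faithfulness.

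The one genuinely substantial point — which belongs to the verification that $\chi^\sigma$ is a well-defined monoidal functor, and which I expect to be the main obstacle — is the compatibility of the defect formula with composition in $\FRCob$, whose correction term is a Maslov index. Unwinding the definitions, this is the identity
\[
 \sigma\bigl( (T' \circ T) \smallsetminus \widetilde{T' \circ T} \bigr) = \sigma(T \smallsetminus \tilde{T}) + \sigma(T' \smallsetminus \tilde{T}') - \mu\bigl( C(T)_*(\calL_m), \calL_{m'}, C(T')^*(\calL_{m''}) \bigr),
\]
which is an instance of Wall's non-additivity theorem \cite{W69} applied to the $4$-man\-i\-fold obtained by stacking the two surgery traces along $\varSigma_{m',1}$; the same decomposition identifies $C(T' \circ T)$ with $C(T) \cup_{\varSigma_{m',1}} C(T')$. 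The identities, the monoidal product, and the braiding are matched on the nose by the definition of $\chi^\sigma$.
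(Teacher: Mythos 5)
Your proof is correct and follows essentially the same route as the paper's: everything is reduced to the relative Kirby calculus theorem for surgery presentations of cobordisms between parametrized surfaces (the paper cites \cite[Theorem~1]{R97} for this where you cite Kerler \cite{K96,K01}, but it is the same input). The extra work you do --- deriving the \emph{signed} move equivalence from the unframed one by tracking the linking-matrix signature through the chain of moves, and checking compatibility of the signature defect with composition via Wall non-additivity \cite{W69} --- is sound and makes explicit two points that the paper's three-line proof simply asserts or leaves to the definition of $\chi^\sigma$.
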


\begin{proof}
 The fact that $\chi^\sigma$ is surjective on objects is obvious. Furthermore, every cobordism can be obtained from 
 \[
  C \left( \pic{trivial_TTan} \right)
 \]
 by index 2 surgery along a framed link. This shows that $\chi^\sigma$ is full. Finally, two pairs $(T,\sig)$ and $(T',\sig')$ in $\ETTH$ determine diffeomorphic cobordisms under $\chi^\sigma$ if and only if they are related by a finite sequence of signed Kirby moves~\eqref{E:rel_K-1}-\eqref{E:rel_K3}, as follows from \cite[Theorem~1]{R97}. This shows $\chi^\sigma$ is faithful.
\end{proof}

In particular, we also have a \textit{surgery presentation} functor 
\[ 
 \chi : \TTH \to \RCob
\]
given by the identity on objects, and sending every top tangle $T$ in $\TTH(m,m')$ to the cobordism $C(T)$ in $\RCob(m,m')$. The proof of Proposition~\ref{P:surgery_equivalence} immediately implies the following result.

\begin{proposition}
 The surgery presentation functor $\chi : \TTH \to \RCob$ is an equivalence.
\end{proposition}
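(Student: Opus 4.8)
The plan is to read off everything from Proposition~\ref{P:surgery_equivalence} by means of the commuting square formed by $\chi^\sigma$, $\chi$, and the two signature-forgetting functors $\FrgtT : \ETTH \to \TTH$ and $\FrgtC : \FRCob \to \RCob$; this square commutes because $\chi^\sigma(T,\sig) = (C(T),\sigma(T \smallsetminus \tilde{T})+\sig)$ has underlying relative cobordism $C(T) = \chi(T)$. Essential surjectivity of $\chi$ is immediate, since $\chi$ is the identity on objects and $\RCob$, $\TTH$ share the same objects. Fullness is proved exactly as for $\chi^\sigma$ in Proposition~\ref{P:surgery_equivalence}: any relative cobordism in $\RCob(m,m')$ is obtained by index~$2$ surgery along a framed link in $H_m$ from the image under $C$ of a top tangle with only trivial arc components, and that framed link, together with the arc components, assembles into a top $(m,m')$-tangle mapping to the given cobordism under $\chi$.

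The only point requiring care is faithfulness. Given top $(m,m')$-tangles $T$ and $T'$ with $C(T) \cong C(T')$ in $\RCob$, I would first pick integers $\sig, \sig' \in \Z$ with $\sigma(T \smallsetminus \tilde{T})+\sig = \sigma(T' \smallsetminus \tilde{T}')+\sig'$, which is possible because only the difference $\sig-\sig'$ is constrained. With this choice, $\chi^\sigma(T,\sig)$ and $\chi^\sigma(T',\sig')$ agree as morphisms of $\FRCob$: their underlying cobordisms are diffeomorphic by hypothesis, and their signature defects coincide by construction. Faithfulness of $\chi^\sigma$ then provides a finite sequence of isotopies and signed Kirby moves~\eqref{E:rel_K-1}--\eqref{E:rel_K3} relating $(T,\sig)$ to $(T',\sig')$ in $\ETTH$. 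Applying the functor $\FrgtT$, which merely deletes signature defects, transforms this sequence into a sequence of isotopies and Kirby moves in $\TTH$ relating $T$ to $T'$: moves~\eqref{E:rel_K2} and~\eqref{E:rel_K3} are unchanged, while~\eqref{E:rel_K-1} and~\eqref{E:rel_K+1} become the addition or removal of an isolated $\pm 1$-framed circle component, which is one of the defining moves of $\TTH$. Hence $T = T'$ in $\TTH$, so $\chi$ is faithful, and together with the above this shows $\chi$ is an equivalence.

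I do not expect a genuine obstacle here, as all the content already resides in Proposition~\ref{P:surgery_equivalence}; the only slightly delicate bookkeeping is the signature-defect adjustment in the faithfulness step, where one exploits the freedom to shift $\sig$ by inserting or removing isolated $\pm 1$-framed unknots in order to upgrade a diffeomorphism of underlying cobordisms in $\RCob$ to an equality of pairs in $\FRCob$. One should also keep in mind that $\FrgtT$ is already established to be a functor, so transporting the connecting sequence of moves along it is legitimate.
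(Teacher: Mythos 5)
Your proposal is correct and matches the paper's intent: the paper simply states that the proof of Proposition~\ref{P:surgery_equivalence} immediately implies this result, and your argument is exactly that deduction spelled out, with the signature-defect adjustment in the faithfulness step being the only (correctly handled) piece of bookkeeping. No issues.
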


\section{Algebraic preliminaries}

In this section we summarize the terminology needed in order to understand the target of the braided monoidal functor of Theorem~\ref{T:main_4-dim}.

\subsection{Unimodular and factorizable ribbon categories}\label{S:unimodular_factorizable_categories} Following \cite[Definition~1.8.5]{EGNO15}, a \textit{finite category} is a linear category $\calC$ over $\Bbbk$ which is equivalent to the category $\mods{A}$ of finite dimensional left $A$-modules for a finite dimensional algebra $A$ over $\Bbbk$. An equivalent and more explicit characterization is provided by \cite[Definition~1.8.6]{EGNO15}. A finite category $\calC$ has enough projectives, which means every object $X \in \calC$ admits a projective cover $P_X \in \calC$ equipped with a projection morphism $\varepsilon_X : P_X \to X$.

A \textit{ribbon category} is a braided rigid monoidal category $\calC$ equipped with a ribbon structure. The monoidal structure, which we always assume to be strict by invoking \cite[Theorem~XI.3.1]{M71}, is given by a tensor product $\otimes : \calC \times \calC \to \calC$ and a tensor unit $\one \in \calC$. Rigidity of $\calC$ ensures the existence of two-sided duals, and this allows us to fix dinatural families of left and right evaluation and coevaluation morphisms $\lev_X : X^* \otimes X \to \one$, $\lcoev_X : \one \to X \otimes X^*$, $\rev_X : X \otimes X^* \to \one$, and $\rcoev_X : \one \to X^* \otimes X$ for every $X \in \calC$. The braided structure is given by a natural family of braiding isomorphisms $c_{X,Y} : X \otimes Y \to Y \otimes X$ for all $X,Y \in \calC$. Finally, the ribbon structure is given by a natural family of twist isomorphisms $\vartheta_X : X \to X$ for every $X \in \calC$. These data must satisfy several axioms, which can be found in \cite[Sections~2.1, 2.10, 8.1, 8.10]{EGNO15}. A ribbon category is always pivotal, meaning it admits a natural family of pivotal isomorphisms $\varphi_X : X \to X^{**}$ for every $X \in \calC$, see \cite[Proposition~8.10.6]{EGNO15}. Since every pivotal category is equivalent to a strict one, as proved in \cite[Theorem~2.2]{NS05}, we will always implicitly assume that $\varphi_X = \id_X$ for every $X \in \calC$.

A rigid category $\calC$ is \textit{unimodular} if it is finite and $P_{\one}^* \cong P_{\one}$, compare with \cite[Definition~6.5.7]{EGNO15}.

An object $X \in \calC$ of a braided monoidal category is said to be transparent if it belongs to the Müger center $M(\calC)$ of $\calC$, which means its double braiding satisfies $c_{Y,X} \circ c_{X,Y} = \id_{X \otimes Y}$ for every object $Y \in \calC$. A unimodular ribbon category $\calC$ is \textit{factorizable} if all its transparent objects are trivial, meaning that they are isomorphic to direct sums of copies of the tensor unit $\one$. A factorizable ribbon category $\calC$ is sometimes also called a \textit{modular} category. The definition given above is equivalent to several others, as proved in \cite[Theorem~1.1]{S16}. In Section~\ref{S:end} we will give yet another equivalent modularity condition.

\subsection{Unimodular and factorizable ribbon Hopf algebras}\label{S:Hopf_algebras} A relevant family of examples of unimodular ribbon categories is provided by categories of representations of unimodular ribbon Hopf algebras. Indeed, let $H$ be a Hopf algebra over a field $\Bbbk$, with unit $\eta : \Bbbk \to H$, product $\mu : H \otimes H \to H$, counit $\varepsilon : H \to \Bbbk$, coproduct $\Delta : H \to H \otimes H$, and antipode $S : H \to H$. For all elements $x,y \in H$, we use the following short notations: for the product, $\mu(x \otimes y) = xy$, for the unit, $\eta(1) = 1$, and for the coproduct, $\Delta(x) = x_{(1)} \otimes x_{(2)}$ (following Sweedler's notation, which hides a sum). Recall that, if $H$ is finite-dimensional, then the antipode $S$ is invertible, as proved in \cite[Theorem~7.1.14]{R12}.

A \textit{ribbon structure} on $H$ is given by an R-matrix $R = R'_i \otimes R''_i \in H \otimes H$ (which hides a sum) and a ribbon element $v_+ \in H$, see \cite[Definitions~VIII.2.2. \& XIV.6.1]{K95}. We denote with $u \in H$ the Drinfeld element and with $M \in H \otimes H$ the M-matrix associated with the R-matrix $R$, which are defined by $u = S(R''_i)R'_i$ and by $M = R''_j R'_i \otimes R'_j R''_i$ respectively (with sums hidden in both notations). We also denote with $v_- \in H$ the inverse of the ribbon element and with $g \in H$ the unique pivotal element compatible with $v_+$, meaning it satisfies $g = uv_-$.

A left integral $\lambda \in H^*$ of $H$ is a linear form on $H$ satisfying $\lambda(x_{(2)}) x_{(1)} = \lambda(x) 1$ for every $x \in H$, and a left cointegral $\Lambda \in H$ of $H$ is an element of $H$ satisfying $x \Lambda = \varepsilon(x) \Lambda$ for every $x \in H$, see \cite[Definition~10.1.1 \& 10.1.2]{R12}. Recall that, if $H$ is finite-dimensional, then a left integral and a left cointegral exist, they are unique up to scalar, and we can lock together their normalizations by requiring
\[
 \lambda(\Lambda) = 1,
\]
as follows from \cite[Theorem~10.2.2]{R12}. Now, recall that a finite-dimensional Hopf algebra $H$ is \textit{unimodular} if $S(\Lambda) = \Lambda$, compare with \cite[Definition~10.2.3]{R12}. If $H$ is unimodular, \cite[Theorem~10.5.4.(e)]{R12} implies that the left integral $\lambda$ satisfies
\[
 \lambda(xy) = \lambda(yS^2(x))
\]
for all $x,y \in H$, see also \cite[Theorem~7.18.12]{EGNO15}. If a ribbon Hopf algebra $H$ is unimodular then, as a consequence of \cite[Proposition~4.1]{H96}, $H$ is also unibalanced, which means that $\lambda(x_{(1)}) x_{(2)} = \lambda(x) g^2$ for every $x \in H$, where $g \in H$ is the pivotal element. Furthermore, the category $\mods{H}$ of finite-di\-men\-sion\-al left $H$-mod\-ules is a unimodular ribbon category thanks to \cite[Lemma~2.5]{L97}.

The Drinfeld map $D : H^* \to H$ of a ribbon Hopf algebra $H$ is the linear map determined by $D(\varphi) := (\varphi \otimes \id_H)(M)$ for every $\varphi \in H^*$, where $M$ is the M-matrix of $H$. By definition, $H$ is \textit{factorizable} if $D$ is a linear isomorphism. This happens if and only if $\mods{H}$ is factorizable, as explained in \cite[Remark~7.7]{FGR17}.

\subsection{Braided Hopf algebras} As we recalled, a Hopf algebra is determined by an object and by several structure morphisms in $\Vect_\Bbbk$. Here, we recall a well-known extension of this definition to an arbitrary braided monoidal category $\calC$.

\begin{definition}\label{D:braided_Hopf}
 A \textit{braided Hopf algebra} in $\calC$ is an object $\calH \in \calC$, together with
 \begin{enumerate}
  \item a \textit{product} $\mu \in \calC(\calH \otimes \calH,\calH)$ and a \textit{unit} $\eta \in \calC(\one,\calH)$, represented as
   \[
    \pic{Hopf_algebra}
   \]
  \item a \textit{coproduct} $\Delta \in \calC(\calH,\calH \otimes \calH)$ and a \textit{counit} $\varepsilon \in \calC(\calH,\one)$, represented as
   \[
    \pic{Hopf_coalgebra}
   \]
  \item an \textit{antipode} and an \textit{inverse antipode} $S,S^{-1} \in \calC(\calH,\calH)$, represented as
   \[
    \pic{Hopf_antipode}
   \]
 \end{enumerate}
 satisfying the following axioms:
 \begin{enumerate}
  \item $\mu$ and $\eta$ provide an algebra structure
   \[
    \pic{Hopf_algebra_axiom}
   \]
  \item $\Delta$ and $\varepsilon$ provide a coalgebra structure
   \[
    \pic{Hopf_coalgebra_axiom}
   \]
  \item algebra and coalgebra structures are compatible
   \[
    \pic{Hopf_compatible_axiom}
   \]
  \item $S$ is the convolution-inverse of the identity and it is invertible
   \[
    \pic{Hopf_antipode_axiom}
   \]
 \end{enumerate}
\end{definition}

The antipode $S$ is automatically a bialgebra anti-morphism, meaning it satisfies
\[
 \pic{Hopf_antipode_anti-morphism}
\]
For a proof of this well-known fact, see for instance \cite[Lemma~1]{K01}. Most of the time, we will abusively denote braided Hopf algebras in $\calC$ simply by their underlying objects, without explicit mention to their structure morphisms.

\section{Bobtcheva--Piergallini Hopf algebras}

The goal of this section is to recall the algebraic presentation of the topological and diagrammatic categories of Sections~\ref{S:relative_handlebodies_and_cobordisms}-\ref{S:kirby_top_tangles}. This is done by defining three categories $\Algf$, $\tAlgt$, and $\Algt$ freely generated by BPH algebras (short for Bobtcheva--Piergallini Hopf algebras), and by extending the commutative diagram we had before as follows:
\begin{center} 
 \begin{tikzpicture}[descr/.style={fill=white}]
  \node (P0) at (135:{2.5*sqrt(2)}) {$\Algf$};
  \node (P1) at (90:{2.5}) {$\tAlgt$};
  \node (P2) at (45:{2.5*sqrt(2)}) {$\Algt$};
  \node (P3) at (180:{2.5}) {$\KTan$};
  \node (P4) at (0,0) {$\ETTH$};
  \node (P5) at (0:{2.5}) {$\TTH$};
  \node (P6) at (225:{2.5*sqrt(2)}) {$\RHB$};
  \node (P7) at (270:{2.5}) {$\FRCob$};
  \node (P8) at (315:{2.5*sqrt(2)}) {$\RCob$};
  \draw
  (P0) edge[->>] node[above] {$\tQtnt$} (P1)
  (P1) edge[->>] node[above] {$\Qtnt$} (P2)
  (P0) edge[->] node[left] {$K$} node[right] {\rotatebox{270}{$\cong$}} (P3)
  (P1) edge[->] node[left] {$T^\sigma$} node[right] {\rotatebox{270}{$\cong$}} (P4)
  (P2) edge[->] node[left] {$T$} node[right] {\rotatebox{270}{$\cong$}} (P5)  
  (P3) edge[->>] node[above] {$E$} (P4)
  (P4) edge[->>] node[above] {$\FrgtT$} (P5)
  (P3) edge[->] node[left] {$D$} node[right] {\rotatebox{270}{$\cong$}} (P6)
  (P4) edge[->] node[left] {$\chi^\sigma$} node[right] {\rotatebox{270}{$\cong$}} (P7)
  (P5) edge[->] node[left] {$\chi$} node[right] {\rotatebox{270}{$\cong$}} (P8)
  (P6) edge[->>] node[above] {$\partial$} (P7)
  (P7) edge[->>] node[above] {$\FrgtC$} (P8);
 \end{tikzpicture}
\end{center}

Our presentation is based on \cite[Section~4]
{K01} and \cite[Sections~4.7 \& 5.5]{BP11}, see also \cite[Sections~2.1 \& 2.4]{B20}.

\subsection{BPH algebras}\label{S:pre-modular_Hopf_agebras}

Let $\calC$ be a braided monoidal category.

\begin{definition}\label{D:ribbon_braided_Hopf}
 A \textit{BP-rib\-bon structure} on a braided Hopf algebra $\calH$ in $\calC$ is given by a \textit{ribbon}\footnote{Notice that the endomorphism appearing on both sides of the left-most equality of axiom~$(i)$ actually corresponds to $v^{-1}$ of \cite[Table~4.7.12]{BP11}. Our choice of signs is motivated by the usual conventions for standard Hopf algebras, as it will become appearent later.} and an \textit{inverse ribbon element} $v_+,v_- \in \calC(\one,\calH)$, represented as
 \[
  \pic{Hopf_ribbon}
 \]
 together with their induced \textit{copairings} $w_+, w_- \in \calC(\one,\calH \otimes \calH)$, defined as 
 \[
  \pic{Hopf_copairing}
 \]
 satisfying the following axioms:
 \begin{enumerate}
  \item $v_+$ is central, invertible, normalized, and antipode-invariant
   \[
    \pic{Hopf_ribbon_axiom}
   \]
  \item $w_+$ is a Hopf copairing
   \[
    \pic{Hopf_copairing_axiom}
   \]
  \item $v_+$ is compatible with the coproduct and $w_+$ with the braiding
   \[
    \pic{Hopf_ribbon_compatible_axiom}
   \]
 \end{enumerate}
\end{definition}

The inverse ribbon element $v_-$ automatically satisfies
\[
 \pic{Hopf_ribbon_axiom_inverse}
\]
Moreover, the negative copairing $w_-$ automatically satisfies
\[
 \pic{Hopf_copairing_negative}
\]
which means
\[
 \pic{Hopf_copairing_axiom_reflected}
\]
Indeed, this follows directly from \cite[Lemmas~3 \& 4]{K01}. The compatibility axioms $(iii)$ were first considered in \cite[Equations~(r8) \& (r9)]{BP11}. They can be reformulated as
\[
 \pic{Hopf_ribbon_compatible_axiom_reflected}
\]
see \cite[Equations~(p10) \& (p13)]{BP11}. We point out that there are also other possible equivalent forms that the axioms $(iii)$ of Definition~\ref{D:ribbon_braided_Hopf} can take. A convenient one uses the \textit{adjoint morphism} $\alpha \in \calC(\calH \otimes \calH,\calH)$, which is defined as
\[
 \pic{Hopf_adjoint}
\]
Then \cite[Proposition~4.4.5]{BP11} implies the equations
\[
 \pic{Hopf_ribbon_compatible_axiom_adjoint}
\]
are equivalent to the axioms $(iii)$ of Definition~\ref{D:ribbon_braided_Hopf}. Remark that BP-rib\-bon Hopf algebras are simply called ribbon Hopf algebras in \cite{BP11}. However, when $\calC = \Vect_\Bbbk$ with the standard symmetric braiding (given by transposition), a ribbon Hopf algebra in the standard sense is not a BP-rib\-bon Hopf algebra. Indeed, the copairing is not a Hopf copairing in general. This is why we change terminology here.

\begin{definition}\label{D:unimodular_braided_Hopf}
 A braided Hopf algebra $\calH \in \calC$ is \textit{BP-u\-ni\-mod\-u\-lar} if it admits an \textit{integral} $\lambda \in \calC(\calH,\one)$ and a \textit{cointegral} $\Lambda \in \calC(\one,\calH)$, represented as
 \[
  \pic{Hopf_integral}
 \]
 satisfying the following axioms:
 \begin{enumerate}
  \item $\lambda$ is a left coinvariant form which is antipode-invariant 
   \[
    \pic{Hopf_left_integral_axiom}
   \]
  \item $\Lambda$ is a left invariant element which is antipode-invariant 
   \[
    \pic{Hopf_left_cointegral_axiom}
   \]
  \item $\lambda$ and $\Lambda$ are normalized
   \[
    \pic{Hopf_integral_cointegral_axiom}
   \]
 \end{enumerate}
\end{definition}

The integral $\lambda$ and the cointegral $\Lambda$ of a BP-u\-ni\-mod\-u\-lar Hopf algebra are automatically two-sided, meaning they satisfy
\[
 \pic{Hopf_right_integral_cointegral}
\]
This is a direct consequence of their antipode-invariance. Remark that BP-u\-ni\-mod\-u\-lar Hopf algebras are simply called unimodular Hopf algebras in \cite{BP11}. However, when $\calC = \Vect_\Bbbk$, a unimodular Hopf algebra in the standard sense is not a BP-u\-ni\-mod\-u\-lar Hopf algebra. Indeed, left integrals are not antipode-invariant in general. This is why we change terminology here.

\begin{definition}\label{D:pre-modular}
 A \textit{BPH algebra} in $\calC$ is a BP-u\-ni\-mod\-u\-lar BP-rib\-bon Hopf algebra in $\calC$.
\end{definition}

We denote by $\HAf$ a BPH algebra, and by $\Algf$ the braided monoidal category freely generated by $\HAf$. This category was first introduced in \cite{BP11} under the name $\calH^r$.

\subsection{Factorizable BPH algebras}\label{S:modular_Hopf_algebras}

\begin{definition}\label{D:modular}
 A BPH algebra $\calH$ in $\calC$ is \textit{factorizable} if the copairing $w_+$ is non-de\-gen\-er\-ate, which means
 \[
  \pic{Hopf_modular_axiom}
 \]
\end{definition}

The copairing $w_+$ of a factorizable BPH algebra automatically satisfies also
\[
 \pic{Hopf_modular_axiom_reflected}
\]
A direct consequence of the non-de\-gen\-er\-a\-cy of $w_+$ is
\[
 \pic{Hopf_consequence_of_non-degeneracy}
\]
This condition is called \textit{twist non-de\-gen\-er\-a\-cy} in \cite{DGGPR19}.

We denote by $\tHAt$ a factorizable BPH algebra, and by $\tAlgt$ the braided monoidal category freely generated by $\tHAt$. This category was first introduced in \cite{BP11} under the name $\bar{\calH}^r$.

\begin{definition}\label{D:double_modular}
 A factorizable BPH algebra $\calH$ in $\calC$ is \textit{anomaly-free} if the ribbon element $v_+$ is normalized by
 \[
  \pic{Hopf_double_modular_axiom}
 \]
\end{definition}

The inverse ribbon element $v_-$ of an anomaly-free factorizable BPH algebra automatically satisfies
\[
 \pic{Hopf_double_modular_axiom_inverse}
\]

We denote by $\HAt$ an anomaly-free factorizable BPH algebra, and by $\Algt$ the braided monoidal category freely generated by $\HAt$. This category was first introduced in \cite{BP11} under the name $\bar{\bar{\calH}}^r$.

\subsection{Quotient and algebraic presentation functors}\label{S:Kirby_top_functors} We consider the \textit{quotient} functor 
\[
 \tQtnt : \Algf \to \tAlgt,
\]
which is the braided monoidal functor sending the generating object $\HAf$ of $\Algf$ to the generating object $\tHAt$ of $\tAlgt$, and structure morphisms to structure morphisms. In other words, the functor $\tQtnt$ essentially adds the factorizability relation to the set of axioms satisfied by the BPH algebra $\HAf$.

We also consider the \textit{quotient} functor 
\[
 \Qtnt : \tAlgt \to \Algt,
\]
which is the braided monoidal functor sending the generating object $\tHAt$ of $\tAlgt$ to the generating object $\HAt$ of $\Algt$, and structure morphisms to structure morphisms. Again, the functor $\Qtnt$ essentially adds the anomaly-free relation to the set of axioms satisfied by the factorizable BPH algebra $\tHAt$.

Next, we consider the \textit{Kirby tangle presentation} functor
\[
 K : \Algf \to \KTan,
\]
which is the braided monoidal functor sending the generating object $\HAf$ of $\Algf$ to the object $1 \in \KTan$, and structure morphisms to
\begin{align*}
 \mu &\mapsto \pic{generators_product_KTan} &
 \eta &\mapsto \pic{generators_unit_KTan} \\*[10pt]
 \Delta &\mapsto \pic{generators_coproduct_KTan} &
 \varepsilon &\mapsto \pic{generators_counit_KTan} \\*[10pt]
 S &\mapsto \pic{generators_antipode_KTan} &
 S^{-1} &\mapsto \pic{generators_antipode_inverse_KTan} \\[10pt]
 v_+ &\mapsto \pic{generators_ribbon_KTan} &
 v_- &\mapsto \pic{generators_ribbon_inverse_KTan} \\[10pt]
 \lambda &\mapsto \pic{generators_integral_KTan} &
 \Lambda &\mapsto \pic{generators_cointegral_KTan}
\end{align*}
This automatically implies
\begin{align*}
 w_+ &\mapsto \pic{generators_copairing_KTan} &
 w_- &\mapsto \pic{generators_copairing_negative_KTan}
\end{align*}
The functor $K$ is an equivalence, see \cite[Theorem~4.7.5]{BP11}.

Next, we consider the \textit{signed top tangle presentation} functor
\[
 T^\sigma : \tAlgt \to \ETTH,
\]
which is the braided monoidal functor sending the generating object $\tHAt$ of $\tAlgt$ to the object $1 \in \ETTH$, and structure morphisms to
\begin{align*}
 \mu &\mapsto \left( \pic{generators_product_TTan},0 \right) &
 \eta &\mapsto \left( \pic{generators_unit_TTan},0 \right) \\*[10pt]
 \Delta &\mapsto \left( \pic{generators_coproduct_TTan},0 \right) &
 \varepsilon &\mapsto \left( \pic{generators_counit_TTan},0 \right) \\*[10pt]
 S &\mapsto \left( \pic{generators_antipode_TTan},0 \right) &
 S^{-1} &\mapsto \left( \pic{generators_antipode_inverse_TTan},0 \right) \\[10pt]
 v_+ &\mapsto \left( \pic{generators_ribbon_TTan},0 \right) &
 v_- &\mapsto \left( \pic{generators_ribbon_inverse_TTan},0 \right) \\[10pt]
 \lambda &\mapsto \left( \pic{generators_integral_TTan},0 \right) &
 \Lambda &\mapsto \left( \pic{generators_cointegral_TTan},0 \right)
\end{align*}
This automatically implies
\begin{align*}
 w_+ &\mapsto \left( \pic{generators_copairing_TTan},0 \right) &
 w_- &\mapsto \left( \pic{generators_copairing_negative_TTan},0 \right)
\end{align*}
The functor $T^\sigma$ is an equivalence, see \cite[Theorem~5.5.4]{BP11}.

Similarly, we consider the \textit{top tangle presentation} functor
\[
 T : \Algt \to \TTH,
\]
which is the braided monoidal functor sending the generating object $\HAt$ of $\Algt$ to the object $1 \in \TTH$, and structure morphisms to those prescribed by $T^\sigma$, up to forgetting signature defects. The functor $T$ is an equivalence, see \cite[Theorem~5.5.4]{BP11}.

\section{Kerler--Lyubashenko functors}

In this section we prove our main results. We start by recalling ends and transmutations, before constructing the Kerler--Lyubashenko functor of Theorem~\ref{T:main_4-dim}.

\subsection{End}\label{S:end} Every finite rigid monoidal category $\calC$ admits an \textit{end}
\[
 \calE := \int_{X \in \calC} X \otimes X^* \in \calC,
\]
that is defined as the universal dinatural transformation with target
\begin{align*}
  (\_ \otimes \_^*) : \calC \times \calC^\op & \to \calC \\*
  (X,Y) & \mapsto X \otimes Y^*,
\end{align*}
see \cite[Sections~IX.4-IX.6]{M71} for the terminology. In particular, such an end is given by an object $\calE \in \calC$ together with a dinatural family of structure morphisms $j_X : \calE \to X \otimes X^*$ for every $X \in \calC$, meaning $(f \otimes \id_{X^*}) \circ j_X = (\id_Y \otimes f^*) \circ j_Y$ for every $f : X \to Y$. The end $\calE$ is unique up to isomorphism. Furthermore, if $\calC$ is a ribbon category,
the universal property of the end uniquely determines the following morphisms:
\begin{align*}
 \pic{end_product_1} &:= \pic{end_product_2} &
 \pic{end_unit_1} &:= \pic{end_unit_2} \\*[10pt]
 \pic{end_coproduct_1} &:= \pic{end_coproduct_2} &
 \pic{end_counit_1} &:= \pic{end_counit_2} \\*[10pt]
 \pic{end_antipode_1} &:= \pic{end_antipode_2} &
 \pic{end_antipode_inverse_1} &:= \pic{end_antipode_inverse_2} \\[10pt]
 \pic{end_ribbon_1} &:= \pic{end_ribbon_2} &
 \pic{end_ribbon_inverse_1} &:= \pic{end_ribbon_inverse_2} \\*[10pt]
 \pic{end_copairing_1} &:= \pic{end_copairing_2} &
 \pic{end_copairing_negative_1} &:= \pic{end_copairing_negative_2} 
\end{align*}
As we will show in Proposition~\ref{P:BP-ribbon}, these morphisms give $\calE$ the structure of a BP-ribbon Hopf algebra.

If $\calC$ is a unimodular ribbon category, then the end $\calE$ also admits an integral $\lambda : \calE \to \one$ and a cointegral $\Lambda : \one \to \calE$, which are unique up to scalar \cite[Propositions~4.2.4 \& 4.2.7]{KL01}. Furthermore, we can always choose a pair satisfying $\lambda \circ \Lambda = \id_{\one}$, whose existence is ensured by \cite[Proposition~4.2.5]{KL01}.

\begin{proposition}\label{P:modularity}
 A unimodular ribbon category $\calC$ is modular if and only if the copairing $w_+ : \one \to \calE \otimes \calE$ of the end $\calE$ is non-degenerate, in the sense that the morphism $D_{\calE^*,\calE} : \calE^* \to \calE$ defined by
 \[
  D_{\calE^*,\calE}:= \pic{Drinfeld_2}
 \]
 is invertible.
\end{proposition}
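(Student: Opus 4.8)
The plan is to identify the abstract non-degeneracy condition for the copairing $w_+$ of the end $\calE$ with the standard factorizability criterion for $\calC$, namely that the Müger center of $\calC$ is trivial, and then invoke the equivalence of the latter with modularity already established (e.g.\ via \cite[Theorem~1.1]{S16}). Concretely, one works out what the morphism $D_{\calE^*,\calE} : \calE^* \to \calE$ computes on the universal dinatural family $\{j_X\}$: unwinding the definitions of $w_+$ and of the dual pairing, $D_{\calE^*,\calE}$ is essentially the categorical incarnation of the Drinfeld map, built from the double braiding together with the universal coevaluation/evaluation morphisms of the end. The key calculation is to express, for each $X \in \calC$, the composite $j_X \circ D_{\calE^*,\calE} \circ (j_Y)^\ast$ (or the analogous universal expression) in terms of the partial trace of the double braiding $c_{Y,X}\circ c_{X,Y}$, so that invertibility of $D_{\calE^*,\calE}$ becomes a statement that this family of morphisms is non-degenerate uniformly in $X$.

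The main steps, in order, would be: (1) recall the explicit formula for $w_+ : \one \to \calE\otimes\calE$ from Section~\ref{S:end} and for the pairing $\calE\otimes\calE \to \one$ dual to $w_+$; (2) compute $D_{\calE^*,\calE}$ on the universal dinatural transformation, rewriting it via the dinaturality of $j$ and the hexagon axioms so that it matches the Lyubashenko coend pairing, i.e.\ the map whose adjoint sends a morphism $f : \one \to \int^{X} X^\ast \otimes X$ to the family of its "Hopf-link-colored" partial traces; (3) observe that $D_{\calE^*,\calE}$ is invertible if and only if the only transparent objects of $\calC$ are direct sums of $\one$ — this is the heart of the matter and is exactly the content of the known characterizations of factorizability for finite ribbon categories, so one can either reprove it directly from the universal property of $\calE$ or cite \cite[Theorem~1.1]{S16} after matching notation; (4) conclude by the definition of modularity (factorizability) given in Section~\ref{S:unimodular_factorizable_categories}, namely that $\calC$ is modular precisely when all transparent objects are trivial.

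The hard part will be step (3): showing that non-degeneracy of $w_+$ forces the Müger center to be trivial, and conversely. In one direction one uses that a transparent object $X$ produces, via $j_X$, an element in the kernel (or in a suitable annihilator) of the pairing induced by $w_+$, because transparency kills the double braiding and the partial-trace expression collapses; in the other direction one must argue that if $D_{\calE^*,\calE}$ fails to be invertible then — since $\calC$ is finite, hence $\calE$ is a well-behaved (coend) object and $D_{\calE^*,\calE}$ is a morphism between objects of equal "length" — its kernel or cokernel detects a nonzero transparent subobject, which is the more delicate implication and is where finiteness of $\calC$ is genuinely used. I would handle this by translating everything into the language of \cite{S16} or \cite{KL01} rather than re-deriving it: the cleanest route is to note that $w_+$ is, up to the identifications fixed in Section~\ref{S:end}, the canonical Hopf pairing on the coend considered by Lyubashenko, and its non-degeneracy is \emph{defined} to be modularity in that framework, so the proposition is really a dictionary statement once the structure morphisms on $\calE$ have been matched with their Hopf-algebraic counterparts.
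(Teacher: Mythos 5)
Your proposal is correct and takes essentially the same route as the paper: the actual proof identifies $\calE^*$ (with a suitable dinatural family) as a model for the coend $\calL = \int^{X} X^* \otimes X$, checks via the universal properties that $D_{\calE^*,\calE}$ corresponds under this identification to the Drinfeld map $D_{\calL,\calE}$ of \cite{FGR17}, and then cites \cite[Proposition~4.11]{FGR17} for the equivalence of invertibility of that map with modularity — precisely the ``dictionary'' strategy you settle on, with \cite{FGR17} playing the role you assign to \cite{S16}/\cite{KL01}.
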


A proof of Proposition~\ref{P:modularity} can be found in Appendix~\ref{A:modularity}.

\subsection{Transmutation} Let us fix a unimodular ribbon Hopf algebra $H$, and let $\mods{H}$ denote the unimodular ribbon category of finite-di\-men\-sion\-al left $H$-mod\-ules. We denote by $\ad \in \mods{H}$ the adjoint representation of $H$, which is given by the vector space $H$ equipped with the adjoint left $H$-action
\[
 h \triangleright x := h_{(1)}xS(h_{(2)})
\]
for all $h \in H$ and $x \in \ad$ (recall Sweedler's notation for the coproduct). We have
\[
 \ad = \int_{\mathrlap{V \in \mods{H}}}^{} V \otimes V^*
\]
with structure morphisms given by
\[
 \everymath{\displaystyle}
 \begin{array}{rcl}
  j_V : \ad & \to & V \otimes V^* \\
  x & \mapsto & \sum_{i \in I_V} (x \cdot v_i) \otimes f_i
 \end{array}
\]
for every object $V \in \mods{H}$, where $\{ v_i \in V \mid i \in I_V \}$ and $\{ f_i \in V^* \mid i \in I_V \}$ are dual bases, see \cite[Proposition~A.2]{BD20} for a proof. The fact that the adjoint representation $\ad$ can be given the structure of a braided Hopf algebra in $\mods{H}$ dates back to the work of Majid \cite{M91}, who called the result the \textit{transmutation} of $H$, denoted $\myuline{H}$. Along with the braiding 
\[
 c_{\myuline{H},\myuline{H}}(x \otimes y) = (R'' \triangleright y) \otimes (R' \triangleright x),
\]
structure morphisms of $\myuline{H}$ are given, for all $x,y \in \myuline{H}$, by
\begin{align*}
 \myuline{\mu}(x \otimes y) &= xy, &
 \myuline{\eta}(1) &= 1, \\*
 \myuline{\Delta}(x) &= 
 x_{(1)}S(R''_i) \otimes (R'_i \triangleright x_{(2)}), &
 \myuline{\varepsilon}(x) &= \varepsilon(x), \\*
 \myuline{S}(x) &= 
 R''_i S(R'_i \triangleright x), &
 \myuline{S^{-1}}(x) &= 
 S^{-1}(R'_i \triangleright x) R''_i, \\
 \myuline{v_+}(1) &= v_+, &
 \myuline{v_-}(1) &= v_-, \\*
 \myuline{w_+}(1) &= S(R''_j R'_i) \otimes R'_j R''_i, &
 \myuline{w_-}(1) &= R''_i S^2(R'_j) \otimes R''_j R'_i, \\
 \myuline{\lambda}(x) &= \lambda(x), &
 \myuline{\Lambda}(1) &= \Lambda.
\end{align*}

\subsection{Proofs of Theorems \ref{T:main_4-dim} and \ref{T:main_3-dim}}\label{S:proofs}

Let us restate Theorem \ref{T:main_4-dim}.

\begin{theorem}\label{T:KL_functor_4-dim}
 If $\calC$ is a unimodular ribbon category, then there exists a unique braided monoidal functor
 \[
  \KLF : \Algf \to \calC
 \]
 defined by the assignment $\KLF(\HAf) = \calE$ and sending structure morphisms to structure morphisms.
\end{theorem}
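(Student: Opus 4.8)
The plan is to build the functor $\KLF : \Algf \to \calC$ using the fact, established in the proceeding subsection, that $\calE = \int_{X \in \calC} X \otimes X^*$ carries all the structure morphisms of a pre-modular Hopf algebra in $\calC$. Since $\Algf$ is \emph{freely} generated, as a braided monoidal category, by a pre-modular Hopf algebra object $\HAf$, the universal property immediately gives at most one braided monoidal functor with $\KLF(\HAf) = \calE$ sending each structure morphism of $\HAf$ to the correspondingly-named structure morphism on $\calE$; existence then amounts to verifying that the morphisms on $\calE$ constructed in Section~\ref{S:end} actually satisfy the defining axioms of a pre-modular Hopf algebra (Definitions~\ref{D:braided_Hopf}, \ref{D:ribbon_braided_Hopf}, \ref{D:unimodular_braided_Hopf}, \ref{D:pre-modular}). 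So the proof reduces to a sequence of verifications, which I would organize as follows.

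First I would prove that the product, unit, coproduct, counit, antipode and inverse antipode defined via the universal property of the end make $\calE$ a braided Hopf algebra in $\calC$ (Definition~\ref{D:braided_Hopf}); this is the content I would isolate as Proposition~\ref{P:BP-ribbon} (or a companion to it), and it is essentially Lyubashenko's / Majid's result that $\calE$ is the ``transmutation'' living in $\calC$ — each axiom is checked by precomposing both sides with the universal dinatural family $j_X$ (or $j_X \otimes j_Y$, etc.) and using dinaturality together with the ribbon structure to reduce to an identity among string diagrams in $\calC$. Second, I would check the BP-ribbon axioms for $v_\pm, w_\pm$: centrality, invertibility, normalization and antipode-invariance of $v_+$; that $w_+$ is a Hopf copairing; and the compatibility axioms $(iii)$ of Definition~\ref{D:ribbon_braided_Hopf} relating $v_+$ to $\Delta$ and $w_+$ to the braiding. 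Here the computations of $\myuline{v_+}, \myuline{w_+}$ etc.\ in the transmutation $\myuline{H}$ serve as a reliable guide, and the equivalent formulations via the adjoint morphism $\alpha$ (from \cite[Proposition~4.4.5]{BP11}) can be invoked to shorten the argument. Third, I would verify BP-unimodularity (Definition~\ref{D:unimodular_braided_Hopf}): the integral $\lambda : \calE \to \one$ and cointegral $\Lambda : \one \to \calE$ exist and are unique up to scalar by \cite[Propositions~4.2.4 \& 4.2.7]{KL01}, can be normalized so that $\lambda \circ \Lambda = \id_{\one}$ by \cite[Proposition~4.2.5]{KL01}, and their left (co)invariance is standard; the nontrivial point is \emph{antipode-invariance} of $\lambda$ and $\Lambda$, which is precisely where unimodularity of $\calC$ enters — this should follow from the categorical analogue of the identity $\lambda(xy) = \lambda(yS^2(x))$ together with $P_{\one}^* \cong P_{\one}$, mirroring \cite[Theorem~10.5.4]{R12} and \cite[Lemma~2.5]{L97}.

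The main obstacle, I expect, is the antipode-invariance of the integral $\lambda$ in the categorical (non-semisimple) setting: unlike the other axioms, this one genuinely uses unimodularity of $\calC$ and does not follow from formal manipulation of the universal dinatural family alone. I would handle it by reducing to a statement about the coend/end pairing — concretely, showing that the two morphisms $\lambda$ and $\lambda \circ S_\calE$ both exhaust the (one-dimensional) space of integrals and agree on a suitable element, using that the canonical pairing on $\calE$ is compatible with $S$ up to $S^2$, and that $S^2$ acts trivially on $\calE$ because the pivotal structure has been strictified to the identity. Once all axioms are in place, the functor $\KLF : \Algf \to \calC$ is obtained directly from the universal property of $\Algf$, and its uniqueness is automatic. (The upgrade to $\RHB$ in Theorem~\ref{T:main_4-dim} then follows by precomposing with the equivalence $D \circ K : \Algf \xrightarrow{\cong} \KTan \xrightarrow{\cong} \RHB$, and Theorems~\ref{T:main_3-dim} are obtained by checking that under the stronger hypotheses $\KLF$ kills the modularity and anomaly-free relations, invoking Proposition~\ref{P:modularity} and the factorization through $\tQtnt$, $\Qtnt$.)
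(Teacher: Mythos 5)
Your proposal follows essentially the same route as the paper: uniqueness and existence are reduced, via the freeness of $\Algf$, to verifying that $\calE$ is a pre-modular Hopf algebra, with the Hopf algebra and copairing axioms attributed to Lyubashenko and Kerler, the compatibility axioms $(iii)$ of Definition~\ref{D:ribbon_braided_Hopf} singled out as the genuinely new verifications (this is exactly the content of Proposition~\ref{P:BP-ribbon}), and BP-unimodularity supplied by \cite[Propositions~4.2.4 \& 4.2.7]{KL01}. The only difference is one of emphasis: the paper outsources the antipode-invariance of $\lambda$ and $\Lambda$ entirely to the citation of \cite{KL01}, whereas you treat it as the main point requiring a separate argument — a reasonable precaution, but not a divergence in method.
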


The functor $\KLF : \Algf \to \calC$ is called the \textit{Kerler--Lyubashenko} functor associated with $\calC$. It should be noted that our abusive notation does not distinguish between the functor of Theorem~\ref{T:KL_functor_4-dim} and the one of Theorem~\ref{T:main_4-dim}, but the two are obviously related by the equivalence functor $D \circ K : \RHB \to \Algf$. In order to prove Theorem~\ref{T:KL_functor_4-dim}, we only need to check that the end of a unimodular ribbon category admits the structure of a BP-ribbon Hopf algebra, since we already know it is BP-unimodular.

\begin{proposition}\label{P:BP-ribbon}
 If $\calC$ is a unimodular ribbon category, then the end $\calE$ is a BP-rib\-bon Hopf algebra in $\calC$.
\end{proposition}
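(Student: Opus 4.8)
The plan is to verify directly that the morphisms defined via the universal property of the end in Section~\ref{S:end} satisfy the axioms of Definition~\ref{D:ribbon_braided_Hopf}. The starting point is that $\calE$ is already known to be a braided Hopf algebra (this is the classical Majid--Lyubashenko construction, and the algebra, coalgebra, and antipode axioms are verified by the familiar graphical calculus for ends), so what remains is precisely the BP-ribbon structure: the ribbon element $v_+$, its inverse $v_-$, and the copairing $w_+$. The key computational device throughout is the \emph{dinaturality} of the structure morphisms $j_X : \calE \to X \otimes X^*$, together with the \emph{universal property}: to prove two morphisms into $\calE$ (or into tensor powers $\calE^{\otimes n}$, using that $\calE^{\otimes n} = \int_{X_1,\dots,X_n} \bigotimes_i X_i \otimes X_i^*$) agree, it suffices to check that they agree after composing with each $j_{X_1} \otimes \cdots \otimes j_{X_n}$, and then the checks reduce to identities in the ribbon category $\calC$ involving only the braiding, twist, and duality morphisms.

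The steps I would carry out, in order, are as follows. First, I would record the explicit composites $j_X \circ v_+$, $j_X \circ v_-$, and $(j_X \otimes j_Y) \circ w_+$ coming from the definitions in Section~\ref{S:end} --- these express $v_\pm$ in terms of the twist $\vartheta_X$ (and its inverse) composed with coevaluations, and $w_+$ in terms of the double braiding $c_{Y,X} \circ c_{X,Y}$ composed with coevaluations. Second, I would check axiom $(i)$: centrality of $v_+$ follows from naturality of the twist and the dinaturality of $j$; invertibility is witnessed by $v_-$, using $\vartheta_X \vartheta_X^{-1} = \id_X$; normalization ($\varepsilon \circ v_+ = \id_{\one}$) is immediate since the twist on $\one$ is trivial; antipode-invariance $S \circ v_+ = v_+$ follows from $\vartheta_{X^*} = (\vartheta_X)^*$ together with the formula for $S$ on $\calE$. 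Third, I would verify axiom $(ii)$, that $w_+$ is a Hopf copairing, by the standard graphical manipulation: the compatibility of $w_+$ with $\mu$ and $\Delta$ on each side reduces, after applying the appropriate $j_X$'s, to hexagon-type identities for the braiding. Fourth, axiom $(iii)$: the compatibility of $v_+$ with the coproduct is the statement that $\vartheta_{X \otimes Y} = (\vartheta_X \otimes \vartheta_Y) \circ c_{Y,X} \circ c_{X,Y}$, which is exactly the defining ribbon/balancing axiom in $\calC$, transported through the end; the compatibility of $w_+$ with the braiding is again a naturality-of-braiding computation. Throughout, I can freely use the alternative formulations via the adjoint morphism $\alpha$ from \cite[Proposition~4.4.5]{BP11} and the reflected forms cited after Definition~\ref{D:ribbon_braided_Hopf} if they shorten a particular check.

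The main obstacle I anticipate is bookkeeping rather than conceptual difficulty: the axioms are stated graphically for a general braided Hopf algebra, and one must translate each into a statement about the $j_X$-composites and then recognize the resulting tangle identity as a consequence of the ribbon axioms of $\calC$. The subtlest point is likely to be the interaction between the \emph{pivotal} structure (which we have normalized to be strict, $\varphi_X = \id_X$) and the identification $\calE^* \cong \int_X X^{**} \otimes X^* $ needed when dualizing; one must be careful that the chosen evaluation/coevaluation families $\lev, \lcoev, \rev, \rcoev$ are used consistently, and that the formula for $S$ (which involves a dual together with a braiding) is compatible with the copairing formula. A secondary subtlety is that $w_-$ is \emph{not} defined as an independent datum but is forced by the construction; one should check that the $w_-$ arising from the end-formula agrees with the $w_-$ induced by $v_-$ as in Definition~\ref{D:ribbon_braided_Hopf}, which, by the remarks following that definition (citing \cite[Lemmas~3 \& 4]{K01}), will follow once the other axioms are in place. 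Modulo these care points, every step is a routine --- if somewhat lengthy --- diagrammatic verification, and no new ideas beyond the universal property of $\calE$ and the axioms of a ribbon category should be required.
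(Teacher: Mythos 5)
Your proposal is correct and follows essentially the same route as the paper: reduce each BP-ribbon axiom, via the universal property and dinaturality of the structure morphisms $j_X$, to an isotopy/ribbon-category identity in $\calC$. The only difference is presentational --- the paper outsources the Hopf copairing axiom to Lyubashenko and the ribbon-element axioms $(i)$ to Kerler, spelling out only the new compatibility axioms $(iii)$, whereas you propose to verify all of them directly; your flagged care points (strict pivotality, and the agreement of the end-copairing with the copairing induced by $v_\pm$) are exactly the right ones to watch.
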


\begin{proof}
 The claim essentially follows from \cite[Theorem~5.5.4]{BP11}. Indeed, the structure morphisms for $\calE$ are uniquely determined by the same diagrams which yield a ribbon Hopf algebra structure for the generating object of $\ETTH$. Remark that the proof that this defines a Hopf algebra structure with a Hopf copairing dates back to \cite[Sections~2-3]{L95}. We point out that Lyubashenko actually proves the result for the coend 
 \[
  \calL = \int^{X \in \calC} X \otimes X^* \in \calC,
 \]
 but since he reads morphisms from top to bottom, the same diagrams and graphical proofs also establish our version of the claim. For instance, the fact that the copairing $w_+ : \one \to \calE \otimes \calE$ is a Hopf copairing is witnessed by the equality
 \[
  \pic{isotopy_Hopf_copairing_1} = \pic{isotopy_Hopf_copairing_2}
 \]
 Compare this with \cite[Figure~27]{L95}. Almost all of the remaining relations have been checked in \cite[Section~5.2]{K01} using a graphical calculus that essentially corresponds to the one we considered for $\KTan$. With respect to the ribbon Hopf algebra structure morphisms we introduced earlier, this amounts to checking that
 \begin{align*}
  \pic{isotopy_ribbon_central_1} &= \pic{isotopy_ribbon_central_2} &
  \pic{isotopy_ribbon_invertible_1} &= \pic{isotopy_ribbon_invertible_2} \\*[10pt]
  \pic{isotopy_ribbon_normalized_1} &= \pic{isotopy_ribbon_normalized_2} &
  \pic{isotopy_ribbon_antipode_1} &= \pic{isotopy_ribbon_antipode_2}
 \end{align*}
 Remark that the first equality uses the dinaturality of the family of structure morphisms 
 \[
  \{ j_X : \calE \to X \otimes X^* \mid X \in \calC \}.
 \]
 What neither Lyubashenko nor Kerler proved, of course, is that the axioms $(iii)$ of Definition~\ref{D:pre-modular} hold, since they appeared for the first time in the complete algebraic presentation of $\RHB$ found by Bobtcheva and Piergallini in \cite{BP11}. An explicit proof of these identities in our setting amounts to the following: first, we need to check that an isotopy gives
 \[
  \pic{isotopy_BP_1} = \pic{isotopy_BP_2}
 \]
 This requires using again the dinaturality of the family of structure morphisms of $\calE$. Next, we need to show that another isotopy gives 
 \[
  \pic{isotopy_BP_3} = \pic{isotopy_BP_4}
 \]
 Both claims are easy to verify, and left to the reader.
\end{proof}

\begin{proof}[Proof of Theorem~\ref{T:KL_functor_4-dim}]
 Thanks to Proposition~\ref{P:BP-ribbon}, the end $\calE$ is a BP-rib\-bon Hopf algebra in $\calC$. Furthermore, $\calE$ is a BP-unimodular Hopf algebra in $\calC$ thanks to  \cite[Propositions~4.2.4 \& 4.2.7]{KL01}. This means that the functor
 \[
  \KLF : \Algf \to \calC
 \]
 is well-defined.
\end{proof}

Under stricter assumptions on the unimodular ribbon category $\calC$, the Kerler--Lyubashenko functor $\KLF$ factors through $\tAlgt$, and possibly even $\Algt$.

\begin{theorem}\label{T:KL_functor_3-dim}
 If $\calC$ is a factorizable ribbon category, then there exists a unique braided monoidal functor
 \[
  \KLFts : \tAlgt \to \calC
 \]
 making the following into a commutative diagram:
 \begin{center}
  \begin{tikzpicture}[descr/.style={fill=white}]
   \node (P0) at (180:2.5) {$\Algf$};
   \node (P1) at (0,0) {$\tAlgt$};
   \node (P2) at (270:2.5) {$\calC$};
   \draw
   (P0) edge[->] node[left,xshift=-5pt] {$\KLF$} (P2)
   (P0) edge[->>] node[above] {$\tQtnt$} (P1)
   (P1) edge[->] node[right] {$\KLFts$} (P2);
  \end{tikzpicture}
 \end{center}
 Furthermore, if the specified integral $\lambda : \calE \to \one$ satisfies $\lambda \circ v_+ = \id_{\one}$, then there exists a unique braided monoidal functor
 \[
  \KLFt : \Algt \to \calC
 \]
  making the following into a commutative diagram:
 \begin{center}
  \begin{tikzpicture}[descr/.style={fill=white}]
   \node (P0) at (180:2.5) {$\Algf$};
   \node (P1) at (0,0) {$\tAlgt$};
   \node (P2) at (270:2.5) {$\calC$};
   \node (P3) at (0:2.5) {$\Algt$};
   \draw
   (P0) edge[->] node[left,xshift=-5pt] {$\KLF$} (P2)
   (P0) edge[->>] node[above] {$\tQtnt$} (P1)
   (P1) edge[->] node[descr] {$\KLFts$} (P2)
   (P1) edge[->>] node[above] {$\Qtnt$} (P3)
   (P3) edge[->] node[right,xshift=5pt] {$\KLFt$} (P2);
  \end{tikzpicture}
 \end{center}
\end{theorem}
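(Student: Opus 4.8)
The plan is to obtain both factorizations formally from the universal properties of the free braided monoidal categories $\Algf$, $\tAlgt$, $\Algt$, once one identifies the additional relations that the end $\calE$ satisfies inside a factorizable $\calC$ (note that a factorizable ribbon category is in particular unimodular, so $\KLF$ is available by Theorem~\ref{T:KL_functor_4-dim}). Recall that $\calE$ is a pre-modular Hopf algebra in $\calC$: it is BP-ribbon by Proposition~\ref{P:BP-ribbon}, and BP-unimodular since it carries a compatible integral and cointegral. Since $\tAlgt$ is freely generated by a \emph{modular} Hopf algebra — a pre-modular one whose copairing $w_+$ is non-degenerate in the sense of Definition~\ref{D:modular} — a braided monoidal functor $\tAlgt \to \calC$ is the same thing as a modular Hopf algebra in $\calC$, and $\tQtnt$ is the universal functor out of $\Algf$ imposing this extra relation on the generating object. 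So the first step is to invoke Proposition~\ref{P:modularity}: because $\calC$ is factorizable, $w_+ : \one \to \calE \otimes \calE$ is non-degenerate, i.e.\ $\calE$ is a modular Hopf algebra in $\calC$. This immediately produces a braided monoidal functor $\KLFts : \tAlgt \to \calC$ with $\KLFts(\tHAt) = \calE$, sending structure morphisms to structure morphisms.

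The commutativity $\KLFts \circ \tQtnt = \KLF$ and the uniqueness of $\KLFts$ are then purely formal. Both $\KLFts \circ \tQtnt$ and $\KLF$ are braided monoidal functors $\Algf \to \calC$ that send $\HAf$ to $\calE$ and each structure morphism to the corresponding structure morphism of $\calE$; since $\Algf$ is freely generated by $\HAf$, they agree. For uniqueness, observe that $\tQtnt$ is the identity on objects and sends the generating object and its structure morphisms to those of $\tHAt$, so any braided monoidal $F : \tAlgt \to \calC$ with $F \circ \tQtnt = \KLF$ is forced to agree with $\KLFts$ on all generators of $\tAlgt$; freeness of $\tAlgt$ then gives $F = \KLFts$.

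The second factorization runs identically with $\Qtnt : \tAlgt \to \Algt$ replacing $\tQtnt$. Now $\Algt$ is freely generated by an \emph{anomaly-free} modular Hopf algebra, i.e.\ a modular one whose ribbon element $v_+$ satisfies the normalization of Definition~\ref{D:double_modular}; for $\calE$ this normalization is exactly the hypothesis $\lambda \circ v_+ = \id_{\one}$ on the specified integral. Granting it, $\calE$ is an anomaly-free modular Hopf algebra in $\calC$ — the companion identity $\lambda \circ v_- = \id_{\one}$, if the axiom is phrased with both, being then automatic exactly as recorded after Definition~\ref{D:double_modular} — so the universal property of $\Algt$ yields a unique braided monoidal $\KLFt : \Algt \to \calC$ with $\KLFt(\HAt) = \calE$ sending structure morphisms to structure morphisms, and the freeness argument above gives $\KLFt \circ \Qtnt = \KLFts$ together with uniqueness.

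The one genuinely mathematical ingredient is Proposition~\ref{P:modularity}, which converts categorical factorizability of $\calC$ into algebraic non-degeneracy of the copairing $w_+$ on $\calE$; its proof is carried out separately in the appendix, so within the present argument nothing deep remains. The only point that needs a short explicit check is that the diagram defining the anomaly-free axiom in Definition~\ref{D:double_modular} really does read $\lambda \circ v_+ = \id_{\one}$ when evaluated on $\calE$ — a direct comparison of pictures — and, if required, that this forces $\lambda \circ v_- = \id_{\one}$ as well. I do not expect an obstacle here: all the relations on $\calE$ needed for this are already established, and any remaining difficulty is merely one of bookkeeping with normalizations.
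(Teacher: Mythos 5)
Your overall strategy coincides with the paper's: reduce everything to showing that the end $\calE$ carries the structure of a modular (resp.\ anomaly-free modular) Hopf algebra in $\calC$, and then let the universal properties of the free categories $\tAlgt$ and $\Algt$, together with the surjectivity of the quotient functors, do the rest. The formal part of your argument (existence, commutativity, uniqueness from freeness) is fine, and so is your treatment of the anomaly-free step, where the axiom of Definition~\ref{D:double_modular} really is the hypothesis $\lambda \circ v_+ = \id_{\one}$ and the $v_-$ version is automatic.

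The gap is in the sentence ``because $\calC$ is factorizable, $w_+$ is non-degenerate, i.e.\ $\calE$ is a modular Hopf algebra in $\calC$.'' Proposition~\ref{P:modularity} gives non-degeneracy in the sense that the Drinfeld-type morphism $D_{\calE^*,\calE}$ built from $w_+$ is invertible; the modularity axiom of Definition~\ref{D:modular} is a different, diagrammatic statement which involves the integral $\lambda$ and the cointegral $\Lambda$, and these are only determined up to scalar by the unimodular structure. Passing from the first to the second is exactly the content of Proposition~\ref{P:modular_Hopf_algebra} in the paper: from the invertibility of $D_{\calE^*,\calE}$ one deduces the cancellation property $(\id_{\calE}\otimes f)\circ w_+=(\id_{\calE}\otimes g)\circ w_+\Rightarrow f=g$, then invokes Kerler's Lemma~6 to conclude that $(\id_{\calE}\otimes\lambda)\circ w_+$ is a cointegral, and finally uses the uniqueness of integrals and cointegrals up to scalar to fix the normalization $(\lambda\otimes\lambda)\circ w_+=\id_{\one}$ and $\lambda\circ\Lambda=\id_{\one}$, under which the axiom of Definition~\ref{D:modular} holds on the nose. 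Without this step your argument only yields the modularity relation up to an undetermined scalar, which is not enough for $\KLF$ to descend to $\tAlgt$. Apart from this missing link, your proposal is correct and follows the same route as the paper.
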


This is a restatement of Theorem \ref{T:main_3-dim} in Section~\ref{S:main_results}, and we are again adopting an abusive notation. The proof of Theorem~\ref{T:KL_functor_3-dim} follows directly from the following statement, which is essentially a dual version of \cite[Theorem~5]{K96}.

\begin{proposition}\label{P:modular_Hopf_algebra}
 If $\calC$ is a factorizable ribbon category, then there exist an integral $\lambda : \calE \to \one$ and a cointegral $\Lambda : \one \to \calE$ making the end $\calE$ into a factorizable BPH algebra in $\calC$.
\end{proposition}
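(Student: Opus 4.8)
The plan is to verify the two modularity axioms of Definition~\ref{D:modular} and Definition~\ref{D:double_modular} for the end $\calE$ under the factorizability hypothesis, using the already-established BP-ribbon BP-unimodular structure from Proposition~\ref{P:BP-ribbon} and \cite[Propositions~4.2.4 \& 4.2.7]{KL01}. The key observation is that modularity of $\calE$ as a pre-modular Hopf algebra is precisely the condition that the copairing $w_+ : \one \to \calE \otimes \calE$ be non-degenerate, and this has already been translated into a concrete categorical statement: by Proposition~\ref{P:modularity}, a unimodular ribbon category $\calC$ is modular (in the sense of Section~\ref{S:unimodular_factorizable_categories}, i.e.\ factorizable) if and only if the morphism $D_{\calE^*,\calE} : \calE^* \to \calE$ built from $w_+$ is invertible. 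Since we are assuming $\calC$ factorizable, Proposition~\ref{P:modularity} immediately gives that $w_+$ is non-degenerate in exactly the sense required by Definition~\ref{D:modular}. So the first and main structural step is essentially a citation of Proposition~\ref{P:modularity}, read in the reverse direction.

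What remains is to check that the non-degeneracy condition as phrased in Definition~\ref{D:modular} — which asserts the existence of a particular two-sided inverse witnessed by a specific string diagram — agrees with the invertibility of $D_{\calE^*,\calE}$. Here I would unwind the graphical definition of $w_+$ for the end (the morphism $\pic{end_copairing_1} := \pic{end_copairing_2}$ from Section~\ref{S:end}) and observe that the map appearing in Definition~\ref{D:modular} is, up to the canonical duality isomorphisms $\calE^* \cong \calE$ afforded by BP-unimodularity (the integral and cointegral $\lambda, \Lambda$ pairing $\calE$ with itself), the same as $D_{\calE^*,\calE}$. Concretely, one composes $D_{\calE^*,\calE}$ with the duality coming from $\lambda$ and $\Lambda$ and checks the resulting endomorphism of $\calE$ is the one named in Definition~\ref{D:modular}; invertibility of $D_{\calE^*,\calE}$ then transfers. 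This is the kind of diagram-chase that Kerler and Lyubashenko already perform in the cited references, so I would present it as a short verification and refer to \cite[Section~5.2]{K01} and \cite[Theorem~5]{K96} for the parallel computation, noting as the excerpt does that our conventions are the top-to-bottom mirror of Lyubashenko's.

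For the second half of the proposition — producing normalizations of $\lambda$ and $\Lambda$ realizing the modular structure — the point is that $\lambda$ and $\Lambda$ for the end are unique up to scalar by \cite[Propositions~4.2.4 \& 4.2.7]{KL01}, and we have the freedom to rescale so that $\lambda \circ \Lambda = \id_{\one}$; the BP-unimodularity axioms (i)–(iii) of Definition~\ref{D:unimodular_braided_Hopf}, including antipode-invariance of both $\lambda$ and $\Lambda$, hold for $\calE$ because $\calC$ is unimodular, which is already incorporated into the proof of Theorem~\ref{T:KL_functor_4-dim}. Thus the only genuinely new content is the non-degeneracy of $w_+$, which factorizability supplies via Proposition~\ref{P:modularity}. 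The main obstacle, then, is not a hard argument but rather the bookkeeping of matching the ad hoc non-degeneracy morphism written in Definition~\ref{D:modular} with the Drinfeld-type morphism $D_{\calE^*,\calE}$ of Proposition~\ref{P:modularity}: one must be careful that the two string diagrams, which differ by insertions of the pivotal structure, the antipode, and the $\lambda$/$\Lambda$ duality, really do encode inverse-equivalent conditions. I would isolate that comparison as a lemma-like paragraph, do the diagram manipulation there, and leave the routine checks to the reader as the excerpt's style suggests.
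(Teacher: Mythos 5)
Your opening move --- citing Proposition~\ref{P:modularity} to conclude from factorizability that $D_{\calE^*,\calE}$ is invertible --- is exactly the paper's first step. The gap is in how you get from there to the axiom of Definition~\ref{D:modular}. That axiom is an \emph{identity}: it asserts that a specific morphism built out of $w_+$ and the integral $\lambda$ inverts the copairing. Abstract invertibility of the map $\calE^* \to \calE$ induced by $w_+$ does not tell you that its inverse is realized by that particular morphism, and ``invertibility transfers'' can at best show that the morphism appearing in Definition~\ref{D:modular} is invertible, not that the required equality holds. Worse, the ``canonical duality $\calE^* \cong \calE$ afforded by $\lambda$ and $\Lambda$'' that you propose to conjugate by is not available a priori: $\lambda$ and $\Lambda$ are morphisms to and from $\one$, not a pairing on $\calE$, and the statement that the self-duality they induce (via the Frobenius-type pairing $\lambda \circ \mu$) is inverse to $w_+$ is essentially the modularity axiom itself --- so the identification you defer to a ``lemma-like paragraph'' is circular as sketched.

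What actually closes the gap in the paper is a different chain: invertibility of $D_{\calE^*,\calE}$ yields the cancellation property
\[
 (\id_{\calE} \otimes f) \circ w_+ = (\id_{\calE} \otimes g) \circ w_+ \ \Rightarrow\ f = g,
\]
from which \cite[Lemma~6]{K01} shows that $(\id_{\calE} \otimes \lambda) \circ w_+$ is a \emph{cointegral} of $\calE$; uniqueness of cointegrals up to scalar \cite[Proposition~4.2.4]{KL01} then identifies it with a multiple of $\Lambda$, and the scalar is eliminated by the normalization $(\lambda \otimes \lambda) \circ w_+ = \id_{\one}$ together with $\lambda \circ \Lambda = \id_{\one}$. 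Note in particular that the normalization you invoke ($\lambda \circ \Lambda = \id_{\one}$ alone) does not tie $\lambda$ to $w_+$ at all, so even granting your identification the modularity identity would only hold up to an undetermined nonzero scalar. You need the uniqueness-of-(co)integrals input and the $w_+$-dependent normalization of $\lambda$; these are the genuinely missing ingredients.
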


\begin{proof}
 Thanks to Proposition~\ref{P:modularity}, the copairing $w_+ : \one \to \calE \otimes \calE$ of the end $\calE$ is non-degenerate. Notice that this implies, in particular, that $w_+ : \one \to \calE \otimes \calE$ satisfies \cite[Equation~(24)]{K01}, meaning that
 \begin{align*}
  (\id_{\calE} \otimes f) \circ w_+ &= (\id_{\calE} \otimes g) \circ w_+ & 
  &\Rightarrow & 
  f &= g
 \end{align*}
 for all endomorphisms $f,g : \calE \to \calE$. Then \cite[Lemma~6]{K01} implies that the morphism $(\id_{\calE} \otimes \lambda) \circ w_+$ is a cointegral of $\calE$. But thanks to \cite[Proposition~4.2.4]{KL01}, integrals and cointegrals of $\calE$ are unique up to scalar. Then the claim follows by choosing the normalization
 \begin{align*}
  (\lambda \otimes \lambda) \circ w_+ &= \id_{\one}, &
  \lambda \circ \Lambda &= \id_{\one}.
 \end{align*}
\end{proof}

\section{Computational algorithm for \texorpdfstring{$\mods{H}$}{H-mod}}\label{S:algorithm}

In this section, we present an algorithm for the computation of the functor\footnote{We stress once more that our abusive notation, which overlaps with the one introduced in Theorems~\ref{T:main_4-dim}, \ref{T:main_3-dim}, \ref{T:KL_functor_4-dim}, and \ref{T:KL_functor_3-dim}, is justified by the existence of the equivalence functors recalled in Sections~\ref{S:handle_surgery_functors} and \ref{S:Kirby_top_functors}.} $\KLF : \KTan \to \mods{H}$ associated with a unimodular ribbon Hopf algebra $H$, and of the functor $\KLFts : \ETTH \to \mods{H}$ associated with a factorizable ribbon Hopf algebra $H$. The algorithm is based on the Hennings--Kauffman--Radford one, which has been presented in several slightly different flavors throughout the literature: see \cite[Sections~3 \& 5]{H96} for the original construction, \cite[Section~2]{KR94} for the first redefinition that avoids the use of orientations, \cite[Section~3.3]{K96} for a reformulation based on unoriented top tangles, \cite[Section~4.8]{V03} for a generalized version involving normalizable Kirby elements, \cite[Sections~7.3 \& 12]{H05} for a translation in terms of oriented bottom tangles, and \cite[Section~6.1]{BM02} for the first 4-di\-men\-sion\-al analogue. We will follow more closely the unoriented algorithms, which are based on \textit{singular diagrams} of framed tangles. These are obtained from regular diagrams by discarding framings, and forgetting the difference between overcrossings and undercrossings. On the set of singular diagrams, we consider the equivalence relation generated by all singular versions of the usual local moves corresponding to ambient isotopies of framed tangles, except for the first Reidemeister move. In particular, two equivalent singular diagrams represent homotopic tangles, but not all homotopies are allowed.

\subsection{4-Dimensional case}\label{S:4-dim_case}  Let us start by considering a unimodular ribbon Hopf algebra $H$, and let us compute the intertwiner $\KLF(T) : \ad^{\otimes m} \to \ad^{\otimes m'}$ associated with a Kirby tangle $T \in \KTan(m,m')$. We consider a vector
\[
 x_{1} \otimes \ldots \otimes x_m \in \ad^{\otimes m}
\]
and a regular diagram of $T$. First of all, we insert a bead labeled by $x_i$ on the $i$th incoming strand, just above the $(2i-1)$th incoming boundary point, for every integer $1 \leqs i \leqs m$, as shown:
\[
 \pic{bead_presentation_KTan_0}
\]
Next, we pass to the singular version of $T$, while also inserting beads labeled by components of the R-matrix as shown:
\begin{align*}
 \pic{bead_presentation_KTan_1} &\mapsto \pic{bead_presentation_KTan_2} &
 \pic{bead_presentation_KTan_3} &\mapsto \pic{bead_presentation_KTan_4}
\end{align*}
Then, we remove dotted components, while also inserting beads labeled by the cointegral as shown:
\begin{align*}
 \pic{bead_presentation_KTan_5} &\mapsto \pic{bead_presentation_KTan_6}
\end{align*}
When $k = 0$, removing a dotted component costs a multiplicative factor of $\varepsilon(\Lambda)$ in front of $T$. Next, we need to collect all beads sitting on the same strand in one place, which has to be next to the leftmost endpoint, for components which are not closed. As we slide beads past maxima, minima, and crossings, we change their labels according to the rule
\begin{align*}
 \pic{bead_presentation_KTan_7} &= \pic{bead_presentation_KTan_8} &
 \pic{bead_presentation_KTan_9} &= \pic{bead_presentation_KTan_10} \\*[10pt]
 \pic{bead_presentation_KTan_11} &= \pic{bead_presentation_KTan_12} &
 \pic{bead_presentation_KTan_13} &= \pic{bead_presentation_KTan_14}
\end{align*}
Next, we pass from our singular diagram to an equivalent one whose singular crossings all belong to singular versions of twist morphisms, and we replace them with beads labeled by pivotal elements according to the rule
\begin{align*}
 \pic{bead_presentation_KTan_15} &= \pic{bead_presentation_KTan_16} &
 \pic{bead_presentation_KTan_17} &= \pic{bead_presentation_KTan_18}
\end{align*}
This is indeed possible, because we started from a Kirby tangle. Finally, we collect all remaining beads, changing their labels along the way as before, and we multiply everything together according to the rule
\begin{align*}
 \pic{bead_presentation_KTan_19} &= \pic{bead_presentation_KTan_20}
\end{align*}
In the end, we are left with a planar tangle $B(T)$ carrying at most a single bead on each of its components.

\begin{lemma}\label{L:4-dim_algorithm}
 The intertwiner $\KLF(T) : \ad^{\otimes m} \to \ad^{\otimes m'}$ satisfies
 \[
  \KLF(T)(x_1 \otimes \ldots \otimes x_m) = 
  \left( \prod_{i=1}^m \lambda(y_i x_i) \right) 
  \left( \prod_{j=1}^k \lambda(z_j) \right)
  x'_1 \otimes \ldots \otimes x'_{m'}
 \]
 for every $x_{1} \otimes \ldots \otimes x_m \in \ad^{\otimes m}$, where
 \begin{equation*}
  B(T) = \pic{bead_presentation_KTan_21}
 \end{equation*}
\end{lemma}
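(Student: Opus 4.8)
The plan is to unwind the definition of $\KLF : \KTan \to \mods{H}$ through the equivalence $K : \Algf \to \KTan$ of \cite[Theorem~4.7.5]{BP11}. By the abusive identification explained above, the functor $\KLF : \KTan \to \mods{H}$ of this section is $\KLF \circ K^{-1}$ for the functor $\KLF : \Algf \to \mods{H}$ of Theorem~\ref{T:KL_functor_4-dim}, which sends $\HAf$ to $\ad = \myuline{H}$ and structure morphisms to the transmutation morphisms $\myuline{\mu},\myuline{\eta},\myuline{\Delta},\myuline{\varepsilon},\myuline{S},\myuline{S^{-1}},\myuline{v_+},\myuline{v_-},\myuline{w_+},\myuline{w_-},\myuline{\lambda},\myuline{\Lambda}$. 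So to evaluate $\KLF$ on a Kirby tangle $T$ one decomposes $T$ into the generating morphisms of $\KTan$ (the $K$-images of the above, together with the braiding), replaces each by the matching transmutation morphism on $\ad$, and reads off the image of $x_1 \otimes \ldots \otimes x_m$ fed into the bottom. The lemma asserts precisely that the bead algorithm carries out this substitution and evaluation strand by strand, so the proof amounts to matching each step of the algorithm with the corresponding piece of the substituted diagram.

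I would first fix the dictionary on a single $\ad$-colored strand, which transports an element of $H$. The explicit transmutation formulas give: $\myuline{\mu}$ multiplies beads, $\myuline{\eta}$ creates a bead $1$, $\myuline{\varepsilon}$ and $\myuline{\lambda}$ apply $\varepsilon$ and $\lambda$, $\myuline{\Lambda}$ creates a bead $\Lambda$, while $\myuline{\Delta}$, $\myuline{S}$, $\myuline{S^{-1}}$ and the braiding $c_{\ad,\ad}$ of $\mods{H}$ (the flip composed with the action of the $R$-matrix) all introduce components of $R$ acting adjointly. The left and right evaluation and coevaluation morphisms of $\mods{H}$ introduce the pivotal element $g$ at each critical point, i.e.\ conjugate the bead by $g^{\pm 1}$, equivalently apply $S^{\pm 2}$; this is the standard description of the ribbon structure of $\mods{H}$ together with the realization of $\ad$ as the end with universal dinatural family $\{ j_V \}$.

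Then I would check the algorithm step by step against this dictionary. Inserting the beads $x_i$ on the incoming strands records the input; passing to the singular diagram with $R$-beads at crossings is the Kauffman--Radford device --- once $R$-beads are inserted, $\KLF$-evaluation no longer distinguishes over- from under-crossings --- and it simultaneously realizes the braidings $c_{\ad,\ad}$ together with all the $R$-matrices entering $\myuline{\Delta}$ and $\myuline{S}$ through the crossings in $K(\Delta)$ and $K(S)$; removing a dotted unknot while inserting a $\Lambda$-bead reflects that $K(\Lambda)$ is a dotted unknot clasped by an undotted strand, whose $\KLF$-image threads $\myuline{\Lambda} = \Lambda$ into the strand via $\myuline{\Delta}$, the degenerate case of an unclasped dotted component collapsing by $\myuline{\varepsilon}$ into the scalar $\varepsilon(\Lambda)$; the bead-sliding rules across maxima, minima and crossings change labels by $S^{\pm 1}$, $S^{\pm 2}$ exactly as forced by the (co)evaluations and the anti-(co)multiplicativity of $\myuline{S}$; replacing a twist crossing by a $g^{\pm 1}$-bead is the $\KLF$-image of $v_\pm$ together with $g = uv_-$; and collecting and multiplying beads is iterated $\myuline{\mu}$. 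Finally, closing each component by $\myuline{\lambda}$ applied after the $\myuline{\Delta}$'s have split off the copies traversing it produces the scalars: on the $i$th bottom component the accumulated bead $y_i$ against the input $x_i$ under $\lambda$, i.e.\ $\lambda(y_i x_i)$; on each of the $k$ closed components $\lambda(z_j)$; while on the $m'$ top components the residual beads $x'_\ell$ survive. Assembling these identifications along a decomposition of $T$ yields the claimed formula.

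The main obstacle is the two global reductions: that $\KLF$-evaluation with $R$-beads inserted is invariant under the singular versions of all ambient-isotopy moves other than the first Reidemeister move, and that $1$-handle removal is correctly accounted for by $\Lambda$-beads (including the $\varepsilon(\Lambda)$ factor and the interaction with closed components). Both rest only on facts already recalled --- the hexagon identities for $c_{\ad,\ad}$, the ribbon axioms for $v_\pm$, unibalancedness $\lambda(x_{(1)}) x_{(2)} = \lambda(x) g^2$, and the identity $\lambda(xy) = \lambda(yS^2(x))$ valid for unimodular $H$ --- but the bookkeeping that makes them fit together is exactly the classical Hennings--Kauffman--Radford well-definedness argument, here adapted to the dotted ($1$-handle) setting and organized along a decomposition into the generators of $\KTan$. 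The auxiliary choices in the algorithm (where beads are collected, which equivalent singular diagram is used) leave the output unchanged by naturality of $\KLF$ and the same Hopf-algebraic identities, and this, as in the classical case, can be left to the reader.
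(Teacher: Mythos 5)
Your proposal is correct and follows essentially the same route as the paper: establish that the bead algorithm is compositional and well defined on Kirby tangles (the paper does this by checking composition and the identity morphism, the latter via Radford's cointegral identity, which is your ``$1$-handle removal by $\Lambda$-beads'' obstacle), and then reduce to verifying agreement with the transmutation structure morphisms $\myuline{\mu},\myuline{\Delta},\myuline{S},\ldots$ on the generators of $\KTan$, with the remaining bookkeeping left to the reader exactly as in the paper.
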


\begin{proof}
 If
 \[
  \KLF(T)(x_1 \otimes \ldots \otimes x_m) = 
  \left( \prod_{i=1}^m \lambda(y_i x_i) \right) 
  \left( \prod_{j=1}^k \lambda(z_j) \right)
  x'_1 \otimes \ldots \otimes x'_{m'}
 \]
 and
 \[
  \KLF(T')(x'_1 \otimes \ldots \otimes x'_{m'}) = 
  \left( \prod_{i=1}^{m'} \lambda(y'_i x'_i) \right) 
  \left( \prod_{j=1}^{k'} \lambda(z'_j) \right)
  x''_1 \otimes \ldots \otimes x''_{m''},
 \]
 then clearly
 \[
  \KLF(T' \circ T)(x_1 \otimes \ldots \otimes x_m) = \KLF(T')(\KLF(T)(x_1 \otimes \ldots \otimes x_m)).
 \]
 Furthermore, we have
 \begin{align*}
  &\pic{algorithm_KTan_1}
 \end{align*}
 Then, \cite[Theorem~10.2.2.(c)]{R12} implies
 \[
  \lambda(S^{-1}(\Lambda_{(1)})x_i) \Lambda_{(2)} = \lambda(x_i S(\Lambda_{(1)})) \Lambda_{(2)} = x_i.
 \]
 Therefore
 \begin{align*}
  \KLF(\id_m)(x_1 \otimes \ldots \otimes x_m) 
  &= x_1 \otimes \ldots \otimes x_m,
 \end{align*}
 This means that the algorithm indeed defines a functor with source $\KTan$. Then, it is sufficient to check that
 \[
  \KLF(c_{1,1}) = c_{\myuline{H},\myuline{H}},
 \]
 and that
 \begin{align*}
  \KLF(\mu) &= \myuline{\mu}, &
  \KLF(\eta) &= \myuline{\eta}, \\*
  \KLF(\Delta) &= \myuline{\Delta}, &
  \KLF(\varepsilon) &= \myuline{\varepsilon}, \\*
  \KLF(S) &= \myuline{S}, &
  \KLF(S^{-1}) &= \myuline{S^{-1}}, \\
  \KLF(v_+) &= \myuline{v_+}, &
  \KLF(v_-) &= \myuline{v_-}, \\*
  \KLF(w_+) &= \myuline{w_+}, &
  \KLF(w_-) &= \myuline{w_-}, \\
  \KLF(\lambda) &= \myuline{\lambda}, &
  \KLF(\Lambda) &= \myuline{\Lambda}.
 \end{align*}
 For instance, for the coproduct we need to check that
 \begin{align*}
  \pic{proof_of_algorithm_KTan_1} 
  &= \pic{proof_of_algorithm_KTan_2} 
  = \pic{proof_of_algorithm_KTan_3} 
 \end{align*}
 where the second equality uses the identity
 \[
  R'_j \otimes R'_i \otimes R''_j R''_i = (R'_k)_{(1)} \otimes (R'_k)_{(2)} \otimes R''_k.
 \]
 The rest is left to the reader.
\end{proof}

The algorithm makes it clear that, when $\calC = \mods{H}$ for a unimodular ribbon Hopf algebra $H$, the invariant determined by the restriction of $\KLF$ to the endomorphism set $\RHB(0,0)$ is a special instance of Bobtcheva and Messia's invariant from \cite[Section~6.1]{BM02}, corresponding to the case $z = 1$ and $w = \Lambda$. In particular, \cite[Corollary~2.15]{BM02} immediately gives us the following result.

\begin{corollary}\label{C:co_semisimple}
 For a unimodular ribbon Hopf algebra $H$, the Kerler--Lyubashenko functor $\KLF : \RHB \to \mods{H}$ satisfies
 \begin{align*}
  \KLF(S^2 \times D^2) &= \lambda(1), &
  \KLF(S^1 \times D^3) &= \varepsilon(\Lambda).
 \end{align*}
 In particular, $\KLF(S^2 \times D^2) \neq 0$ if and only if $H$ is cosemisimple (in the sense that $H^*$ is semisimple), and $\KLF(S^1 \times D^3) \neq 0$ if and only if $H$ is semisimple.
\end{corollary}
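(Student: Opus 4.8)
The plan is to compute the two scalars directly using the algorithm of Lemma~\ref{L:4-dim_algorithm}, and then to read off the (co)semisimplicity criteria from standard facts about integrals and cointegrals of finite-dimensional Hopf algebras. First I would recall that, since $\KLF : \RHB \to \mods{H}$ factors through the equivalence $D \circ K$, computing $\KLF(S^2 \times D^2)$ and $\KLF(S^1 \times D^3)$ amounts to computing the values of the induced functor $\KLF : \KTan \to \mods{H}$ on the Kirby tangles representing these two handlebodies (as endomorphisms of the unit object $0 \in \KTan$, i.e.\ elements of $\Bbbk$). The handlebody $S^1 \times D^3$ is obtained from $D^4$ by attaching a single $1$-handle, so it is represented by a single dotted unknot with no undotted components; the handlebody $S^2 \times D^2$ is obtained from $D^4$ by attaching a single $2$-handle along an unknot with framing $0$, so it is represented by a single undotted $0$-framed unknot with no dotted components.

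For $S^1 \times D^3$: removing the dotted unknot costs, by the algorithm, exactly the factor $\varepsilon(\Lambda)$ (this is the $k = 0$ case of the ``removing a dotted component'' step), and nothing else remains, so $\KLF(S^1 \times D^3) = \varepsilon(\Lambda)$. For $S^2 \times D^2$: the $0$-framed undotted unknot is a closed undotted component with trivial framing; passing to the singular diagram introduces no crossings, the component carries no bead coming from an incoming strand, and the final step evaluates a bead-free closed undotted component by the integral, producing the factor $\lambda(1)$; hence $\KLF(S^2 \times D^2) = \lambda(1)$. (Alternatively, as the text remarks, both equalities follow from Bobtcheva--Messia's invariant specialized at $z = 1$, $w = \Lambda$, via \cite[Corollary~2.15]{BM02}, which gives the same two scalars.) This establishes the two displayed formulas.

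It then remains to translate nonvanishing of these scalars into (co)semisimplicity. For $\KLF(S^1 \times D^3) = \varepsilon(\Lambda)$: a finite-dimensional Hopf algebra $H$ is semisimple if and only if $\varepsilon(\Lambda) \neq 0$ for a (left) cointegral $\Lambda$; this is Maschke's theorem for Hopf algebras, see e.g.\ \cite[Theorem~10.3.2]{R12}. For $\KLF(S^2 \times D^2) = \lambda(1)$: dually, $H$ is cosemisimple --- meaning $H^*$ is semisimple --- if and only if $\lambda(1) \neq 0$ for a (left) integral $\lambda \in H^*$, since $\lambda$ is precisely a left cointegral of the dual Hopf algebra $H^*$ and the previous criterion applies to $H^*$; again see \cite[Theorem~10.3.2]{R12}. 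Combining these with the two formulas yields the corollary.

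I do not expect any serious obstacle here: both formulas are immediate consequences of the already-established algorithm once the Kirby-tangle descriptions of $S^1 \times D^3$ and $S^2 \times D^2$ are identified, and the (co)semisimplicity equivalences are textbook facts. The only point requiring a little care is bookkeeping in the algorithm --- making sure that for the lone dotted unknot of $S^1 \times D^3$ the cointegral-insertion step indeed contributes exactly $\varepsilon(\Lambda)$ and nothing more, and that for the lone $0$-framed unknot of $S^2 \times D^2$ no spurious pivotal-element or $R$-matrix beads survive --- but this is routine given Lemma~\ref{L:4-dim_algorithm}.
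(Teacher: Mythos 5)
Your proposal is correct and follows essentially the same route as the paper: identify the Kirby tangles for the two handlebodies (undotted resp.\ dotted $0$-framed unknot), evaluate them to $\lambda(1)$ and $\varepsilon(\Lambda)$ via the algorithm (equivalently, via the Bobtcheva--Messia specialization), and invoke the standard Maschke-type criteria from Radford's book for the (co)semisimplicity equivalences. No gaps.
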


\begin{proof}
 The first claim follows immediately from the remark that the endomorphisms $S^2 \times D^2$ and $S^1 \times D^3$ in $\RHB(0,0)$ are represented by an undotted and by a dotted unknot with framing 0, respectively. Then, the second claim follows immediately from \cite[Corollary~10.3.3.(b)]{R12}.
\end{proof}

\subsection{3-Dimensional case}\label{S:3-dim_case} Let us suppose now that $H$ is factorizable, and let us compute the intertwiner $\KLFts(T,\sig) : \ad^{\otimes m} \to \ad^{\otimes m'}$ associated with a signed top tangle $(T,\sig) \in \ETTH(m,m')$. Let us start by considering a vector
\[
 x_{1} \otimes \ldots \otimes x_m \in \ad^{\otimes m}
\]
and a regular diagram of $T$. First of all, if the number of strands of $T$ running along the $i$th 1-han\-dle of $H_m$ is $k_i$, we insert beads labeled by components of the $(k_i-1)$th iterated coproduct $(x_i)_{(1)} \otimes \ldots \otimes (x_i)_{(k_i)}$ for every integer $1 \leqs i \leqs m$, as shown:
\[
 \pic{bead_presentation_TTan_0}
\]
When $k_i = 0$, we add a multiplicative factor of $\varepsilon(x_{k_i})$ in front of $T$. Next, we pass to the singular version of $T$, while also inserting beads labeled by components of the R-matrix like in the case of Kirby tangles. Then, we attach a complementary 2-han\-dle canceling each 1-han\-dle of $H_m$, as shown:
\begin{align*}
 \pic{bead_presentation_TTan_5} &\mapsto \pic{bead_presentation_TTan_6}
\end{align*}
The rest of the algorithm is completely analogous to the case of Kirby tangles. In the end, we are left with a planar tangle $B(T)$ carrying at most a single bead on each of its components.

\begin{lemma}\label{L:3-dim_algorithm}
 The intertwiner $\KLFts(T,\sig) : \ad^{\otimes m} \to \ad^{\otimes m'}$ satisfies
 \[
  \KLFts(T,\sig)(x_1 \otimes \ldots \otimes x_m) = 
  \lambda(v_+)^{-\sig} \left( \prod_{j=1}^k \lambda(z_j) \right)
  x'_1 \otimes \ldots \otimes x'_{m'}
 \]
 for every $x_{1} \otimes \ldots \otimes x_m \in \ad^{\otimes m}$, where
 \begin{equation*}
  B(T) = \pic{bead_presentation_TTan_21}
 \end{equation*}
\end{lemma}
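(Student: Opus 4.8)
The plan is to derive this from Lemma~\ref{L:4-dim_algorithm} together with the commutativity of the diagram of functors recalled in Sections~\ref{S:kirby_top_tangles} and \ref{S:Kirby_top_functors}. The point is that the algorithm prescribed above for $(T,\sig) \in \ETTH(m,m')$ is, apart from the bookkeeping of the signature defect, precisely the algorithm of Lemma~\ref{L:4-dim_algorithm} applied to a suitable Kirby-tangle representative of $(T,\sig)$. Since the eraser functor $E : \KTan \to \ETTH$ is full, following its fullness proof in Section~\ref{S:handle_surgery_functors} we may write $(T,\sig) = E(\hat{T})$ with $\hat{T} = \check{T} \disjun O_+^{\otimes \sig}$ if $\sig \geqs 0$ and $\hat{T} = \check{T} \disjun O_-^{\otimes(-\sig)}$ if $\sig < 0$, where $\check{T} \in \KTan(m,m')$ is any Kirby tangle with $H_m(\check{T}) = T$. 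Transporting $\KLF$ and $\KLFts$ along the equivalences $K$ and $T^\sigma$ and using $\KLF = \KLFts \circ \tQtnt$, one obtains $\KLFts \circ E = \KLF$ as functors $\KTan \to \mods{H}$, hence
\[
 \KLFts(T,\sig) = \KLF(\hat{T}) = \KLF(\check{T}) \cdot \KLF(O_\pm)^{|\sig|},
\]
the last equality by braided monoidality, since $O_\pm$ is an endomorphism of $0 \in \KTan$, so that $\KLF(O_\pm)$ is a scalar.

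Next I would run the algorithm of Lemma~\ref{L:4-dim_algorithm} on $\check{T}$ and check that it coincides, step by step, with the $3$-dimensional algorithm run on $(T,\sig)$. The two can differ only at the $1$-handles of $H_m$: erasing the dot of $\check{T}$ (that is, $1/2$-handle trading) turns the pattern ``dotted circle encircling the $k_i$ strands entering the $i$th $1$-handle'' into the pattern ``complementary $2$-handle canceling that $1$-handle'', and the single input bead $x_i$ of the $4$-dimensional algorithm gets spread, via the $(k_i-1)$th iterated coproduct, over the $k_i$ strands running along the $1$-handle, with the degenerate case $k_i = 0$ contributing the factor $\varepsilon(x_i)$ --- which is exactly the first step of the $3$-dimensional algorithm. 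After this local translation, all remaining steps (passing to the singular diagram with R-matrix beads, replacing kinks by pivotal-element beads, collecting beads and closing up) are carried out identically, so the two procedures produce the same bead-decorated planar tangle $B(T)$; moreover the $m$ arcs of $B(\check{T})$ that become closed after reconnecting their ends through the $1$-handles of $H_m$ absorb the beads $y_i$ together with the coproduct-distributed copies of $x_i$ and join the family of closed components, so that the factor $\prod_{i=1}^m \lambda(y_i x_i)\prod_j \lambda(z_j)$ of Lemma~\ref{L:4-dim_algorithm} collapses to the single product $\prod_{j=1}^k \lambda(z_j)$ appearing in the statement.

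It then remains to evaluate the scalar $\KLF(O_\pm)$ by Lemma~\ref{L:4-dim_algorithm}: $O_\pm$ is a $\pm 1$-framed unknot, hence is sent to $\lambda(v_\mp)$. In the factorizable case the integral $\lambda$ has been normalized by $(\lambda \otimes \lambda)\circ w_+ = \id_{\one}$ (Proposition~\ref{P:modular_Hopf_algebra}), which forces $\lambda(v_+)\lambda(v_-) = 1$; hence $\KLF(O_\pm)^{|\sig|} = \lambda(v_+)^{-\sig}$, completing the proof. I expect the main obstacle to be the bead-level verification of the middle step: one must check carefully that dot-erasing, insertion of the canceling $2$-handles, and redistribution of the input beads through the iterated coproducts is compatible with every local bead-sliding rule of Lemma~\ref{L:4-dim_algorithm}, so that no spurious antipodes or pivotal elements are introduced. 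This amounts to re-proving the well-definedness of $E$ at the level of beaded diagrams, and uses coassociativity of $\Delta$ together with the trace identity $\lambda(ab) = \lambda(bS^2(a))$; the rest is routine.
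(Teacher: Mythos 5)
Your route is genuinely different from the paper's. The paper proves the lemma directly on $\ETTH$: it introduces handle moves to reduce to $k_i=1$, verifies that the bead algorithm respects composition, identities, and handle moves (so that it defines \emph{some} functor out of $\ETTH$), and then checks that the generators are sent to the transmutation structure morphisms $\myuline{\mu},\myuline{\Delta},\ldots$, concluding by the presentation theorem. You instead lift $(T,\sig)$ along the eraser functor to $\check{T}\otimes O_{\pm}^{\otimes|\sig|}$, use the commutativity $\KLFts\circ E=\KLF$ (which does follow from the big diagram together with Theorem~\ref{T:KL_functor_3-dim}), and import Lemma~\ref{L:4-dim_algorithm}. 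This is legitimate and arguably more economical: functoriality is outsourced to the $4$-dimensional case, and the new content is reduced to the local comparison of the two algorithms at the $1$-handles plus the evaluation of the stabilization scalar. Your sketch of the middle step correctly identifies the needed ingredients (coassociativity and the cointegral identity $\lambda(xS(\Lambda_{(1)}))\Lambda_{(2)}=x$, which is exactly what the paper's displayed computations use), though it is only a sketch.

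There is, however, one genuine gap: you assert that the normalization $(\lambda\otimes\lambda)\circ w_+=\id_{\one}$ \emph{forces} $\lambda(v_+)\lambda(v_-)=1$. It does not, at least not formally: $(\lambda\otimes\lambda)(w_+)$ and $\lambda(v_+)\lambda(v_-)$ are a priori independent scalars, and for $u_q\fsl_2$ with $r\equiv 0\bmod 8$ one has $(\lambda\otimes\lambda)(M)=-1$ while $\lambda(v_+)=\lambda(v_-)=0$, so no rescaling of $\lambda$ can make the two agree. The equality $\lambda(v_+)\lambda(v_-)=(\lambda\otimes\lambda)(w_+)$ is itself one of the modular relations and needs an argument; for instance, the relation $(\lambda\circ v_+)\otimes(\lambda\circ v_-)=\id_{\one}$ holds in $\tAlgt\simeq\ETTH$ because $(O_+\sqcup O_-,0)=(\varnothing,0)$ by the signed Kirby moves \eqref{E:rel_K+1} and \eqref{E:rel_K-1}, and then applying the already-constructed braided monoidal functor $\KLFts$ of Theorem~\ref{T:KL_functor_3-dim} transports it to $\mods{H}$. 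Without some such step your conversion of $\lambda(v_\mp)^{|\sig|}$ into $\lambda(v_+)^{-\sig}$ is unjustified. Relatedly, you should pin down which of $v_\pm$ the algorithm actually assigns to $O_\pm$: your choice ($O_+\mapsto\lambda(v_-)$) is the one forced by consistency with \eqref{E:rel_K+1} and the stated formula, but it must be checked against the paper's framing conventions (compare with the evaluation of the $+1$-framed unknot in Corollary~\ref{C:CP2_S2_times_S2}).
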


\begin{proof}
 First of all, we start by introducing an operation on top tangles, called the \textit{handle trick}, defined as follows:
 \begin{align*}
  \pic{algorithm_TTan_1} &\leftrightsquigarrow \pic{algorithm_TTan_2}
 \end{align*}
 Remark that, if $T'$ is obtained from $T$ by a sequence of handle tricks, then 
 \[
  (T',\sig) = (T,\sig)
 \]
 for every $\sig \in \Z$. This means we can suppose $k_i = 1$ for all integers $1 \leqs i \leqs m$. Therefore, we can rewrite $B(T)$ as
 \begin{equation*}
  B(T) = \pic{bead_presentation_TTan_22}
 \end{equation*}
 Now, if 
 \begin{align*}
  &\KLFts(T,\sig)(x_1 \otimes \ldots \otimes x_m) \\*
  &\hspace*{\parindent} = 
  \lambda(v_+)^{-\sig} \left( \prod_{i=1}^m \lambda(y_i x_i) \right) 
  \left( \prod_{j=1}^k \lambda(z_j) \right)
  x'_1 \otimes \ldots \otimes x'_{m'}
 \end{align*}
 and
 \begin{align*}
  &\KLFts(T',\sig')(x'_1 \otimes \ldots \otimes x'_{m'}) \\*
  &\hspace*{\parindent} = 
  \lambda(v_+)^{-\sig'} \left( \prod_{i=1}^{m'} \lambda(y'_i x'_i) \right) 
  \left( \prod_{j=1}^{k'} \lambda(z'_j) \right)
  x''_1 \otimes \ldots \otimes x''_{m''},
 \end{align*}
 then clearly
 \begin{align*}
  &\KLFts((T',\sig') \circ (T,\sig))(x_1 \otimes \ldots \otimes x_m)  \\*
  &\hspace*{\parindent} = \KLFts(T',\sig')(\KLFts(T,\sig)(x_1 \otimes \ldots \otimes x_m)).
 \end{align*}
 Furthermore, we clearly have
 \begin{align*}
  \KLFts(\id_m)(x_1 \otimes \ldots \otimes x_m) 
  &= x_1 \otimes \ldots \otimes x_m.
 \end{align*}
 Remark that handle tricks do not affect computations, since
 \begin{align*}
  &\pic{algorithm_TTan_3} \\*[10pt]
  &\pic{algorithm_TTan_4}
 \end{align*}
 and we can use again \cite[Theorem~10.2.2.(c)]{R12}, like in the proof of Lemma~\ref{L:4-dim_algorithm}. This means that the algorithm indeed defines a functor with source $\ETTH$. Then, it is sufficient to check that
 \[
  \KLFts(c_{1,1}) = c_{\myuline{H},\myuline{H}},
 \]
 and that
 \begin{align*}
  \KLFts(\mu) &= \myuline{\mu}, &
  \KLFts(\eta) &= \myuline{\eta}, \\*
  \KLFts(\Delta) &= \myuline{\Delta}, &
  \KLFts(\varepsilon) &= \myuline{\varepsilon}, \\*
  \KLFts(S) &= \myuline{S}, &
  \KLFts(S^{-1}) &= \myuline{S^{-1}}, \\
  \KLFts(v_+) &= \myuline{v_+}, &
  \KLFts(v_-) &= \myuline{v_-}, \\*
  \KLFts(w_+) &= \myuline{w_+}, &
  \KLFts(w_-) &= \myuline{w_-}, \\
  \KLFts(\lambda) &= \myuline{\lambda}, &
  \KLFts(\Lambda) &= \myuline{\Lambda}.
 \end{align*}
 For instance, for the antipode we need to check that
 \begin{align*}
  \pic{proof_of_algorithm_TTan_1} 
  &= \pic{proof_of_algorithm_TTan_2} 
  = \pic{proof_of_algorithm_TTan_3} 
 \end{align*}
 where the second equality uses the identity
 \[
  R'_j \otimes R'_i \otimes R''_j R''_i = (R'_k)_{(1)} \otimes (R'_k)_{(2)} \otimes R''_k.
 \]
 The rest is left to the reader.
\end{proof}

The algorithm makes it clear that, when $\calC = \mods{H}$ for a factorizable ribbon Hopf algebra $H$, the invariant determined by the restriction of $\KLFts$ to the endomorphism set $\FRCob(0,0)$ is a special instance of the Hennings invariant of \cite[Proposition~5.2]{H96}, corresponding to the case $z = 1$.

\section{Explicit formulas for small quantum groups}\label{S:explicit_formulas}

In this section, we provide concrete examples of unimodular ribbon categories, and exhibit explicit formulas required for computations.

\subsection{General Lie algebras}\label{S:general_Lie_algebras} Let $\frakg$ be a simple complex Lie algebra of rank $n$, let $\frakh$ be a Cartan subalgebra of $\frakg$, and let $\{ \alpha_1, \ldots, \alpha_n \} \subset \Phi_+$ denote the sets of simple and positive roots respectively. Let $\langle \_,\_ \rangle$ denote the rescaled Killing form on $\frakh^*$ satisfying $\langle \alpha_i,\alpha_i \rangle = 2$ for every short simple root $\alpha_i$, and let
\[
 a_{ij} = \frac{2 \langle \alpha_i,\alpha_j \rangle}{\langle \alpha_i,\alpha_i \rangle}
\]
denote the $ij$th entry of the corresponding Cartan matrix for all $1 \leqs i,j \leqs n$. For every $\alpha \in \Phi_+$ we set
\[
 d_{\alpha} := \frac{\langle \alpha,\alpha \rangle}{2},
\]
and for every integer $1 \leqslant i \leqslant n$ we use the short notation $d_i := d_{\alpha_i}$. For instance, when $\frakg = \fsl_{n+1}$, then $d_i = 1$ for all $1 \leqs i \leqs n$. We denote with $\{ \omega_1,\ldots,\omega_n \} \subset \frakh^*$ the set of fundamental dominant weights, which are determined by the condition $\langle \alpha_i,\omega_j \rangle = d_i \delta_{ij}$ for all integers $1 \leqs i,j \leqs n$. We denote with $\Lambda_R$ the root lattice, which is the subgroup of $\frakh^*$ generated by simple roots, and we denote with $\Lambda_W$ the weight lattice, which is the subgroup of $\frakh^*$ generated by fundamental dominant weights. If $q$ is a formal parameter, then for every $\alpha \in \Phi_+$ we set $q_{\alpha} := q^{d_{\alpha}}$, for all $k \geqs \ell \in \N$ we define
\begin{gather*}
 \{ k \}_{\alpha} := q_{\alpha}^k - q_{\alpha}^{-k}, \quad [k]_{\alpha} := \frac{\{ k \}_{\alpha}}{\{ 1 \}_{\alpha}},
 \quad [k]_{\alpha}! := [k]_{\alpha}[k-1]_{\alpha}\cdots[1]_{\alpha}, \\
 \sqbinom{k}{\ell}_{\alpha} := \frac{[k]_{\alpha}!}{[\ell]_{\alpha}![k-\ell]_{\alpha}!},
\end{gather*}
and for every integer $1 \leqs i \leqs n$ we introduce the short notation 
\[
 q_i := q_{\alpha_i}, \quad \{ k \}_i := \{ k \}_{\alpha_i}, \quad [k]_i := [k]_{\alpha_i},
 \quad [k]_i! := [k]_{\alpha_i}!, \quad
 \sqbinom{k}{\ell}_i := \sqbinom{k}{\ell}_{\alpha_i}.
\]
Let $U_q \frakg$ denote the \textit{quantum group of $\frakg$}, which is the $\bbC(q)$-algebra with generators
\[
 \{ K_i,K_i^{-1},E_i,F_i \mid 1 \leqslant i \leqslant n \}
\]
and relations
\begin{gather*}
 K_i K_i^{-1} = K_i^{-1} K_i = 1, \quad [K_i,K_j] = 0, \\
 K_i E_j K_i^{-1} = q_i^{a_{ij}} E_j, \quad
 K_i F_j K_i^{-1} = q_i^{- a_{ij}} F_j, \\
 [E_i,F_j] = \delta_{ij} \frac{K_i - K_i^{-1}}{q_i-q_i^{-1}}
\end{gather*}
for all integers $1 \leqslant i,j \leqslant n$ and
\begin{align*}
 \displaystyle \sum_{k=0}^{1 - a_{ij}} (-1)^k \sqbinom{1-a_{ij}}{k}_i E_i^k E_j E_i^{1-a_{ij}-k} &= 0, \\
 \displaystyle \sum_{k=0}^{1 - a_{ij}} (-1)^k \sqbinom{1-a_{ij}}{k}_i F_i^k F_j F_i^{1-a_{ij}-k} &= 0
\end{align*}
for all integers $1 \leqslant i,j \leqslant n$ with $i \neq j$. This algebra was first introduced by Drinfeld \cite[Section~4]{D85} and Jimbo \cite[Section~1]{J85}, see also Lusztig \cite[Section~1]{L90} and De Concini--Kac \cite[Section~1]{DK90}. We can make $U_q \frakg$ into a Hopf algebra by setting
\begin{align*}
 \Delta(K_i) &= K_i \otimes K_i, & \varepsilon(K_i) &= 1, & S(K_i) &= K_i^{- 1}, \\
 \Delta(E_i) &= E_i \otimes K_i + 1 \otimes E_i, & \varepsilon(E_i) &= 0, & S(E_i) &= -E_i K_i^{-1}, \\
 \Delta(F_i) &= F_i \otimes 1 + K_i^{-1} \otimes F_i, & \varepsilon(F_i) &= 0, & S(F_i) &= - K_i F_i
\end{align*}
for all integers $1 \leqslant i \leqslant n$. Remark that both Lusztig and De Concini--Kac use a different coproduct and antipode, while we follow the conventions of \cite[Section~9.1]{CP95} and \cite[Section~5.7]{EGNO15}. For every $\mu = m_1 \alpha_1 + \ldots + m_n \alpha_n \in \Lambda_R$ we set
\[
 K_\mu := \prod_{i=1}^n K_i^{m_i} \in U_q \frakg,
\]
and, furthermore, we fix the choice of a reduced expression for the longest element of the Weyl group $W$ of $\frakg$ associated with $\frakh$, which determines an ordering of the set of positive roots $\Phi_+ = \{ \beta_1,\ldots,\beta_N \}$ allowing us to introduce, for every $\beta_k \in \Phi_+$, root vectors
\[
 E_{\beta_k} , F_{\beta_k} \in U_q \frakg,
\]
see \cite[Appendix]{DGP18} for more details.

Let us fix now an integer $r > 2 \max \{ d_1,\ldots,d_n \}$, and let us specialize 
\[
 q = e^{\frac{2 \pi i}{r}}.
\]
First of all, we consider the bilinear pairing
\begin{align*}
 \Lambda_R \times \Lambda_W &\to \bbC^\times \\*
 (\mu,\nu) &\mapsto q^{\langle \mu,\nu \rangle}.
\end{align*}
Remark that
\[
 \Lambda_R^\perp 
 := \{ \nu \in \Lambda_W \mid q^{\langle \mu,\nu \rangle} = 1
 \Forall \mu \in \Lambda_R \} 
 = \left\langle \frac{r}{\gcd(r,d_i)} \omega_i \biggm| 1 \leqs i \leqs n \right\rangle.
\]
Next, let us set 
\[
 r_\alpha := \frac{r}{\gcd(r,2 d_\alpha)}
\]
for every $\alpha \in \Phi_+$. Then, the \textit{small quantum group} $u_q \frakg$ is the $\bbC$-algebra defined by Lusztig in \cite[Section~8.2]{L90}, and obtained from $U_q \frakg$ by adding relations
\[
 K_\mu = 1, \quad E_\alpha^{r_\alpha} = F_\alpha^{r_\alpha} = 0
\]
for every $\mu \in \Lambda_R \cap \Lambda_R^\perp$ and every $\alpha \in \Phi_+$. Clearly $u_q \frakg$ inherits from $U_q \frakg$ the structure of a Hopf algebra. A Poincaré--Birkhoff--Witt basis of $u_q \frakg$ is given by
\[
 \left\{ \left( \prod_{k = 1}^N F_{\beta_k}^{c_k} \right) K_{\mu} \left( \prod_{k = 1}^N E_{\beta_k}^{b_k} \right) \Biggm| 
 \begin{array}{l}
  \mu \in \Lambda_R / (\Lambda_R \cap \Lambda_R^\perp), \\
  0 \leqs b_k,c_k < r_{\beta_k}
 \end{array} \right\},
\]
as follows from \cite[Theorem~8.3]{L90}. If we consider $D \in u_q \frakg \otimes u_q \frakg$ given by
\[
 D := \frac{1}{\left| \Lambda_R / (\Lambda_R \cap \Lambda_R^\perp) \right|} \sum_{\mu,\mu' \in \Lambda_R / (\Lambda_R \cap \Lambda_R^\perp)} q^{- \langle \mu,\mu' \rangle} K_\mu \otimes K_{\mu'}
\]
and $\Theta \in u_q \frakg \otimes u_q \frakg$ given by
\[
 \Theta := \sum_{b_1=0}^{r_{\beta_1}-1} \cdots \sum_{b_N=0}^{r_{\beta_N}-1} 
 \left( \prod_{k=1}^N \frac{\{ 1 \}_{\beta_k}^{b_k}}{[b_k]_{\beta_k}!} q^{\frac{b_k(b_k-1)}{2}} \right) 
 \left( \prod_{k=1}^N E_{\beta_k}^{b_k} \right) \otimes 
 \left( \prod_{k=1}^N F_{\beta_k}^{b_k} \right)
\]
then $R := D \Theta \in u_q \frakg \otimes u_q \frakg$ is an R-matrix for $u_q \frakg$, see \cite[Chapter~4]{L93} and \cite[Section~A.2]{L94}. Remark that Lyubashenko obtains the R-matrix $\Theta D$, because he uses Lusztig's coproduct defined by $\tilde{\Delta}(x) := D^{-1} \Delta(x) D$, compare with \cite[Proposition~A.1.3]{L94}. Then, it can be shown that $D \Theta$ is an R-matrix for $\Delta$ if and only if $\Theta D$ is an R-matrix for $\tilde{\Delta}$. A pivotal element $g \in u_q \frakg$ is given by $g = K_{2 \rho}$ where
\[
 \rho := \frac{1}{2} \sum_{k=1}^N \beta_k.
\]
Formulas appearing in Section~\ref{S:Hopf_algebras} allow us to express the ribbon element and its inverse $v_+, v_- \in u_q \frakg$ in terms of the R-matrix $R$ and the pivotal element $g$. Next, thanks to \cite[Proposition~A.5.1]{L94}, every non-zero left integral $\lambda$ of $u_q \frakg$ is given by
\[
 \lambda \left( K_\mu \left( \prod_{k = 1}^N F_{\beta_k}^{c_k} \right) \left( \prod_{k = 1}^N E_{\beta_k}^{b_k} \right) \right) = \xi \delta_{\mu,-2 \rho} 
 \prod_{k=1}^N \delta_{b_k,r_{\beta_k}-1} \prod_{k=1}^N \delta_{c_k,r_{\beta_k}-1}
\]
for some $\xi \in \bbC^\times$. Remark that our formula agrees with Lyubashenko's one, despite the use of different coproducts. Indeed, Lusztig's coproduct can be written as $\tilde{\Delta} := (\psi \otimes \psi) \circ \Delta^\op \circ \psi$ for the involutive $\bbQ$-algebra isomorphism $\psi : u_q \frakg \to u_q \frakg$ defined by $\psi(q)=q^{-1}$ and by $\psi(E_i) = E_i$, $\psi(F_i) = F_i$, $\psi(K_i) = K_i^{-1}$. Then, $\varphi$ is a left integral for $\tilde{\Delta}$ if and only if $\varphi \circ \psi$ is a right integral for $\Delta$, and $\varphi$ is a right integral for $\Delta$ if and only if $\varphi(K_{4 \rho} \_)$ is a left integral for $\Delta$. The last equivalence follows from \cite[Proposition~A.5.2]{L94}, which tells us that $u_q \frakg$ is unimodular, and that every non-zero two-sided cointegral $\Lambda$ of $u_q \frakg$ satisfying $\lambda(\Lambda) = 1$ is given by
\[
 \Lambda := \xi^{-1} \sum_{\mu \in \Lambda_R / (\Lambda_R \cap \Lambda_R^\perp)} K_\mu \left( \prod_{k = 1}^N F_{\beta_k}^{r_{\beta_k}-1} \right) \left( \prod_{k = 1}^N E_{\beta_k}^{r_{\beta_k}-1} \right)
\]
for some $\xi \in \bbC^\times$. Furthermore, thanks to \cite[Corollary~A.3.3]{L94}, $u_q \frakg$ is factorizable if and only if
\[
 \left\{ K_{2 \mu} \in u_q \frakg \mid \mu \in \Lambda_R \right\}
 = 
 \left\{ K_\mu \in u_q \frakg \mid \mu \in \Lambda_R \right\}.
\]

\subsection{Case \texorpdfstring{$\frakg = \fsl_2$}{g = sl(2)}}\label{S:small_quantum_sl2} When $\frakg = \fsl_2$, computations are easier and more explicit. In particular, as explained in \cite[Section~B.1]{L94}, the ribbon Hopf algebra $u_q \fsl_2$ is factorizable if $r \not\equiv 0 \mod 4$, it is twist non-degenerate but not factorizable if $r \equiv 4 \mod 8$, and it is twist degenerate if $r \equiv 0 \mod 8$. Indeed, if we set
\begin{align*}
 r' &:= \frac{r}{\gcd(r,2)} = 
 \begin{cases}
  \frac{r}{2} & \mbox{ if } r \equiv 0 \mod 2, \\
  r & \mbox{ if } r \equiv 1 \mod 2,
 \end{cases} \\*
 r'' &:= \frac{r}{\gcd(r,4)} = 
 \begin{cases}
 \frac{r}{4} & \mbox{ if } r \equiv 0 \mod 4, \\
  \frac{r}{2} & \mbox{ if } r \equiv 2 \mod 4, \\
  r & \mbox{ if } r \equiv 1 \mod 2,
 \end{cases}
\end{align*}
then the small quantum group $u_q \fsl_2$ has generators $\{ K,K^{-1},E,F \}$ and relations
\begin{align*}
 K K^{-1} &= K^{-1} K = 1, &
 K E K^{-1} &= q^2 E, &
 K F K^{-1} &= q^{-2} F, \\
 [E,F] &= \frac{K - K^{-1}}{q-q^{-1}} &
 K^{r'} &= 1, &
 E^{r'} &= F^{r'} = 0.
\end{align*}
The PBW basis of $u_q \fsl_2$ is
\[
 \left\{ E^a F^b K^c \mid \ 0 \leqs a,b,c \leqs r' - 1 \right\}.
\]
Specializing Lusztig's formulas from Section~\ref{S:general_Lie_algebras}, the R-matrix $R \in u_q \fsl_2 \otimes u_q \fsl_2$ and its inverse $R^{-1} \in u_q \fsl_2 \otimes u_q \fsl_2$ are given by
\begin{align*}
 R &= \frac{1}{r'} \sum_{a,b,c=0}^{r'-1} \frac{\{ 1 \}^a}{[a]!}
 q^{\frac{a(a-1)}{2} - 2bc} K^b E^a \otimes K^c F^a, \\*
 R^{-1} &= \frac{1}{r'} \sum_{a,b,c=0}^{r'-1} \frac{\{ -1 \}^a}{[a]!}
 q^{-\frac{a(a-1)}{2} + 2bc} E^a K^b \otimes F^a K^c.
\end{align*}
Our preferred non-zero left integral $\lambda$ of $u_q \frakg$ is given by
\[
 \lambda \left( E^a F^b K^c \right) = \lambda \left( F^b E^a K^c \right) = \frac{\sqrt{r''} [r'-1]!}{\{ 1 \}^{r'-1}} \delta_{a,r'-1} \delta_{b,r'-1} \delta_{c,r'-1},
\]
and our preferred non-zero two-sided cointegral $\Lambda$ of $u_q \frakg$ satisfying $\lambda(\Lambda) = 1$ is given by
\[
 \Lambda := \frac{\{ 1 \}^{r'-1}}{\sqrt{r''} [r'-1]!} \sum_{a=0}^{r'-1} E^{r'-1} F^{r'-1} K^a.
\]
Now Lemma~\ref{L:ribbon} in Appendix~\ref{A:ribbon_monodromy} immediately implies the following.

\begin{lemma}\label{L:stabilization}
 The stabilization coefficients $\lambda(v_+),\lambda(v_-) \in \Bbbk$ are given by
 \begin{align*}
  \lambda(v_+) &=
  \begin{cases}
   i^{\frac{r-1}{2}} q^{\frac{r+3}{2}} \\
   \left( \frac{2}{r'} \right) i^{\frac{r'-1}{2}} q^{\frac{r'+3}{2}} \\
   - q^{\frac{r''+3}{2}} \\
   0
  \end{cases} &
  \lambda(v_-) &=
  \begin{cases}
   i^{-\frac{r-1}{2}} q^{-\frac{r+3}{2}} & \mbox{ if } r \equiv 1 \mod 2, \\
   \left( \frac{2}{r'} \right) i^{-\frac{r'-1}{2}} q^{-\frac{r'+3}{2}} & \mbox{ if } r \equiv 2 \mod 4, \\
   q^{-\frac{r''+3}{2}} & \mbox{ if } r \equiv 4 \mod 8, \\
   0 & \mbox{ if } r \equiv 0 \mod 8,
  \end{cases}
 \end{align*}
 where $\left( \frac{2}{r'} \right)$ is the Jacobi symbol of $2$ modulo $r'$.
\end{lemma}

\begin{proof}
 If $r \equiv 1 \mod 2$, Equation~\eqref{E:ribbon_1} gives
 \begin{align*}
  \lambda(v_+) 
  &= i^{\frac{r-1}{2}} q^{-\frac{(r+2)(r-1)}{2}+\frac{r+1}{2}(-1)^2}
  = i^{\frac{r-1}{2}} q^{1+\frac{r+1}{2}} 
  = i^{\frac{r-1}{2}} q^{\frac{r+3}{2}},
 \end{align*}
 while Equation~\eqref{E:inverse_ribbon_1} gives
 \begin{align*}
  \lambda(v_-) 
  &= \overline{\lambda(v_+)}
  = i^{-\frac{r-1}{2}} q^{-\frac{r+3}{2}}.
 \end{align*}
 If $r \equiv 2 \mod 4$, then $r' \equiv 1 \mod 2$, and furthermore
 \[
  q^{r'} = -1.
 \]
 Then Equation~\eqref{E:ribbon_2} gives
 \begin{align*}
  \lambda(v_+) 
  &= \left( \frac{2}{r'} \right) i^{\frac{r'-1}{2}} q^{-\frac{(r'+2)(r'-1)}{2}+\frac{(r'+1)^2}{2}(2r'-1)^2} \\*
  &= \left( \frac{2}{r'} \right) i^{\frac{r'-1}{2}} q^{-\frac{(r'+2)(r'-1)}{2}+\frac{(r'+1)^2}{2}} \\*
  &= \left( \frac{2}{r'} \right) i^{\frac{r'-1}{2}} q^{\frac{r'+3}{2}},
 \end{align*}
 while Equation~\eqref{E:inverse_ribbon_2} gives
 \begin{align*}
  \lambda(v_-) 
  &= \overline{\lambda(v_+)}
  = \left( \frac{2}{r'} \right) i^{-\frac{r'-1}{2}} q^{-\frac{r'+3}{2}}.
 \end{align*}
 If $r \equiv 4 \mod 8$, then $r' \equiv 2 \mod 4$ and $r'' \equiv 1 \mod 2$, and furthermore
 \[
  q^{r''} = i.
 \]
 Then Equation~\eqref{E:ribbon_3} gives
 \begin{align*}
  \lambda(v_+) 
  &= (-1)^{r'-1} i^{\frac{r''-1}{2}} q^{-\frac{(r'+2)(r'-1)}{2}+\frac{(r''+1)^3}{2}(2r'-1)^2} \\*
  &= - i^{\frac{r''-1}{2}} q^{-(r''+1)(2r''-1)+\frac{(r''+1)^3}{2}} \\*
  &= - i^{\frac{r''-1}{2}} q^{-(2r''^2+r''-1)+\frac{r''+1}{2}(r''^2+2r''+1)} \\*
  &= - i^{\frac{r''-1}{2} -(2r''+1) + \frac{r''+1}{2}(r'' + 2)} q^{1+\frac{r''+1}{2}} \\*
  &= - i^{\frac{r''-1}{2} + \frac{r''^2-r''}{2}} q^{\frac{r''+3}{2}} \\*
  &= - i^{\frac{r''^2-1}{2}} q^{\frac{r''+3}{2}} \\*
  &= - q^{\frac{r''+3}{2}},
 \end{align*}
 while Equation~\eqref{E:inverse_ribbon_3} gives
 \begin{align*}
  \lambda(v_-) 
  &= - \overline{\lambda(v_+)}
  = q^{-\frac{r''+3}{2}}.
 \end{align*}
 Finally, if $r \equiv 0 \mod 8$, then Equations~\eqref{E:ribbon_4} and \eqref{E:inverse_ribbon_4} give
 \[
  \lambda(v_+) = \lambda(v_-) = 0,
 \]
 because the coefficient in front of $F^{r-1} E^{r-1} K^{r-1}$ vanishes for both $v_+$ and $v_-$.
\end{proof}

Furthermore, Lemma~\ref{L:monodromy} in Appendix~\ref{A:ribbon_monodromy} immediately implies the following.

\begin{lemma}\label{L:S^2_times_S^2}
 The Hopf link coefficient $\lambda(S(R''_j R'_i)) \lambda(R'_j R''_i) \in \Bbbk$ is given by
 \begin{align*}
  \lambda(S(R''_j R'_i)) \lambda(R'_j R''_i) &= (-1)^{r'-1}
 \end{align*}
\end{lemma}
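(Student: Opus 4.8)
The plan is to establish Lemma~\ref{L:S^2_times_S^2} by the short computation that the reduction to Lemma~\ref{L:monodromy} hides, namely to evaluate $(\lambda \otimes \lambda)(M)$ directly from the explicit $R$-matrix of Section~\ref{S:small_quantum_sl2}. By definition of the $M$-matrix, $M = R_{21}R_{12}$; writing $R = D\Theta$ with $D = \tfrac1{r'}\sum_{b,c}q^{-2bc}K^b\otimes K^c$ symmetric under the flip of the two tensor factors and $\Theta = \sum_a \tfrac{\{1\}^a}{[a]!}q^{a(a-1)/2}E^a\otimes F^a$, one gets $M = (D\,\Theta_{21})(D\,\Theta)$. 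The key feature of the left integral is that
\[
 \lambda(E^aF^bK^c) = \frac{\sqrt{r''}\,[r'-1]!}{\{1\}^{r'-1}}\,\delta_{a,r'-1}\,\delta_{b,r'-1}\,\delta_{c,r'-1}
\]
is supported on the single top PBW monomial, so $(\lambda\otimes\lambda)(M)$ is nothing but the coefficient of $(E^{r'-1}F^{r'-1}K^{r'-1})^{\otimes 2}$ in the PBW normal form of $M$, multiplied by $\bigl(\tfrac{\sqrt{r''}\,[r'-1]!}{\{1\}^{r'-1}}\bigr)^{2}$. Reading off this coefficient from the normal form of $M$ is exactly what Lemma~\ref{L:monodromy} provides, so at the level of the present statement the argument is a direct substitution; below I indicate how the normal form is obtained.

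To extract that coefficient I would put $M$ into PBW order just far enough. Since $\lambda$ detects only the monomial of maximal $E$-degree and maximal $F$-degree, the only contributing summand of $(D\,\Theta_{21})(D\,\Theta)$ is $a = b = r'-1$, and within it only the leading term $E^{r'-1}F^{r'-1}$ of the reordering of $F^{r'-1}E^{r'-1}$ survives (the commutator corrections from $[E,F] = \tfrac{K-K^{-1}}{q-q^{-1}}$ drop both degrees and are killed by $\lambda$). Commuting the torus elements through via $K^mE^n = q^{2mn}E^nK^m$ and $K^mF^n = q^{-2mn}F^nK^m$, the factors $\{1\}^{r'-1}$, $[r'-1]!$ and $\sqrt{r''}$ cancel against the square of the $\Theta$-coefficient $\tfrac{\{1\}^{r'-1}}{[r'-1]!}q^{(r'-1)(r'-2)/2}$, and after the linear substitution $(s,s')\mapsto(r'-1-p,\,r'-1-p')$ in the two torus summations one is left with
\[
 (\lambda\otimes\lambda)(M) = \frac{r''}{r'^{2}}\,q^{-r'(r'-1)}\sum_{p,p'\in\Z/r'} q^{-4p'(p-r'+1)}.
\]

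It then remains to evaluate the Gauss-type sum. Using $q^{2r'}=1$, the inner sum over $p'$ vanishes unless $q^{4(p-r'+1)}=1$, so the double sum equals $r'$ times the number of $t\in\Z/r'$ with $r\mid 4t$; that count is $1$ if $r\not\equiv 0\bmod 4$ and $2$ if $r\equiv 0\bmod 4$, that is, exactly $r'/r''$ in all cases. Hence the sum is $r'^{2}/r''$, every remaining factor cancels, and $(\lambda\otimes\lambda)(M) = q^{-r'(r'-1)} = (q^{r'})^{r'-1}$, where the last equality uses $q^{2r'}=1$. Since $q^{r'}=-1$ for $r$ even and $q^{r'}=1$ for $r$ odd, this is $(-1)^{r'-1}$ in every case. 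The only step requiring genuine care is the evaluation of the Gauss sum, where $r\bmod 4$ has to be distinguished; but this bookkeeping is already carried out in Lemma~\ref{L:monodromy}, so modulo that lemma the present statement follows by direct substitution and simplification.
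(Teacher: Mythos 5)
Your proposal is correct and follows essentially the same route as the paper: both evaluate $(\lambda\otimes\lambda)$ on the explicit monodromy matrix of Lemma~\ref{L:monodromy}, reduce to the top PBW monomial $E^{r'-1}F^{r'-1}K^{r'-1}$ in each factor, and simplify the resulting scalar to $q^{-r'(r'-1)}=(-1)^{r'-1}$. The only cosmetic difference is that the paper substitutes separately into \eqref{E:M_1} and \eqref{E:M_2}, whereas you keep the Gauss sum unevaluated and absorb the case distinction on $r\bmod 4$ into the count $r'/r''$; your intermediate formula and its evaluation check out.
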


\begin{proof}
 First of all, we have
 \begin{align*}
  \lambda(S(K^cF^bE^a)) 
  &= (-1)^{a+b}q^{(a-b)(1-a+b)}\lambda(E^aF^bK^{-a+b-c}) \\*
  &= \frac{\sqrt{r''} [r'-1]!}{\{ 1 \}^{r'-1}} \delta_{a,r'-1} \delta_{b,r'-1} \delta_{c,1}. 
 \end{align*}
 If $r \equiv 1 \mod 2$ or $r \equiv 2 \mod 4$, Equation~\eqref{E:M_1} gives
 \begin{align*}
  \lambda(S(R''_j R'_i)) \lambda(R'_j R''_i) &=
  \left( \frac{\sqrt{r'} [r'-1]!}{\{ 1 \}^{r'-1}} \right)^2 \frac{1}{r'}
  \left( \frac{\{ 1 \}^{r'-1}}{[r'-1]!} \right)^2 
  q^{(r'-1)(r'-2) - 2(r'-1)^2} \\*
  &= q^{-r'(r'-1)} 
  = 1.
 \end{align*}
 If $r \equiv 0 \mod 4$, Equation~\eqref{E:M_2} gives
 \begin{align*}
  \lambda(S(R''_j R'_i)) \lambda(R'_j R''_i) &=
  \left( \frac{\sqrt{r''} [r'-1]!}{\{ 1 \}^{r'-1}} \right)^2 \frac{1}{r''}
  \left( \frac{\{ 1 \}^{r'-1}}{[r'-1]!} \right)^2 
  q^{(r'-1)(r'-2) - 2(r'-1)^2} \\*
  &= q^{-r'(r'-1)} 
  = -1. \qedhere
 \end{align*}
\end{proof}

Let us now derive some consequences of our computations. In order to do this, we consider the trivial bundle $S^2 \times S^2$, and we denote by $S^2 \ttimes S^2$ the twisted one. Both of these closed $4$-manifolds admit a handle decomposition featuring a single $0$-handle, two $2$-handles, and a single $4$-handle. Notice that $S^2 \ttimes S^2$ can be obtained from the connected sum of $\bbC P^2$ and $\overline{\bbC P^2}$ by a single $2$-handle slide. Removing $4$-handles from $S^2 \times S^2$ and $S^2 \ttimes S^2$, we obtain two different $4$-dimensional $2$-handlebodies that can be distinguished by our invariant.

\begin{corollary}\label{C:CP2_S2_times_S2}
 The Kerler--Lyubashenko functor $\KLF : \RHB \to \mods{u_q \fsl_2}$ satisfies
 \begin{align*}
  \KLF((S^2 \times S^2) \smallsetminus \mathring{D}^4) &= (-1)^{r'-1}, \\*
  \KLF((S^2 \ttimes S^2) \smallsetminus \mathring{D}^4) &= 
  \begin{cases}
   1 & \mbox{ if } r \not\equiv 0 \mod 4, \\
   -1 & \mbox{ if } r \equiv 4 \mod 8, \\
   0 & \mbox{ if } r \equiv 0 \mod 8,
  \end{cases}
 \end{align*}
 where $r' = \frac{r}{\gcd(r,2)}$.
\end{corollary}

\begin{proof}
 The claim follows from Lemmas~\ref{L:stabilization} and \ref{L:S^2_times_S^2}, because $(S^2 \times S^2) \smallsetminus \mathring{D}^4$ is represented by an undotted Hopf link with framing $0$ on both components, while $(S^2 \ttimes S^2) \smallsetminus \mathring{D}^4$ is represented, after a \eqref{E:rel_D1} move, by an undotted unlink with two components of framing $+1$ and $-1$ respectively.
\end{proof}

\begin{remark}\label{R:boundary&signature}
 In particular, Corollary~\ref{C:CP2_S2_times_S2} implies that $\KLF : \RHB \to \mods{u_q \fsl_2}$ cannot factor through $\partial : \RHB \to \FRCob$ when $r \equiv 0 \mod 8$. Indeed, it cannot even depend exclusively on boundary, signature, and Euler characteristic of $4$-di\-men\-sion\-al $2$-handlebodies, because
 \begin{gather*}
  \partial_+((S^2 \times S^2) \smallsetminus \mathring{D}^4) \cong
  \partial_+((S^2 \ttimes S^2) \smallsetminus \mathring{D}^4) \cong D^2 \times I, \\*
  \sigma((S^2 \times S^2) \smallsetminus \mathring{D}^4) =
  \sigma((S^2 \ttimes S^2) \smallsetminus \mathring{D}^4) = 0, \\*
  \chi((S^2 \times S^2) \smallsetminus \mathring{D}^4) =
  \chi((S^2 \ttimes S^2) \smallsetminus \mathring{D}^4) = 3.
 \end{gather*}
\end{remark}

\appendix

\section{Modularity}\label{A:modularity}

In this appendix, we prove Proposition~\ref{P:modularity}, which states that a unimodular ribbon category $\calC$ is modular if and only if the copairing $w_+ : \one \to \calE \otimes \calE$ of the end $\calE$ is non-degenerate. In order to prove the claim, it is convenient to recall the definition of the coend 
\[
 \calL := \int^{X \in \calC} X^* \otimes X \in \calC,
\]
which is the universal dinatural transformation with source
\begin{align*}
 (\_^* \otimes \_) : \calC^\op \times \calC & \to \calC \\*
 (X,Y) & \mapsto X^* \otimes Y.
\end{align*}
In particular, such a coend is given by an object $\calL \in \calC$ together with a dinatural family of structure morphisms $i_X : X^* \otimes X \to \calL$ for every $X \in \calC$.

\begin{proof}[Proof of Proposition~\ref{P:modularity}]
  The strategy is to resort to \cite[Proposition~4.11]{FGR17}, which states that $\calC$ is modular if and only if the Drinfeld map $D_{\calL,\calE} : \calL \to \calE$ of \cite[Equation~(4.16)]{FGR17} is invertible. Just like in \cite[Lemma~4.12]{FGR17}, it is easy to show that the object $\calE^* \in \calC$ equipped with the dinatural family of structure morphisms
 \[
  k_X := \pic{coend} 
 \]
 is a coend
 \[
  \calE^* \cong \int^{X \in \calC} X^* \otimes X.
 \]
 The universal property of the coend $\calL$ yields a unique isomorphism $\varphi : \calL \to \calE^*$ satisfying
 \[
  k_X = \varphi \circ i_X
 \]
 for all $X \in \calC$. Then we have
 \begin{align*}
  j_Y \circ D_{\calL,\calE} \circ i_X &= \pic{Drinfeld_4} = \hspace*{-10pt} \pic{Drinfeld_5} \\*
  &= j_Y \circ D_{\calE^*,\calE} \circ k_X \\*
  &= j_Y \circ D_{\calE^*,\calE} \circ \varphi \circ i_X
 \end{align*}
 for all $X,Y \in \calC$, which implies
 \[
  D_{\calL,\calE} = D_{\calE^*,\calE} \circ \varphi
 \]
 because of the universal property of $\calE$ and $\calL$. This proves that $D_{\calL,\calE}$ is invertible if and only if $D_{\calE^*,\calE}$ is. 
\end{proof}

\section{Ribbon element and monodromy matrix}\label{A:ribbon_monodromy}

In this appendix we give explicit formulas for the ribbon element $v_+ \in u_q \fsl_2$ and for the monodromy matrix $M \in u_q \fsl_2 \otimes u_q \fsl_2$, as well as their inverses. Some of these computations involve generalized quadratic Gauss sums
\begin{equation}
 G(a,b;c) := \sum_{n=0}^{c-1} e^{2 \pi i \frac{a n^2 + b n}{c}} \label{E:gqGs}
\end{equation}
for $a,b,c \in \N$. In the special case $b=0$, we use the short notation
\begin{equation}
 G(a;c) := G(a,0;c). \label{E:sqGs}
\end{equation}
For convenience, let us recall some well-known facts about generalized quadratic Gauss sums.

\begin{lemma}
 Generalized quadratic Gauss sums satisfy:
 \begin{enumerate}
  \item \textit{Square completion}: If $\gcd(a,c)=1$ and there exists $f(a) \in \N$ satisfying $f(a)a \equiv 1 \mod c$ and $f(a)b \equiv 0 \mod 2$, then
   \begin{equation}
    G(a,b;c) = G(a;c) e^{-2 \pi i \frac{a \frac{f(a)^2 b^2}{4}}{c}}. \label{E:complete_the_square}
   \end{equation}
  \item \textit{Multiplicativity}: If $\gcd(c,d)=1$, then
   \begin{equation}
    G(a,b;cd) = G(ac,b;d)G(ad,b;c). \label{E:multiplicativity}
   \end{equation}
  \item \textit{Triviality}: If $\gcd(a,c)=1$, $b \equiv 1 \mod 2$, and $c \equiv 0 \mod 4$, then
   \begin{equation}
    G(a,b;c) = 0. \label{E:triviality}
   \end{equation}
 \end{enumerate}
\end{lemma}

\begin{proof}
 For what concerns point $(i)$, as $n$ runs through all residue classes modulo $c$, so does $m = n + \frac{f(a)b}{2}$. Therefore, Equation~\eqref{E:complete_the_square} can be established by a change of variable as follows:
 \begin{align*}
  G(a,b;c) 
  &= \left( \sum_{n=0}^{c-1} e^{2 \pi i \frac{a \left( n + \frac{f(a)b}{2} \right)^2}{c}} \right) e^{-2 \pi i \frac{a \frac{f(a)^2 b^2}{4}}{c}} \\*
  &= \left( \sum_{m=0}^{c-1} e^{2 \pi i \frac{am^2}{c}} \right) e^{-2 \pi i \frac{a \frac{f(a)^2 b^2}{4}}{c}} \\*
  &= G(a;c) e^{-2 \pi i \frac{a \frac{f(a)^2 b^2}{4}}{c}}.
 \end{align*}
 Similarly, for what concerns point $(ii)$, as $k$ and $\ell$ run through all residue classes modulo $c$ and $d$ respectively, $n = ck+d\ell$ runs thorugh all residue classes modulo $cd$. Therefore, Equation~\eqref{E:multiplicativity} can be established by a change of variable as follows:
 \begin{align*}
  G(ac,b;d) G(ad,b;c) 
  &= \left( \sum_{k=0}^{d-1} e^{2 \pi i \frac{ac k^2 + b k}{d}} \right) \left( \sum_{\ell=0}^{c-1} e^{2 \pi i \frac{ad \ell^2 + b \ell}{c}} \right) \\*
  &= \sum_{k=0}^{d-1} \sum_{\ell=0}^{c-1} e^{2 \pi i \left( \frac{ac k^2 + b k}{d} + \frac{ad \ell^2 + b \ell}{c} \right)} \\*
  &= \sum_{k=0}^{d-1} \sum_{\ell=0}^{c-1} e^{2 \pi i \left( \frac{ac^2 k^2 + bc k + ad^2 \ell^2 + bd \ell}{cd} \right)} \\*
  &= \sum_{k=0}^{d-1} \sum_{\ell=0}^{c-1} e^{2 \pi i \left( \frac{a(ck+d\ell)^2 + b(ck+d\ell)}{cd} \right)} \\*
  &= \sum_{n=0}^{cd-1} e^{2 \pi i \left( \frac{an^2 + bn}{cd} \right)} \\*
  &= G(a,b;cd).
 \end{align*}
 Finally, for what concerns point $(iii)$, using point $(ii)$, it is sufficient to prove the claim for $c = 2^k$ with $k \geqs 2$. First of all, notice that $an^2 + bn$ is even for every $n \in \N$. Then, as $n$ runs through all residue classes modulo $2^k$, we claim that $an^2+bn$ runs through all even residue classes modulo $2^k$ extactly twice. Indeed, for every integer $0 \leqs m \leqs 2^{k-1}-1$, we can consider the polynomial
 \[
  P_m(X) = aX^2 + bX - 2m \in (\Z/2^k \Z)[X],
 \]
 which projects to 
 \[
  \overline{P_m}(X) = X^2 + X \in (\Z/2 \Z)[X].
 \]
 Then, by Hensel's Lemma, the unique factorization 
 \[
  \overline{P_m}(X) = X(X+1) \in (\Z/2 \Z)[X]
 \]
 lifts to a unique factorization 
 \[
  P_m(X) = a(X-n_1(m))(X-n_2(m)) \in (\Z/2^k \Z)[X].
 \] 
 In particular, $P_m(X)$ has exactly two roots $n_1(m)$ and $n_2(m)$, and these are all distinct as $m$ runs through all residue classes modulo $2^{k-1}$. This means that $\{ n_1(0), n_2(0), \ldots, n_1(2^{k-1}-1), n_2(2^{k-1}-1) \} = \Z/ 2^k \Z$. Therefore, Equation~\eqref{E:triviality} can be established by a change of variable as follows:
 \begin{align*}
  G(a,b;2^k) 
  &= \sum_{m=0}^{2^{k-1}-1} e^{2 \pi i \frac{an_1(m)^2 + bn_1(m)}{2^k}} + e^{2 \pi i \frac{an_2(m)^2 + bn_2(m)}{2^k}} \\*
  &= 2 \sum_{m=0}^{2^{k-1}-1} e^{2 \pi i \frac{2m}{2^k}} \\*
  &= 2 \sum_{m=0}^{2^{k-1}-1} e^{2 \pi i \frac{m}{2^{k-1}}} \\*
  &= 0. \qedhere
 \end{align*}
\end{proof}

We are now ready to give formulas for the ribbon element $v_+ \in u_q \fsl_2$.

\begin{lemma}\label{L:ribbon}
 The ribbon element $v_+ \in u_q \fsl_2$ and its inverse $v_- \in u_q \fsl_2$ are given for $r \equiv 1 \mod 2$ by
 \begin{align}
  v_+ 
  &= \frac{i^{\frac{r-1}{2}}}{\sqrt{r}} 
  \sum_{a,b=0}^{r-1} \frac{\{ -1 \}^a}{[a]!} 
  q^{-\frac{(a+3)a}{2}+\frac{r+1}{2}(b-1)^2} F^a E^a K^{-a-b}, \label{E:ribbon_1} \\*
  v_- 
  &= \frac{i^{-\frac{r-1}{2}}}{\sqrt{r}} 
  \sum_{a,b=0}^{r-1} \frac{\{ 1 \}^a}{[a]!} 
  q^{\frac{(a+3)a}{2}-\frac{r+1}{2}(b-1)^2} F^a E^a K^{a+b}, \label{E:inverse_ribbon_1}
 \end{align}
 for $r \equiv 2 \mod 4$ by\footnote{Here $\left( \frac{2}{r'} \right)$ denotes the Jacobi symbol of $2$ modulo $r'$.}
 \begin{align}
  v_+ 
  &= \left( \frac{2}{r'} \right) \frac{i^{\frac{r'-1}{2}}}{\sqrt{r'}} 
  \sum_{a,b=0}^{r'-1} \frac{\{ -1 \}^a}{[a]!} 
  q^{-\frac{(a+3)a}{2}+\frac{(r'+1)^2}{2}(b-1)^2} F^a E^a K^{-a-b}, \label{E:ribbon_2} \\*
  v_- 
  &= \left( \frac{2}{r'} \right) \frac{i^{-\frac{r'-1}{2}}}{\sqrt{r'}} 
  \sum_{a,b=0}^{r'-1} \frac{\{ 1 \}^a}{[a]!} 
  q^{\frac{(a+3)a}{2}-\frac{(r'+1)^2}{2}(b-1)^2} F^a E^a K^{a+b}, \label{E:inverse_ribbon_2}
 \end{align}
 for $r \equiv 4 \mod 8$ by
 \begin{align}
  v_+ 
  &= \frac{i^{\frac{r''-1}{2}}}{\sqrt{r''}}
  \sum_{a=0}^{r'-1} \sum_{b=0}^{r''-1} \frac{\{ -1 \}^a}{[a]!}
  q^{-\frac{(a+3)a}{2} + \frac{(r''+1)^3}{2} (2b-1)^2} F^a E^a K^{-a-2b}, \label{E:ribbon_3} \\*
  v_- 
  &= \frac{i^{-\frac{r''-1}{2}}}{\sqrt{r''}} \sum_{a=0}^{r'-1} \sum_{b=0}^{r''-1} \frac{\{ 1 \}^a}{[a]!}
  q^{\frac{(a+3)a}{2} - \frac{(r''+1)^3}{2} (2b-1)^2} F^a E^a K^{a+2b}, \label{E:inverse_ribbon_3}
 \end{align}
 for $r \equiv 0 \mod 8$ by
 \begin{align}
  v_+ 
  &= \frac{1-i}{\sqrt{r'}} 
  \sum_{a=0}^{r'-1} \sum_{b=0}^{r''-1} \frac{\{ -1 \}^a}{[a]!} q^{-\frac{(a+3)a}{2} + 2b^2} F^a E^a K^{-a-2b-1}, \label{E:ribbon_4} \\*
  v_- &= 
  \frac{1+i}{\sqrt{r'}} 
  \sum_{a=0}^{r'-1} \sum_{b=0}^{r''-1} \frac{\{ 1 \}^a}{[a]!} q^{\frac{(a+3)a}{2} - 2b^2} F^a E^a K^{a+2b+1}. \label{E:inverse_ribbon_4}
 \end{align}
\end{lemma}

\begin{proof}
 All these formulas rely on explicit evaluations of generalized quadratic Gauss sums. Following \cite[Chapter~1]{BEW98}, these can be expressed in terms of the Legendre symbol
 \[
  \left( \frac{a}{p} \right) = 
  \begin{cases}
   0 & \mbox{ if } a \equiv 0 \mod p, \\
   1 & \mbox{ if } a \equiv b^2 \mod p \mbox{ for some } b \not\equiv 0 \mod p, \\
   -1 & \mbox{ otherwise, }
  \end{cases}
 \]
 for $a \in \N$ arbitrary and $p \in \N$ prime, and of the Jacobi symbol
 \[
  \left( \frac{a}{b} \right) = \left( \frac{a}{p_1} \right) \cdots \left( \frac{a}{p_k} \right)
 \]
 for $b = p_1 \cdots p_k \in \N$ arbitrary and $p_1,\ldots,p_k \in \N$ prime. We also introduce for convenience the notation
 \[
  \varepsilon_a := 
  \begin{cases}
   1 & \mbox{ if } a \equiv 1 \mod 4, \\
   i & \mbox{ if } a \equiv 3 \mod 4,
  \end{cases}
 \]
 for every odd $a \in \N$.

 Formulas for $v_+$ can be obtained by computing the Drinfeld element
 \begin{align*}
  u &= \frac{1}{r'} \sum_{a,b,c=0}^{r'-1} \frac{\{ 1 \}^a}{[a]!}
  q^{\frac{a(a-1)}{2} - 2bc} S(K^c F^a) K^b E^a  \\*
  &= \frac{1}{r'} \sum_{a,b,c=0}^{r'-1} (-1)^a \frac{\{ 1 \}^a}{[a]!}
  q^{-\frac{(a+3)a}{2} - 2bc} F^a K^{a+b-c} E^a \\*
  &= \frac{1}{r'} \sum_{a,b,c=0}^{r'-1} (-1)^a \frac{\{ 1 \}^a}{[a]!}
  q^{-\frac{(a+3)a}{2} - 2bc + 2a(a+b-c)} F^a E^a K^{a+b-c},
 \end{align*}
 where the first equality follows from
 \[
  S(K^c F^a) = (-1)^a q^{-(a+1)a} F^aK^{a-c}.
 \]
 If we set $d = -2a-b+c$ and $n = a+b$ we obtain
 \begin{align}
  u 
  &= \frac{1}{r'} \sum_{a,d,k=0}^{r'-1} \frac{\{ -1 \}^a}{[a]!}
  q^{-\frac{(a+3)a}{2} - 2(n^2+dn)} F^a E^a K^{-a-d} \nonumber \\*
  &= \frac{1}{r'} \sum_{a,d=0}^{r'-1} \left( \sum_{n=0}^{r'-1} e^{-2 \pi i \frac{2n^2+2dn}{r}} \right) \frac{\{ -1 \}^a}{[a]!}
  q^{-\frac{(a+3)a}{2}} F^a E^a K^{-a-d}. \label{E:u_general}
 \end{align}

 If $r \equiv 1 \mod 2$, then $r = r'$, and Equation~\eqref{E:u_general} becomes
 \begin{align}
  u &= \frac{1}{r} \sum_{a,d=0}^{r-1} \overline{G(2,2d;r)} \frac{\{ -1 \}^a}{[a]!}
  q^{-\frac{(a+3)a}{2}} F^a E^a K^{-a-d}. \label{E:u_odd}
 \end{align}
 Completing the square using Equation~\eqref{E:complete_the_square} with $f(2)=\frac{r+1}{2}$ gives
 \[
  G(2,2d;r) = G(2;r) e^{-2 \pi i \frac{\frac{(r+1)^2}{2}d^2}{r}} .
 \]
 Now, \cite[Theorem~1.5.2]{BEW98} yields
 \begin{equation*}
  G(2;r) = \left( \frac{2}{r} \right) \varepsilon_r \sqrt{r}.
 \end{equation*}
 Notice that, since 
 \[
  \left( \frac{2}{r} \right) = 
  \begin{cases}
   1 & \mbox{ if } r \equiv 1,7 \mod 8, \\
   -1 & \mbox{ if } r \equiv 3,5 \mod 8,
  \end{cases}
 \]
 we have
 \begin{equation}\label{E:Jacobi_epsilon}
  \left( \frac{2}{r} \right) \varepsilon_r = i^{-\frac{r-1}{2}}.
 \end{equation}
 This implies
 \begin{equation}\label{E:sG_odd}
  G(2;r) = i^{-\frac{r-1}{2}} \sqrt{r}.
 \end{equation}
 Furthermore, we have
 \begin{equation*}
  e^{-2 \pi i \frac{\frac{(r+1)^2}{2}d^2}{r}} = q^{-\frac{(r+1)^2}{2}d^2} = q^{-\frac{r+1}{2}d^2}.
 \end{equation*}
 Therefore, we obtain
 \begin{equation}\label{E:gG_odd}
  G(2,2d;r) = i^{-\frac{r-1}{2}} \sqrt{r} q^{-\frac{r+1}{2}d^2}.
 \end{equation}
 Then, Equations~\eqref{E:u_odd} and \eqref{E:gG_odd} yield
 \begin{align*}
  u &= \frac{i^{\frac{r-1}{2}}}{\sqrt{r}} \sum_{a,d=0}^{r-1} 
  \frac{\{ -1 \}^a}{[a]!} q^{-\frac{(a+3)a}{2}+\frac{r+1}{2}d^2} F^a E^a K^{-a-d}.
 \end{align*}
 Equation~\eqref{E:ribbon_1} now follows from $v_+ = uK^{-1}$.

 If $r \equiv 0 \mod 2$, then $r = 2r'$, and Equation~\eqref{E:u_general} becomes
 \begin{align}
  u &= \frac{1}{r'} \sum_{a,d=0}^{r'-1} \overline{G(1,d;r')} \frac{\{ -1 \}^a}{[a]!}
  q^{-\frac{(a+3)a}{2}} F^a E^a K^{-a-d}. \label{E:u_even}
 \end{align}

 If $r \equiv 2 \mod 4$, then $r' \equiv 1 \mod 2$. Completing the square using Equation~\eqref{E:complete_the_square} with $f(1)=r'+1$ gives
 \[
  G(1,d;r') = G(1;r') e^{-2 \pi i \frac{\frac{(r'+1)^2}{4} d^2}{r'}}.
 \]
 Now, \cite[Corollary~1.2.3]{BEW98} yields
 \begin{equation*}
  G(1;r') = \varepsilon_{r'} \sqrt{r'}.
 \end{equation*}
 Notice that Equation~\eqref{E:Jacobi_epsilon} implies
 \[
  \varepsilon_{r'} = \left( \frac{2}{r'} \right) i^{-\frac{r'-1}{2}},
 \]
 Furthermore, we have
 \[
  e^{-2 \pi i \frac{\frac{(r'+1)^2}{4}d^2}{r'}} = q^{-\frac{(r'+1)^2}{2}d^2}.
 \]
 Therefore, we obtain
 \begin{equation}\label{E:gG_2_mod_4}
  G(1,d;r') = \left( \frac{2}{r'} \right) i^{-\frac{r'-1}{2}} \sqrt{r'} q^{-\frac{(r'+1)^2}{2}d^2}.
 \end{equation}
 Then, Equations~\eqref{E:u_even} and \eqref{E:gG_2_mod_4} yield
 \begin{align*}
  u &= \left( \frac{2}{r'} \right) \frac{i^{\frac{r'-1}{2}}}{\sqrt{r'}} \sum_{a,d=0}^{r'-1} 
  \frac{\{ -1 \}^a}{[a]!} q^{-\frac{(a+3)a}{2}+\frac{(r'+1)^2}{2}d^2} F^a E^a K^{-a-d}.
 \end{align*}
 Equation~\eqref{E:ribbon_2} now follows from $v_+ = uK^{-1}$.

 If $r \equiv 4 \mod 8$, then $r' = 2r''$ with $r'' \equiv 1 \mod 2$. The multiplicativity property of Equation~\eqref{E:multiplicativity} gives
 \begin{align*}
  G(1,d;r') 
  &= G(2,d;r'') G(r'',d;2) = G(2,d;r'') G(1,d;2) \\*
  &= G(2,d;r'') \left( 1+(-1)^{d+1} \right),
 \end{align*}
 and completing the square using Equation~\eqref{E:complete_the_square} with $f(2)=\frac{(r''+1)^2}{2}$ gives
 \[
  G(2,d;r'') = G(2;r'') e^{-2 \pi i \frac{\frac{(r''+1)^4}{8}d^2}{r''}}.
 \]
 Then, Equation~\eqref{E:sG_odd} yields
 \[
  G(2;r'') = i^{-\frac{r''-1}{2}} \sqrt{r''}.
 \]
 Furthermore, we have
 \[
  e^{-2 \pi i \frac{\frac{(r''+1)^4}{8}d^2}{r''}} = q^{-\frac{(r''+1)^4}{2}d^2} = q^{-\frac{(r''+1)^3}{2}d^2}.
 \]
 Therefore, we obtain
 \begin{equation}\label{E:gG_4_mod_8}
  G(1,d;r') = 
  \begin{cases}
   0 & \mbox{ if } d \equiv 0 \mod 2, \\
   2 i^{-\frac{r''-1}{2}} \sqrt{r''} q^{-\frac{(r''+1)^3}{2}d^2} & \mbox{ if } d \equiv 1 \mod 2.
  \end{cases} 
 \end{equation}
 Then, Equations~\eqref{E:u_even} and \eqref{E:gG_4_mod_8} yield
 \begin{align*}
  u &= \frac{i^{\frac{r''-1}{2}}}{\sqrt{r''}} \sum_{a=0}^{r'-1} \sum_{b=0}^{r''-1} \frac{\{ -1 \}^a}{[a]!}
  q^{-\frac{(a+3)a}{2} + \frac{(r''+1)^3}{2} (2b-1)^2} F^a E^a K^{-a-2b+1}.
 \end{align*}
 Equation~\eqref{E:ribbon_3} now follows from $v_+ = uK^{-1}$.

 If $r \equiv 0 \mod 8$, then $r' \equiv 0 \mod 4$, and we rewrite Equation~\eqref{E:u_even} as
 \begin{align}
  u &= \frac{1}{r'} \sum_{a=0}^{r'-1} \sum_{b=0}^{r''-1} \overline{G(1,2b;r')} \frac{\{ -1 \}^a}{[a]!}
  q^{-\frac{(a+3)a}{2}} F^a E^a K^{-a-2b} \nonumber \\*
  &\hspace*{\parindent} \frac{1}{r'} \sum_{a=0}^{r'-1} \sum_{b=0}^{r''-1} \overline{G(1,2b+1;r')} \frac{\{ -1 \}^a}{[a]!}
  q^{-\frac{(a+3)a}{2}} F^a E^a K^{-a-2b-1}. \label{E:u_0_mod_8}
 \end{align}
 On the one hand, completing the square using Equation~\eqref{E:complete_the_square} with $f(1)=1$ gives
 \[
  G(1,2b;r') = G(1;r') e^{-2 \pi i \frac{b^2}{r'}}.
 \]
 Now, \cite[Corollary~1.2.3]{BEW98} yields
 \begin{align*}
  G(1;r') &= (1+i) \sqrt{r'}.
 \end{align*}
 Furthermore, we have
 \[
  e^{-2 \pi i \frac{b^2}{r'}} = q^{-2b^2}.
 \]
 Therefore, we obtain
 \begin{equation}\label{E:gG_0_mod_8_even}
  G(1,2b;r') = (1+i) \sqrt{r'} q^{-2b^2}.
 \end{equation}
 On the other hand, the triviality property of Equation~\eqref{E:triviality} gives
 \begin{equation}\label{E:gG_0_mod_8_odd}
  G(1,2b+1;r') = 0.
 \end{equation}
 Then, Equations~\eqref{E:u_0_mod_8}, \eqref{E:gG_0_mod_8_even}, and \eqref{E:gG_0_mod_8_odd} yield
 \begin{align*}
  u &= 
  \frac{1-i}{\sqrt{r'}} 
  \sum_{a=0}^{r'-1} \sum_{b=0}^{r''-1} \frac{\{ -1 \}^a}{[a]!} 
  q^{-\frac{(a+3)a}{2} + 2b^2} F^a E^a K^{-a-2b}.
 \end{align*}
 Equation~\eqref{E:ribbon_4} now follows from $v_+ = uK^{-1}$.

 Similarly, formulas for $v_-$ can be obtained by computing the inverse Drinfeld element
 \begin{align*}
  u^{-1} &= \frac{1}{r'} \sum_{a,b,c=0}^{r'-1} \frac{\{ 1 \}^a}{[a]!}
  q^{\frac{a(a-1)}{2} - 2bc} K^c F^a S^2(K^b E^a) \\*
  &= \frac{1}{r'} \sum_{a,b,c=0}^{r'-1} \frac{\{ 1 \}^a}{[a]!}
  q^{\frac{(a+3)a}{2} - 2bc} K^c F^a K^b E^a \\*
  &= \frac{1}{r'} \sum_{a,b,c=0}^{r'-1} \frac{\{ 1 \}^a}{[a]!}
  q^{\frac{(a+3)a}{2} - 2bc + 2ab} F^a E^a K^{b+c}.
 \end{align*}
 where the first equality follows from
 \[
  S^2(K^b E^a) = q^{2a} K^b E^a.
 \]
 If we set $d = a-b-c$ and $n = b$ we obtain
 \begin{align}
  u^{-1} 
  &= \frac{1}{r'} \sum_{a,d,n=0}^{r'-1} \frac{\{ 1 \}^a}{[a]!}
  q^{\frac{(a+3)a}{2}+2(n^2+dn)} F^a E^a K^{a-d} \nonumber \\*
  &= \frac{1}{r'} \sum_{a,d=0}^{r'-1} \left( \sum_{n=0}^{r'-1} e^{2 \pi i \frac{2n^2+2dn}{r}} \right) \frac{\{ 1 \}^a}{[a]!}
  q^{\frac{(a+3)a}{2}} F^a E^a K^{a-d}. \label{E:u-1_general}
 \end{align}

 If $r \equiv 1 \mod 2$, then $r = r'$, and Equation~\eqref{E:u-1_general} becomes
 \begin{align}
  u^{-1} &= \frac{1}{r} \sum_{a,d=0}^{r-1} G(2,2d;r) \frac{\{ 1 \}^a}{[a]!}
  q^{\frac{(a+3)a}{2}} F^a E^a K^{a-d}. \label{E:u-1_odd}
 \end{align}
 Then, Equations~\eqref{E:u-1_odd} and \eqref{E:gG_odd} yield
 \begin{align*}
  u^{-1} &= \frac{i^{-\frac{r-1}{2}}}{\sqrt{r}} \sum_{a,d=0}^{r-1} 
  \frac{\{ 1 \}^a}{[a]!} q^{\frac{(a+3)a}{2}-\frac{r+1}{2}d^2} F^a E^a K^{a-d} \\*
  &= \frac{i^{-\frac{r-1}{2}}}{\sqrt{r}} \sum_{a,c=0}^{r-1} 
  \frac{\{ 1 \}^a}{[a]!} q^{\frac{(a+3)a}{2}-\frac{r+1}{2}c^2} F^a E^a K^{a+c}.
 \end{align*}
 Equation~\eqref{E:inverse_ribbon_1} now follows from $v_- = u^{-1}K$.

 If $r \equiv 0 \mod 2$, then $r = 2r'$, and Equation~\eqref{E:u-1_general} becomes
 \begin{align}
  u^{-1} &= \frac{1}{r'} \sum_{a,d=0}^{r'-1} G(1,d;r') \frac{\{ 1 \}^a}{[a]!}
  q^{\frac{(a+3)a}{2}} F^a E^a K^{a-d}. \label{E:u-1_even}
 \end{align}

 If $r \equiv 2 \mod 4$, then $r' \equiv 1 \mod 2$. Then, Equations~\eqref{E:u-1_even} and \eqref{E:gG_2_mod_4} yield
 \begin{align*}
  u^{-1} &= \left( \frac{2}{r'} \right) \frac{i^{-\frac{r'-1}{2}}}{\sqrt{r'}} \sum_{a,d=0}^{r'-1} 
  \frac{\{ 1 \}^a}{[a]!} q^{\frac{(a+3)a}{2}-\frac{(r'+1)^2}{2}d^2} F^a E^a K^{a-d} \\*
  &= \left( \frac{2}{r'} \right) \frac{i^{-\frac{r'-1}{2}}}{\sqrt{r'}} \sum_{a,c=0}^{r'-1} 
  \frac{\{ 1 \}^a}{[a]!} q^{\frac{(a+3)a}{2}-\frac{(r'+1)^2}{2}c^2} F^a E^a K^{a+c}.
 \end{align*}
 Equation~\eqref{E:inverse_ribbon_2} now follows from $v_- = u^{-1}K$.

 If $r \equiv 4 \mod 8$, then $r' = 2r''$ with $r'' \equiv 1 \mod 2$. Then, Equations~\eqref{E:u-1_even} and \eqref{E:gG_4_mod_8} yield
 \begin{align*}
  u^{-1} &= \frac{i^{-\frac{r''-1}{2}}}{\sqrt{r''}} \sum_{a=0}^{r'-1} \sum_{c=0}^{r''-1} \frac{\{ 1 \}^a}{[a]!}
  q^{\frac{(a+3)a}{2} - \frac{(r''+1)^3}{2} (2c+1)^2} F^a E^a K^{a-2c-1} \\*
  &= \frac{i^{-\frac{r''-1}{2}}}{\sqrt{r''}} \sum_{a=0}^{r'-1} \sum_{b=0}^{r''-1} \frac{\{ 1 \}^a}{[a]!}
  q^{\frac{(a+3)a}{2} - \frac{(r''+1)^3}{2} (2b-1)^2} F^a E^a K^{a+2b-1}.
 \end{align*}
 Equation~\eqref{E:inverse_ribbon_3} now follows from $v_- = u^{-1}K$.

 If $r \equiv 0 \mod 8$, then $r' \equiv 0 \mod 4$, and we rewrite Equation~\eqref{E:u-1_even} as
 \begin{align}
  u^{-1} &= \frac{1}{r'} \sum_{a=0}^{r'-1} \sum_{b=0}^{r''-1} G(1,2b;r') \frac{\{ 1 \}^a}{[a]!}
  q^{\frac{(a+3)a}{2}} F^a E^a K^{a-2b} \nonumber \\*
  &\hspace*{\parindent} \frac{1}{r'} \sum_{a=0}^{r'-1} \sum_{b=0}^{r''-1} G(1,2b+1;r') \frac{\{ 1 \}^a}{[a]!}
  q^{\frac{(a+3)a}{2}} F^a E^a K^{a-2b-1}. \label{E:u-1_0_mod_8}
 \end{align}
 Then, Equations~\eqref{E:u-1_0_mod_8}, \eqref{E:gG_0_mod_8_even}, and \eqref{E:gG_0_mod_8_odd} yield
 \begin{align*}
  u^{-1} &= \frac{1+i}{\sqrt{r'}} 
  \sum_{a=0}^{r'-1} \sum_{b=0}^{r''-1} \frac{\{ 1 \}^a}{[a]!} 
  q^{\frac{(a+3)a}{2} - 2b^2} F^a E^a K^{a-2b} \\*
  &= \frac{1+i}{\sqrt{r'}} 
  \sum_{a=0}^{r'-1} \sum_{c=0}^{r''-1} \frac{\{ 1 \}^a}{[a]!} 
  q^{\frac{(a+3)a}{2} - 2c^2} F^a E^a K^{a+2c}.
 \end{align*}
 Equation~\eqref{E:inverse_ribbon_4} now follows from $v_- = u^{-1}K$.
\end{proof}

Formulas for the monodromy matrix $M \in u_q \fsl_2 \otimes u_q \fsl_2$ are easier to establish.

\begin{lemma}\label{L:monodromy}
 The M-matrix $M \in u_q \fsl_2 \otimes u_q \fsl_2$ and its inverse $M^{-1} \in u_q \fsl_2 \otimes u_q \fsl_2$ are given for $r \equiv 1 \mod 2$ and $r \equiv 2 \mod 4$ by
 \begin{align}
  M &= \frac{1}{r'} \sum_{a,b,c,d=0}^{r'-1} \frac{\{ 1 \}^{a+b}}{[a]![b]!} \nonumber \\*
  &\hspace*{\parindent} q^{\frac{a(a-1)+b(b-1)}{2} - 2b^2 - (r'+1)cd} K^{-b+c} F^b E^a \otimes K^{b+d} E^b F^a, \label{E:M_1} \\
  M^{-1} &=
  \frac{1}{r'} \sum_{a,b,c,d=0}^{r'-1} \frac{\{ -1 \}^{a+b}}{[a]![b]!}  \nonumber \\*
  &\hspace*{\parindent} q^{-\frac{a(a-1)+b(b-1)}{2} + 2b^2 + (r'+1)cd} E^a F^b K^{-b+c} \otimes F^a E^b K^{b+d}, \label{E:M_inverse_1}
 \end{align}
 for $r \equiv 0 \mod 4$ by
 \begin{align}
  M &=
  \frac{1}{r''} \sum_{a,b=0}^{r'-1} \sum_{c,d=0}^{r''-1} \frac{\{ 1 \}^{a+b}}{[a]![b]!} \nonumber \\*
  &\hspace*{\parindent} q^{\frac{a(a-1)+b(b-1)}{2} - 2b^2 - 4cd} K^{-b+2c} F^b E^a \otimes K^{b+2d} E^b F^a, \label{E:M_2} \\
  M^{-1} &=
  \frac{1}{r''} \sum_{a,b=0}^{r'-1} \sum_{c,d=0}^{r''-1} \frac{\{ -1 \}^{a+b}}{[a]![b]!} \nonumber \\*
  &\hspace*{\parindent} q^{\frac{a(a-1)+b(b-1)}{2} + 2b^2 + 4cd} E^a F^b K^{-b+2c} \otimes F^a E^b K^{b+2d}. \label{E:M_inverse_2}
 \end{align}
\end{lemma}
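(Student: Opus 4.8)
The plan is to compute $M$ directly from its definition $M=R''_jR'_i\otimes R'_jR''_i=R_{21}R$, feeding in the explicit formula for the R-matrix $R\in u_q\fsl_2\otimes u_q\fsl_2$ recalled in Section~\ref{S:small_quantum_sl2}. Writing $R$ with a triple summation index $(a,b,c)$ and $R_{21}$ with an independent triple $(a',b',c')$, the product $R_{21}R$ becomes a six-fold sum whose summand is $(K^{c'}F^{a'})(K^{b}E^{a})$ in the first tensor leg and $(K^{b'}E^{a'})(K^{c}F^{a})$ in the second, times $\tfrac{1}{(r')^2}\tfrac{\{1\}^{a+a'}}{[a]![a']!}$ and an accumulated power of $q$. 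The only manipulation here is to normal-order the $K$-monomials using $KE=q^{2}EK$ and $KF=q^{-2}FK$, which produces an extra phase: the first leg becomes $q^{2a'b}K^{b+c'}F^{a'}E^{a}$ and the second becomes $q^{-2a'c}K^{b'+c}E^{a'}F^{a}$. Collecting all phase contributions, one is left with a six-fold sum over $(a,a',b,b',c,c')$ of a power of $q$ that is quadratic in $(a,a')$ and bilinear in the remaining four indices, multiplied by $K^{b+c'}F^{a'}E^{a}\otimes K^{b'+c}E^{a'}F^{a}$.

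The second step is to collapse this to the four-fold sum in the statement. One takes the two $K$-exponents $b+c'$ and $b'+c$ as new summation variables (eliminating, say, $b$ and $b'$); after this the two remaining "old" indices appear \emph{only} in the phase, and — crucially — only linearly, since in the original expression each of $b,b',c,c'$ already occurs linearly in every factor. Summing one of them out via the elementary orthogonality identity $\sum_{x}q^{2mx}=r'$ when $q^{2m}=1$ and $0$ otherwise produces a Kronecker delta that eliminates one further index, turning the prefactor $\tfrac1{(r')^2}$ into $\tfrac1{r'}$. Renaming the surviving indices $(a,b,c,d)$ then yields exactly \eqref{E:M_1} and \eqref{E:M_2}. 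In particular, unlike in the proof of Lemma~\ref{L:ribbon}, no quadratic Gauss sums are needed: the index summed away enters linearly, so no Legendre symbols or factors $\sqrt{r}$ appear. The case split is forced purely by the arithmetic of $q$ as a root of unity relative to the order $r'$ of $K$: when $r$ is odd, $r'=r$ and the orthogonality sum has full length $r$, giving the normalization $\tfrac1r$; when $r\equiv 2\bmod 4$ one has $r'=r''$ and, after accounting for the periodicity of the summand, the same shape \eqref{E:M_1}; whereas when $r\equiv 0\bmod 4$ the order of $q^{2}$ and the parity of the exponents in the delta constraint force the summand to be periodic of period $r''=r'/2$ in two of the four indices, which is exactly what permits the rewriting with prefactor $\tfrac1{r''}$ and shortened ranges in \eqref{E:M_2}.

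Finally, formulas \eqref{E:M_inverse_1} and \eqref{E:M_inverse_2} for $M^{-1}$ follow by the same computation applied to $M^{-1}=(R_{21}R)^{-1}=R^{-1}(R^{-1})_{21}$, substituting the explicit expression for $R^{-1}$ from Section~\ref{S:small_quantum_sl2} and repeating the normal-ordering and orthogonality reduction verbatim, with the signs of the $q$-exponents reversed, the roles of $E$ and $F$ swapped, and the $K$-monomials ending up to the right of the $E$'s and $F$'s. The main obstacle is not conceptual but bookkeeping: one must carry the quadratic phases correctly through the $K$-normal-ordering and the successive linear changes of variables, and — as in the proof of Lemma~\ref{L:ribbon} — verify that the ranges of the surviving summation indices are compatible with the relations $K^{r'}=1$, $E^{r'}=F^{r'}=0$, and with the periodicity and parity of the phase, since it is precisely this arithmetic that produces the two cases in the statement.
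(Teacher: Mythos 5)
Your proposal is correct and follows essentially the same route as the paper: expand $M=R_{21}R$ (resp.\ $M^{-1}=R^{-1}(R^{-1})_{21}$) as a six-fold sum using the explicit R-matrix, normal-order the $K$'s picking up the phases $q^{\pm 2a'b}$, pass to the $K$-exponents as new variables, collapse one index by root-of-unity orthogonality (no Gauss sums), and resolve the remaining arithmetic of $q$ versus the order $r'$ of $K$ by cases — in the paper the $r\equiv 0\bmod 4$ case appears as a factor $(1+(-1)^{b+g})$ forcing $g\equiv b\bmod 2$, which is the periodicity constraint you describe.
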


\begin{proof}
 For what concerns $M$, we have
 \begin{align*}
  M 
  &= \frac{1}{r'^2} \sum_{a,b,c,d,e,f=0}^{r'-1} \frac{\{ 1 \}^{a+b}}{[a]![b]!} \\*
  &\hspace*{\parindent} q^{\frac{a(a-1)+b(b-1)}{2} - 2cd - 2ef} K^f F^b K^c E^a \otimes K^e E^b K^d F^a \\
  &= \frac{1}{r'^2} \sum_{a,b,c,d,e,f=0}^{r'-1} \frac{\{ 1 \}^{a+b}}{[a]![b]!} \\*
  &\hspace*{\parindent} q^{\frac{a(a-1)+b(b-1)}{2} - 2cd - 2ef + 2b(c-d)} K^{c+f} F^b E^a \otimes K^{d+e} E^b F^a \\
  &= \frac{1}{r'^2} \sum_{a,b,e,f,g,h=0}^{r'-1} \frac{\{ 1 \}^{a+b}}{[a]![b]!} \\*
  &\hspace*{\parindent} q^{\frac{a(a-1)+b(b-1)}{2} - 2(f-g)(e-h) - 2ef - 2b(f-g-e+h)} K^g F^b E^a \otimes K^h E^b F^a \\
  &= \frac{1}{r'^2} \sum_{a,b,e,g,h=0}^{r'-1} \left( \sum_{f=0}^{r'-1} e^{- \frac{2 \pi i}{r'} 2\frac{r'}{r}f(b+2e-h)} \right) \frac{\{ 1 \}^{a+b}}{[a]![b]!} \\*
  &\hspace*{\parindent} q^{\frac{a(a-1)+b(b-1)}{2} + 2(b+g)(e-h) + 2bg} K^g F^b E^a \otimes K^h E^b F^a \\
  &= \frac{1}{r'} \sum_{a,b,e,g=0}^{r'-1} \frac{\{ 1 \}^{a+b}}{[a]![b]!} \\*
  &\hspace*{\parindent} q^{\frac{a(a-1)+b(b-1)}{2} - 2(b+g)(b+e) + 2bg} K^g F^b E^a \otimes K^{b+2e} E^b F^a.
 \end{align*}

 If $r \equiv 1 \mod 2$ or $r \equiv 2 \mod 4$, then $r' \equiv 1 \mod 2$, so setting $g = -b+c$ and $e = \frac{r'+1}{2} d$ yields Equation~\eqref{E:M_1}.

 On the other hand, if $r \equiv 0 \mod 4$, then we have
 \begin{align*}
  M &= \frac{1}{r'} \sum_{a,b,g=0}^{r'-1} \sum_{e=0}^{r''-1} \frac{\{ 1 \}^{a+b}}{[a]![b]!} \left( 1 + (-1)^{b+g} \right) \\*
  &\hspace*{\parindent} q^{\frac{a(a-1)+b(b-1)}{2} - 2(b+g)(b+e) + 2bg} K^g F^b E^a \otimes K^{b+2e} E^b F^a \\*
  &= \frac{2}{r'} \sum_{a,b=0}^{r'-1} \sum_{c,e=0}^{r''-1} \frac{\{ 1 \}^{a+b}}{[a]![b]!} \\*
  &\hspace*{\parindent} q^{\frac{a(a-1)+b(b-1)}{2} - 2b^2 - 4ce} K^{-b+2c} F^b E^a \otimes K^{b+2e} E^b F^a. 
 \end{align*}
 Therefore, setting $e = d$ yields Equation~\eqref{E:M_2}.

 Similarly, for what concerns $M^{-1}$, we have
 \begin{align*}
  M^{-1} 
  &= \frac{1}{r'^2} \sum_{a,b,c,d,e,f=0}^{r'-1} \frac{\{ -1 \}^{a+b}}{[a]![b]!} \\*
  &\hspace*{\parindent} q^{-\frac{a(a-1)+b(b-1)}{2} + 2cd + 2ef}  E^a K^c F^b K^f \otimes F^a K^d E^b K^e \\
  &= \frac{1}{r'^2} \sum_{a,b,c,d,e,f=0}^{r'-1} \frac{\{ -1 \}^{a+b}}{[a]![b]!} \\*
  &\hspace*{\parindent} q^{-\frac{a(a-1)+b(b-1)}{2} + 2cd + 2ef - 2b(c-d)} E^a F^b K^{c+f} \otimes F^a E^b K^{d+e} \\
  &= \frac{1}{r'^2} \sum_{a,b,e,f,g,h=0}^{r'-1} \frac{\{ -1 \}^{a+b}}{[a]![b]!} \\*
  &\hspace*{\parindent} q^{-\frac{a(a-1)+b(b-1)}{2} + 2(f-g)(e-h) + 2ef + 2b(f-g-e+h)} E^a F^b K^g \otimes F^a E^b K^h \\
  &= \frac{1}{r'^2} \sum_{a,b,e,g,h=0}^{r'-1} \left( \sum_{f=0}^{r'-1} e^{\frac{2 \pi i}{r'} 2\frac{r'}{r}f(b+2e-h)} \right) \frac{\{ -1 \}^{a+b}}{[a]![b]!} \\*
  &\hspace*{\parindent} q^{-\frac{a(a-1)+b(b-1)}{2} - 2(b+g)(e-h) - 2bg} E^a F^b K^g \otimes F^a E^b K^h \\
  &= \frac{1}{r'} \sum_{a,b,e,g=0}^{r'-1} \frac{\{ -1 \}^{a+b}}{[a]![b]!} \\*
  &\hspace*{\parindent} q^{-\frac{a(a-1)+b(b-1)}{2} + 2(b+g)(b+e) - 2bg} E^a F^b K^g \otimes F^a E^b K^{b+2e}.
 \end{align*}

 If $r \equiv 1 \mod 2$ or $r \equiv 2 \mod 4$, then $r' \equiv 1 \mod 2$, so setting $g = -b+c$ and $e = \frac{r'+1}{2} d$ yields Equation~\eqref{E:M_inverse_1}.

 On the other hand, if $r \equiv 0 \mod 4$, then we have
 \begin{align*}
  M^{-1} &= \frac{1}{r'} \sum_{a,b,g=0}^{r'-1} \sum_{e=0}^{r''-1} \frac{\{ -1 \}^{a+b}}{[a]![b]!} \left( 1 + (-1)^{b+g} \right) \\*
  &\hspace*{\parindent} q^{-\frac{a(a-1)+b(b-1)}{2} + 2(b+g)(b+e) - 2bg} E^a F^b K^g \otimes F^a E^b K^{b+2e} \\*
  &= \frac{2}{r'} \sum_{a,b=0}^{r'-1} \sum_{c,e=0}^{r''-1} \frac{\{ -1 \}^{a+b}}{[a]![b]!} \\*
  &\hspace*{\parindent} q^{-\frac{a(a-1)+b(b-1)}{2} + 2b^2 + 4ce} E^a F^b K^{-b+2c} \otimes F^a E^b K^{b+2e}.
 \end{align*}
 Therefore, setting $e = d$ yields Equation~\eqref{E:M_inverse_2}.
\end{proof}

\section{Instability of 2-exotic pairs}

In this appendix we give a short explicit proof of the instability of (potential) $2$-exotic phenomena under boundary connected sum with $S^2 \times D^2$. We recall that the operation of boundary connected sum is denoted ${} \bcs {}$.

\begin{lemma}\label{L:instability}
 If two $4$-di\-men\-sion\-al $2$-handlebodies $X$ and $Y$ are diffeomorphic, then there exists an integer $k \geqs 0$ such that $X \bcs {} (S^2 \times D^2)^{{} \bcs k}$ and $Y \bcs {} (S^2 \times D^2)^{{} \bcs k}$ are $2$-e\-quiv\-a\-lent.
\end{lemma}

\begin{proof}
 Every diffeomorphism $f : X \to Y$ is implemented by a sequence of $2$-handle slides and of creation/removal of canceling pairs of handles of index $1/2$ and $2/3$. Suppose that $f : X \to Y$ requires $k$ creations and $k$ removals of canceling $2/3$-handle pairs. Then, up to reordering all these handle moves, we can decompose $f$ as $f''' \circ f'' \circ f'$, where: 
 \begin{enumerate}
  \item $f' : X \to X'$ creates all $k$ canceling $2/3$-handle pairs;
  \item $f'' : X' \to Y'$ is a $2$-deformation;
  \item $f''' : Y' \to Y$ removes all $k$ canceling $2/3$-handle pairs.
 \end{enumerate}
 Up to $2$-handle slides, the attaching sphere of a $2$-han\-dle that admits a canceling $3$-han\-dle is isotopic to a $0$-framed unknot, see \cite[Figure~5.15]{GS99}. Therefore, the $4$-di\-men\-sion\-al $2$-handlebody obtained from $X'$ by removing all $3$-han\-dles is $2$-equivalent to $X \bcs {} (S^2 \times D^2)^{{} \bcs k}$, and the $4$-di\-men\-sion\-al $2$-handlebody obtained from $Y'$ by removing all $3$-han\-dles is $2$-equivalent to $Y \bcs {} (S^2 \times D^2)^{{} \bcs k}$. Then $f''$ induces by restriction a $2$-deformation 
 \[
  \tilde{f} : X \bcs {} (S^2 \times D^2)^{{} \bcs k} \to Y \bcs {} (S^2 \times D^2)^{{} \bcs k}. \qedhere
 \]
\end{proof}

\end{document}